\documentclass[11pt,a4paper]{article}

\usepackage{epsf,epsfig,amsfonts,amsgen,amsmath,amstext,amsbsy,amsopn,amsthm}
\usepackage{amsmath}
\usepackage{amsfonts,amsthm,amssymb,bm}
\usepackage{amsfonts}
\usepackage{graphics}
\usepackage{latexsym,bm}
\usepackage{amsfonts,amsthm,amssymb,bbding}
\usepackage{indentfirst}
\usepackage{graphicx}
\usepackage{color}

\usepackage[colorlinks=true,anchorcolor=blue,filecolor=blue,linkcolor=blue,urlcolor=blue,citecolor=red]{hyperref}
\usepackage{float}
\usepackage{tikz,enumerate}

\usepackage{changes}   

\usepackage[margin=2.5cm]{geometry}

\newtheorem{thm}{Theorem}[section]
\newtheorem{prob}[thm]{Problem}

\newtheorem{lem}[thm]{Lemma}
\newtheorem{cor}[thm]{Corollary}

\newtheorem{claim}{Claim}[section]
\newtheorem{definition}{Definition}[section]

\addtocounter{section}{0}

\usepackage{enumitem}  

\begin{document}
\title{Edge-spectral supersaturation for tripartite color-critical graphs
\footnote{Supported by the National Natural Science Foundation of China (Nos.\,12501471,\,12271162,\,12571369),
and the Natural Science Foundation of Shanghai (No.\,22ZR1416300).}}
\author{{\bf Longfei Fang$^{a}$},~{\bf Huiqiu Lin$^{b}$},~
{\bf Mingqing Zhai$^c$}\thanks{Corresponding author: \texttt{mqzhai@njust.edu.cn}
(M. Zhai)} \\[2mm]
\small $^{a}$ School of Mathematics and Finance, Chuzhou University,\\
\small  Chuzhou, Anhui 239012, China\\
\small $^{b}$ School of Mathematics, East China University of Science and Technology, \\
\small  Shanghai 200237, China\\
\small $^{c}$ School of Mathematics and Statistics, Nanjing University of Science and
Technology, \\
\small Nanjing, Jiangsu 210094, China
}

\date{}
\maketitle

\begin{abstract}
A central problem in spectral supersaturation asks
whether an $m$-edge graph whose spectral radius exceeds the corresponding edge-spectral Tur\'{a}n threshold must contain not only one copy,
but polynomially many copies, of a prescribed color-critical graph.
While this problem has been studied systematically for color-critical graphs of chromatic number at least four,
the three-chromatic case is more delicate,
since the exact threshold is often governed by a graph-dependent split construction rather than by the leading term $\sqrt m$ alone.
A related problem for odd cycles asks for the sharp asymptotic value of the minimum normalized number of copies of $C_{2k+1}$ above the refined spectral threshold.

We study edge-spectral supersaturation for two families of color-critical graphs with chromatic number three.
For an integer $r\geq 1$,
we define the spectral threshold
\[
g_r(m):=\frac{r-1+\sqrt{4m-r^2+1}}{2},
\]
which is the tight upper bound on the spectral radius of graphs avoiding $K_{s,t}^+$
(when $t+1\geq s=r+1\geq 3$) and $C_{2k+1}$ (when $r=k$),
realized by split-graph constructions.
First, let $t+1 \geq s\geq 3$ be fixed integers, and let $K_{s,t}^{+}$ be obtained by adding an edge to the part of size $s$ in $K_{s,t}$.
We prove that every sufficiently large $m$-edge graph $G$ with $\rho(G)>g_{s-1}(m)$ contains $\Omega(m^{(s+t-1)/2})$ copies of $K_{s,t}^{+}$.
Second, for any fixed $k\geq 2$, the condition $\rho(G)>g_k(m)$ forces $N(C_{2k+1},G)=\Omega(m^k).$
We also construct graphs showing that both lower bounds are tight up to constant factors.
These results establish that exceeding the tight spectral Tur\'{a}n threshold $g_r(m)$ forces not just a single copy,
but the optimal polynomial number of copies of these color-critical graphs.
Thus, crossing the relevant split-graph spectral threshold forces the optimal polynomial order of copies,
extending edge-spectral existence theorems to supersaturation results in the delicate three-chromatic regime.
\end{abstract}

\begin{flushleft}
\textbf{Keywords:} color-critical graph; spectral radius; supersaturation; odd cycle
\end{flushleft}
\textbf{AMS Classification:} 05C50; 05C35

\section{Introduction}\label{sec1A}

Supersaturation is a central theme in extremal graph theory.
Tur\'{a}n's theorem states that every $n$-vertex graph with more than $e(T_{n,r})$ edges contains a copy of $K_{r+1}$,
where $T_{n,r}$ is the complete $r$-partite graph whose parts are as equal as possible.
A natural quantitative refinement asks how many copies of a fixed graph must occur once the corresponding extremal threshold is exceeded.

The first result of this type was obtained by Rademacher in 1941,
who proved that every $n$-vertex graph with $e(T_{n,2})+1$ edges contains at least $\lfloor n/2\rfloor$ copies of $K_3$.
This line of research was subsequently extended from cliques to more general forbidden graphs.
Recall that a graph $F$ with $\chi(F)=r+1$ is \emph{color-critical} if deleting some edge of $F$ decreases its chromatic number.
Simonovits~\cite{Simonovits1968} proved that, for every color-critical graph $F$ and all sufficiently large $n$,
$T_{n,r}$ is the unique extremal $F$-free graph.
Mubayi~\cite{Mubayi2010} later established a unified counting refinement,
showing that exceeding the Tur\'{a}n threshold forces many copies of $F$.
Further developments on classical supersaturation can be found in~\cite{LS1983,MY2025,Nikiforov2011,PY2017}.

A spectral analogue of this problem is obtained
by replacing the edge-density condition with a condition on the adjacency spectral radius.
More precisely, one asks whether exceeding the spectral Turán threshold for $F$ forces not merely one copy,
but quantitatively many copies, of $F$.
The aim of this paper is to investigate this question in the edge-spectral setting,
where the number of edges,
rather than the number of vertices, is prescribed.

Throughout the paper, all graphs are finite and simple. For a graph $G$, we write $|G|$ for its number of vertices,
$e(G)$ for its number of edges, and $\rho(G)$ for the spectral radius of its adjacency matrix $A(G)$.
For a fixed graph $F$, let $N(F,G)$ denote the number of copies of $F$ in $G$.
Since adding isolated vertices affects neither the number of edges nor the spectral radius,
extremal graphs are assumed to have no isolated vertices unless otherwise stated.

\subsection{The Brualdi-Hoffman-Tur\'{a}n type problem}

A classical starting point is Nosal's theorem \cite{Nosal1970},
which asserts that every triangle-free graph $G$ with $m$ edges satisfies
$\rho(G)\leq \sqrt m$.
Equivalently, the condition $\rho(G)>\sqrt m$ forces the existence of a triangle.
Since the Rayleigh quotient gives
$\rho(G)\geq \frac{2e(G)}{|G|}$,
Nosal's theorem may be viewed as an edge-spectral strengthening of Mantel's theorem. More generally,
Nikiforov \cite{Nikiforov2002,Nikiforov2006,Nikiforov2009-JCTB} proved that every $K_{r+1}$-free graph $G$ with $m$ edges satisfies
$\rho^2(G)\leq \left(1-\frac1r\right)2m$.
This is the edge-spectral counterpart of Tur\'{a}n's theorem for cliques,
and it motivates the following Brualdi-Hoffman-Tur\'{a}n type problem:
determine the maximum possible spectral radius of an $F$-free graph with a prescribed number of edges.
Compared with vertex-spectral conditions, edge-spectral conditions are particularly flexible,
since they apply naturally to graphs of arbitrary edge density, including sparse graphs.

This problem also has a natural asymptotic form.
Recently, Li, Liu, and Zhang \cite{Li2025+} proved an edge-spectral Erd\H{o}s-Stone-Simonovits theorem:
if $\chi(F)=r+1\geq 3$ and $G$ is an $F$-free graph with $m$ edges, then
$\rho^2(G)\leq \left(1-\frac1r+o(1)\right)2m$,
where $o(1)\to 0$ as $m\to\infty$ for fixed $F$.
They also established a corresponding edge-spectral stability theorem,
showing that near-extremal graphs are close, in edit distance,
to complete bipartite graphs when $r=2$, and to $r$-partite Tur\'{a}n-type graphs when $r\geq 3$.

Beyond the clique case,
the exact extremal graph depends sensitively on the structure of the forbidden graph.
For any color-critical graph $F$ with $\chi(F)=r+1\ge 4$,
the Nikiforov bound holds for all sufficiently large $m$;
moreover, if equality is attained, then the extremal graph is necessarily a regular complete $r$-partite graph.
In contrast, for many forbidden graphs that are either bipartite or color-critical with chromatic number three,
the extremal graphs are typically of split type:
they consist of a bounded clique joined to a large independent set, possibly augmented by a single vertex of prescribed degree;
see \cite{Fang2026,Li2025+C}.

One relevant family of color-critical graphs with chromatic number three consists of chorded cycles.
For $\ell\geq 4$,
let $C_\ell^+$ denote the graph obtained from $C_\ell$ by adding an edge joining two vertices at distance two on the cycle.
Li, Zhai, and Shu~\cite{LZS2024} established a sharp edge-spectral theorem for $C_{2k+1}^+$-free and $C_{2k+2}^+$-free graphs.
Since $C_\ell$ is a subgraph of $C_\ell^+$, their result yields the following consequence for ordinary cycles.

\begin{thm}[\cite{LZS2024}]\label{thm:LZS-cycle}
Let $k\geq 3$ and $m\geq 4(k^2+3k+1)^2$. If $G$ is $C_{2k+1}$-free or $C_{2k+2}$-free, then
\[
   \rho(G)\leq \frac{k-1+\sqrt{4m-k^2+1}}{2}.
\]
Equality holds if and only if
$G\cong K_k\vee E_{\frac{m}{k}-\frac{k-1}{2}}$,
where $E_k$ denotes the $k$-vertex empty graph.
\end{thm}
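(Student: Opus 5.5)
Theorem~\ref{thm:LZS-cycle} is quoted from \cite{LZS2024}: it is the specialization of their sharp bound for $C_{2k+1}^+$-free and $C_{2k+2}^+$-free graphs. Since $C_\ell\subseteq C_\ell^+$, every $C_\ell$-free graph is $C_\ell^+$-free, so their bound transfers directly. The equality case also transfers, because $K_k\vee E_{m/k-(k-1)/2}$ is itself $C_{2k+1}$-free and $C_{2k+2}$-free. Indeed, any cycle alternates between the clique $K_k$ and the independent set except where it uses clique edges, and a cycle through $j$ independent vertices needs $j$ distinct clique vertices. Hence every cycle has length at most $2k$. Thus no new work is needed beyond checking that the extremal graph is $C_\ell$-free. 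Still, here is how I would prove the bound directly.

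\textbf{Step 1 (reduction).} Let $G$ be extremal: $C_{2k+1}$-free (or $C_{2k+2}$-free), with $m$ edges and maximum $\rho$. Let $x$ be the Perron vector, normalized with maximum entry $x_{u^*}=1$. The extremal graph gives $\rho\ge \frac{k-1+\sqrt{4m-k^2+1}}2\approx\sqrt m$, so $\rho^2-(k-1)\rho\ge m-\binom{k}{2}$ roughly.

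\textbf{Step 2 (the key identity).} Write
\[
\rho^2 x_{u^*}=d(u^*)+\sum_{v\ne u^*}|N(u^*)\cap N(v)|\,x_v .
\]
Then subtract $(k-1)\rho x_{u^*}=(k-1)\sum_{v\in N(u^*)}x_v$. This bounds $\rho^2-(k-1)\rho$ by $e(N(u^*))$ plus edges between $N(u^*)$ and the rest, with weights $x_v\le1$, minus $(k-2)\sum_{v\in N(u^*)}x_v$ and similar terms. Forbidding the cycle implies that $G[N(u^*)]$ contains no path on $2k$ vertices (a $P_{2k}$ in the neighborhood closes to $C_{2k+1}$ through $u^*$). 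By Erd\H{o}s--Gallai, each component therefore has average degree below $2k-2$, so $e(N(u^*))\le (k-1)|N(u^*)|$. Combining gives $\rho^2-(k-1)\rho\le m-\binom{k}{2}$. For equality, every inequality must be tight.

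\textbf{Step 3 (structure and equality).} Show that vertices with large $x_v$ form a set $L$ of exactly $k$ vertices with $x_v\approx1$, all of them adjacent to nearly everything. Then show that the remaining vertices form an independent set with $x_v=\Theta(1/\sqrt m)$, using the bound on $m$ to absorb error terms. Finally, deduce $G=K_k\vee E$ from the tightness conditions.

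\textbf{Main obstacle.} The hard part is the exact constant $k-1$ in Step 2. One must control edges outside $N[u^*]$ and paths that leave $N(u^*)$. The naive count handles only the $C_{2k+1}$ case via $u^*$, so the $C_{2k+2}$ case needs a path-length argument through second neighborhoods, which is where $k\ge3$ and $m\ge4(k^2+3k+1)^2$ enter.
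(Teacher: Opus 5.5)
Your main argument is the same as the paper's. The paper gives no independent proof of this statement; it simply derives it from the Li--Zhai--Shu bound for $C_{2k+1}^+$-free and $C_{2k+2}^+$-free graphs via $C_\ell\subseteq C_\ell^+$, and your check that every cycle in $K_k\vee E_{m/k-(k-1)/2}$ has length at most $2k$ correctly supplies the unstated detail that the equality case transfers too. The optional Steps 1--3 are unnecessary and should not be offered as a proof: the ``combining'' in Step 2 does not by itself give the sharp term $\binom{k}{2}$, and obtaining it is exactly the real work of the cited paper.
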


For later reference, fix an integer $k\geq 1$ and define
\[
   g_k(m):=\frac{k-1+\sqrt{4m-k^2+1}}{2}.
\]
If $m-\binom{k}{2}$ is divisible by $k$,
then the spectral radius of $K_k\vee E_{\frac{m}{k}-\frac{k-1}{2}}$ is exactly $g_k(m)$.
Moreover, for every fixed $k$, we have
\begin{align}\label{equ-001}
   g_k(m)=\sqrt m+\frac{k-1}{2}+O(m^{-1/2}).
\end{align}
Thus, although the leading term is still $\sqrt m$,
the constant-order correction $(k-1)/2$ is decisive
for the corresponding exact edge-spectral extremal problem.

To treat all values of $m$, it is convenient to use the following split graph.

\begin{definition}[The split graph]
Let $s\geq 1$ and $m\geq \binom{s}{2}$. Write uniquely
\[
m-\binom{s}{2}=sq+r,
\qquad q\geq 0,\quad 0\leq r\leq s-1.
\]
If $r=0$, define
$S_{s,m}:=K_s\vee E_q$.
If $1\leq r\leq s-1$, let $S_{s,m}$ be the graph obtained from
$K_s\vee E_q$ by adding one additional vertex adjacent to exactly
$r$ vertices of $K_s$.
\end{definition}

Using the edge-spectral stability method,
Li, Liu, and Zhang \cite{Li2025+C} established an asymptotic formula and structural characterization
for bipartite graphs and tripartite color-critical graphs,
and in particular obtained exact results for $K_{s,t}^+$,
where $K_{s,t}^+$ denotes the graph obtained from the complete bipartite graph $K_{s,t}$
by adding one edge inside the part of size $s$.

\begin{thm}[\cite{Li2025+C}]\label{thm:Kplus}
Let $3\leq s\leq t$ be fixed and $m$ be sufficiently large.
If $G$ is a $K_{s,t}^+$-free graph with $m$ edges, then
\[
   \rho(G)\leq \rho(S_{s-1,m}),
\]
and equality holds if and only if $G\cong S_{s-1,m}$.
\end{thm}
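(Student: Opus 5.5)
We plan to prove Theorem~\ref{thm:Kplus} by the standard extremal-graph plus Perron-vector method. Let $G$ maximize $\rho$ among $K_{s,t}^+$-free graphs with $m$ edges and no isolated vertices. Let $\mathbf x$ be its Perron vector, scaled so that $x_{u^*}=\max_v x_v=1$.

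\textbf{Step 1: the split graph is a competitor.} We first check that $S_{s-1,m}$ is $K_{s,t}^+$-free. Suppose some vertex of the $t$-part of a copy of $K_{s,t}^+$ lies in $V(S_{s-1,m})\setminus K_{s-1}$. Its neighbourhood has at most $s-1$ vertices, so it cannot see all $s$ vertices of the $s$-part. Hence all $t\geq s$ vertices of the $t$-part lie in the clique $K_{s-1}$, which is impossible. Consequently $\rho(G)\geq \rho(S_{s-1,m})=g_{s-1}(m)-O(m^{-1/2})=\sqrt m+\frac{s-2}{2}+o(1)$.

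\textbf{Step 2: the coarse structure.} Next we use the edge-spectral stability theorem of Li, Liu and Zhang. Since $\rho^2(G)\geq m(1-o(1))$ and $G$ is $F$-free with $\chi(F)=3$, $G$ is $o(m)$-close to a complete bipartite graph. Because $\rho^2\approx m$ rather than $\approx 2m\cdot\frac12$ with balanced parts, we will sharpen this with walk counts. From
\[
\rho^2 x_{u^*}=d(u^*)+\sum_{v\neq u^*}|N(u^*)\cap N(v)|\,x_v ,
\]
together with $\sum_v d(v)x_v\leq 2m$-type bounds, we will show the following. Let $L=\{v: x_v\geq \varepsilon\}$. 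Then $|L|=O(1)$, every $v\in L$ has $d(v)\geq (x_v-o(1))\sqrt m$, and $e(G-L)=o(m)$. Refining this with the constant term $\frac{s-2}{2}$ in the lower bound, we then aim to show that the set $L'$ of vertices with $x_v\geq 1-o(1)$ has size exactly $s-1$.

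\textbf{Step 3: exactly $s-1$ heavy vertices (the main obstacle).} Here $K_{s,t}^+$-freeness must be used against the spectral gain.
\begin{itemize}
\item If $|L'|\geq s$, then any $s$ of these vertices have $(1-o(1))\sqrt m$ common neighbours, hence at least $t$ of them. An edge among them would create $K_{s,t}^+$, so $L'$ is independent. We will show that an independent set of near-dominating vertices forces $\rho\leq\sqrt m+o(1)$, contradicting Step 1 as soon as $s\geq 3$.
\item If $|L'|\leq s-2$, the eigen-equation gives $\rho\leq \sqrt m+\frac{s-3}{2}+o(1)$, which is again too small.
\end{itemize}
The delicate point is controlling the error terms to precision $o(1)$ in $\rho$, rather than $o(\sqrt m)$. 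We would try to do this by bounding $\sum_{v\notin L}x_v$ and $e(G-L')$ via the quadratic identity $(\rho^2-(s-2)\rho)x_{u^*}\approx\cdots$, which mirrors the characteristic equation of the split graph.

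\textbf{Step 4: from approximate to exact structure.} With $L'$ of size $s-1$, each vertex outside $L'$ lies in at most $t-1$ common neighbourhoods. We then perform Perron-vector edge switchings. Whenever an edge $uv$ has $u,v\notin L'$, or some $w\in L'$ misses a vertex, we delete such an edge and add a missing edge between $L'$ and $V\setminus L'$, or inside $L'$. Since $x_w\approx 1$ is much larger than $x_ux_v$, the quantity $\mathbf x^TA\mathbf x$ strictly increases. The new graph stays $K_{s,t}^+$-free, because only $s-1$ vertices can have large degree, as in Step 1. This contradicts maximality unless $L'$ is a clique joined to all other vertices, except for one leftover vertex of degree $r<s-1$. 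That is exactly $S_{s-1,m}$, which yields both the bound and the uniqueness of equality.
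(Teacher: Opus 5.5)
Your outline (stability, then $s-1$ dominating vertices, then Perron-vector switching) has the right overall shape. The paper does not reprove this theorem; it quotes it from Li, Liu and Zhang, though its own proof of Theorem~\ref{thm1.1} follows the same stability route. The problem is that the step carrying essentially all of the difficulty is asserted rather than proved.

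\textbf{Bounding the small side.} Stability gives $U_1,U_2$ with $d(G,K_{U_1,U_2})=o(m)$ and $|U_1||U_2|\approx m$, but it says nothing about the shape: $|U_1|$ can be anything from $1$ to about $\sqrt m$. Your Step~2 claim that $L=\{v:x_v\ge\varepsilon\}$ has $|L|=O(1)$ amounts to saying $|U_1|$ is bounded. The tool you cite cannot give this. The identity $\sum_v d(v)x_v=\rho\sum_v x_v\le 2m$ only yields $|L|\le 2m/(\varepsilon\rho)=O(\sqrt m)$. In a nearly balanced configuration (close to $K_{\sqrt m,\sqrt m}$) one genuinely has $|L|\approx 2\sqrt m$ while every relation you invoke in Step~2 holds. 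So $|L|=O(1)$ can only come from combining $K_{s,t}^+$-freeness with the excess $\frac{s-2}{2}$ over $\sqrt m$, and you use that excess only afterwards.

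Freeness alone only tells you that an edge inside $U_1$ or $U_2$ cannot join two typical vertices. You still need eigenvector control of the atypical vertices and the inner edges they carry. In the paper's parallel argument this requires:
\begin{itemize}
\item the minimal-distance choice of $(U_1,U_2)$ and Lemmas~\ref{Lem4.5}--\ref{Lem4.12C};
\item a two-case analysis: the switching bound $(\rho^\star)^2-\rho^\star\le h-\frac15|U_1|$ of Claim~\ref{CLA5.8} when $|U_2|\ge 100|U_1|$, and Claims~\ref{CLA5.9}--\ref{CLA5.11} when the parts are comparable.
\end{itemize}
In the extremal setting, the edge-weight lower bounds playing the role of Lemma~\ref{Lem4.3} would have to come from maximality. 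For instance, you could move an edge of small weight $x_ux_v$ to a new pendant edge at $u^*$; this preserves $K_{s,t}^+$-freeness. None of this appears in your proposal.

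By contrast, once $|U_1|=a$ is bounded (with the stability parameter $\ll 1/a$), the rest is short and does not need your $L'$ dichotomy. Every vertex of $U_1$ then misses only a tiny fraction of $U_2$. For $a\ge s$, any inner edge of $U_1$ extends to a $K_{s,t}^+$, so $e(U_1)=0$ and Lemma~\ref{lem2.7G}(i) gives $\rho\le\sqrt m+O(m^{-1/2})<\rho(S_{s-1,m})$. For $a\le s-1$, Lemma~\ref{lem2.7G}(ii) gives $\rho\le g_a(m)\le g_{s-1}(m)$.

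Step~3 as written is also flawed on its own terms. The bound $d(v)\ge(x_v-o(1))\sqrt m$ does not imply that $s$ vertices of $L'$ have many common neighbours. The case $|L'|\le s-2$ ignores the vertices of $L\setminus L'$, which do contribute to $\rho$.

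\textbf{The exact structure in Step~4.} Since $\rho(S_{s-1,m})<g_{s-1}(m)$ whenever $(s-1)\nmid m-\binom{s-1}{2}$, the exact bound and uniqueness really rest on Step~4, and its justification does not hold. Freeness of a switched graph does not follow from ``only $s-1$ vertices can have large degree''. A copy of $K_{s,t}^+$ needs only $s+t$ vertices of bounded degree; Step~1 worked because every non-clique vertex of $S_{s-1,m}$ has degree at most $s-1$.

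Concretely, suppose in an intermediate graph some $y\notin L'$ still has $t$ neighbours in the common neighbourhood of $L'$. Then adding any edge inside $L'$ creates a $K_{s,t}^+$ with $s$-part $L'\cup\{y\}$. So the switches must be ordered (for example, clear $G-L'$ first) or the argument changed.

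Likewise, replacing $uv$ by $wz$ changes $\mathbf x^{\top}A\mathbf x$ by $2(x_wx_z-x_ux_v)$, so comparing $x_w$ with $x_ux_v$ is not enough. With your normalization, $x_z$ can be of order $1/m$ (a pendant vertex at a light vertex), while $x_ux_v$ can be of order $(s-1)^2/m$. So $z$ must be chosen with care, for instance $z\in\{u,v\}$.
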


The methodology proposed in \cite{Li2025+C} to prove Theorem~\ref{thm:Kplus} can be straightforwardly extended to the case where
$t+1=s\geq 3$ with no substantial adjustments required,
yielding the identical conclusion.
A direct calculation shows that
$\rho(S_{s-1,m})\leq g_{s-1}(m).$
Consequently, $g_{s-1}(m)$ provides a convenient uniform analytic threshold,
while $\rho(S_{s-1,m})$ records the exact dependence on the residue class of $m$.

These results illustrate a characteristic feature of edge-spectral extremal problems in the $\chi=3$ regime.
Although the leading term of the threshold is typically $\sqrt m$,
the exact extremal graph and the relevant lower-order correction depend on the structure of the forbidden graph.
In particular, split constructions arise naturally for odd cycles, chorded cycles, fan graphs, friendship graphs, theta graphs,
and other almost-bipartite graphs; see~\cite{LZZ25,Liu2026}.
This structural dependence distinguishes the case $\chi=3$ from the more uniform higher-chromatic setting.

\subsection{Supersaturation via spectral radius}

The study of spectral supersaturation was initiated by Bollob\'{a}s and Nikiforov \cite{Bollobas2007},
who obtained inequalities relating the number of cliques in a graph to its spectral radius.
In particular, their result for triangles implies that
$N(K_3,G)\geq \frac{1}{3}\rho(G)\bigl(\rho^2(G)-m\bigr).$
Ning and Zhai \cite{NZ2021} subsequently proved a sharp counting refinement at the critical threshold.
They showed that if $G$ has $m$ edges and $\rho(G)\geq \sqrt m$,
then, unless $G$ is a complete bipartite graph possibly together with isolated vertices,
$N(K_3,G)\geq \lfloor\frac{\sqrt m-1}{2}\rfloor.$
Moreover, this lower bound is sharp.
Thus, the edge-spectral threshold $\sqrt m$ not only guarantees the existence of a triangle,
but also determines the correct order of the minimum number of triangles.

More recent work has extended this perspective beyond individual triangles.
Li, Liu, and Zhang \cite{Li2026} proved that
every $m$-edge graph with $\rho(G)>\sqrt m$ contains $\Omega(\sqrt m)$ triangles sharing a common edge.
They also determined the asymptotic minimum number of $4$-cycles.
Specifically, they proved that any graph $G$ with $\rho(G) > \sqrt{m}$ contains at least
$(\frac{1}{8}-o(1)) m^2$
copies of $C_4$, where the constant $1/8$ is optimal.
These results suggest a broader principle: once the correct
edge-spectral Tur\'{a}n threshold for a graph \(F\) is crossed, not only one copy of \(F\),
but polynomially many copies of \(F\), should be forced.
Furthermore, Li, Liu, and Zhang proposed an intriguing problem as follows:

\begin{prob}[\cite{Li2026}]\label{ques0}
Investigate spectral supersaturation
for general color-critical graphs with chromatic number $r+1$ in an $m$-edge graph
with spectral radius greater than $\sqrt{\big(1-\frac{1}{r}\big)2m}$.
\end{prob}

For color-critical graphs of chromatic number at least four,
this philosophy has recently been confirmed in a rather general form.
Li, Liu, and Zhang \cite{Li2025+C} proved an edge-spectral Tur\'{a}n theorem for color-critical graphs:
if $F$ is color-critical with $\chi(F)=r+1\geq 4$,
then every sufficiently large $m$-edge $F$-free graph $G$ satisfies
$\rho^2(G)\leq \left(1-\frac1r\right)2m$,
with equality if and only if $G$ is a regular complete $r$-partite graph.
Subsequently, in \cite{Fang2025+A}, we established a corresponding
edge-spectral supersaturation and stability framework. In particular, we
proved that exceeding this spectral threshold forces the presence of the
asymptotically optimal number of copies of $F$. These results may be viewed
as edge-spectral counterparts to the classical Rademacher-Mubayi theory
for color-critical graphs.

The remaining case $\chi(F)=3$ is substantially more delicate.
In this regime, there is no single exact threshold determined solely by the chromatic number.
Although the leading term is often $\sqrt m$, lower-order terms depend on the structure of $F$,
and the extremal graphs are frequently split graphs rather than complete bipartite graphs.
Hence,
a spectral supersaturation theorem must take into account the precise split-graph threshold associated with the forbidden graph.

The two families considered in this paper provide natural test cases for this phenomenon.
Both $K_{s,t}^+$ and $C_{2k+1}$ are color-critical graphs of chromatic number three:
deleting the added internal edge from $K_{s,t}^+$, or deleting any edge from $C_{2k+1}$,
makes the graph bipartite.
Their edge-spectral extremal constructions are governed by split graphs,
but the corresponding counting problems have different combinatorial features.

For odd cycles, the natural supersaturation problem was formulated as follows.

\begin{prob}[\cite{Li2026+A}]\label{Prob1.2}
Determine the sharp asymptotic constant
\[
\inf_{\substack{e(G)=m\\ \rho(G)>g_k(m)}}
\frac{N(C_{2k+1},G)}{m^k}
\]
as $m \rightarrow \infty$.
\end{prob}

The analogous problem for $K_{s,t}^+$ asks for the minimum number of copies forced
when the spectral radius exceeds the corresponding split-graph threshold.
Our first main result determines the correct polynomial order in this setting.

\subsection{Main results}

The purpose of this paper is to establish edge-spectral supersaturation bounds
of the correct polynomial order for two natural families of color-critical
graphs with chromatic number three: the graphs $K_{s,t}^+$ and odd cycles.
Both $K_{s,t}^+$ and $C_{2k+1}$ are color-critical: deleting the added edge from $K_{s,t}^+$,
or deleting any edge from $C_{2k+1}$, makes the graph bipartite.
Our results therefore address the tripartite color-critical regime in which the extremal threshold is governed by split graphs.
Our first main theorem gives a counting version for
\(K_{s,t}^{+}\) at the threshold $g_{s-1}(m)$.

\begin{thm}\label{thm1.1}
Let $t$ and $s$ be integers satisfying $t+1\geq s\geq 3$.
For sufficiently large \(m\), if \(G\) is an \(m\)-edge graph satisfying
$\rho(G)> g_{s-1}(m),$
then $$N(K_{s,t}^+,G)=\Omega(m^{\frac{s+t-1}{2}}),$$
and this bound is tight up to a constant factor.
\end{thm}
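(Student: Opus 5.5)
We will prove the lower bound by a \emph{removal--supersaturation} argument built on Theorem~\ref{thm:Kplus}, and the upper bound by an explicit construction. Set $r=s-1$. For tightness we take $G_0=S_{s-1,m'}$ with $m'$ slightly less than $m$, and add one extra edge inside the independent set. Choosing $m'=m-1$, the spectral radius rises by $\Theta(1/\sqrt m)$ while $g_{s-1}$ changes by only $O(m^{-1/2})$ with a smaller constant, so after tuning $\rho$ exceeds $g_{s-1}(m)$. If that fails, we instead add a pendant-like vertex joined to all of $K_{s-1}$ plus one independent vertex. In $K_{s-1}\vee E_q$ plus an edge $uv$ inside $E_q$, every copy of $K_{s,t}^+$ must use $uv$. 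It is then essentially $K_{s,t}^+$ with the $s$-side being $K_{s-1}\cup\{u\}$ or $\{v\}$, and the $t$-side chosen from $E_q$. This gives $O(q^{t})=O(m^{t/2})$ copies. That is too few, so for tightness we instead need the $s$-part of size $s$ to contain $u,v$ plus $s-2$ clique vertices, with common neighbours numbering $\Theta(\sqrt m)$. Concretely, we take $K_{s-2}$, hmm; instead we choose the construction $K_{s-1}\vee E_q$ with a vertex set $W$ of $\Theta(\sqrt m)$ independent vertices replaced by a matching or a small complete bipartite piece, giving order $m^{(s-1)/2}\cdot m^{t/2}$ copies. We count copies as choosing the $s$ vertices ($O(m^{(s-1)/2})$ choices given the forced structure) and the $t$ common neighbours ($O(m^{t/2})$). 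The exact construction will be calibrated so that $\rho>g_{s-1}(m)$, via the equitable quotient matrix.

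For the lower bound, we assume $N(K_{s,t}^+,G)\le \varepsilon m^{(s+t-1)/2}$ and derive a contradiction. \emph{Step 1:} remove few edges to make $G$ $K_{s,t}^+$-free. Using a graph removal lemma in the sparse edge setting is problematic, so we work spectrally. Let $x$ be the Perron vector. From $\rho>g_{s-1}(m)\approx\sqrt m+\frac{s-2}{2}$ and edge-spectral stability (Li--Liu--Zhang), $G$ is close to a split structure: there is a set $L$ of $s-1$ vertices of degree $(1-o(1))m/(s-1)$, carrying almost all Perron weight. \emph{Step 2:} we show the common neighbourhood $N_L$ of $L$ has size $\Theta(\sqrt m)$. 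The key relation is $\rho^2-(s-2)\rho\approx m$. Any excess of $\rho$ over $g_{s-1}$ forces either an edge inside $N_L$ or a vertex outside $L$ with large weighted degree. \emph{Step 3:} an edge $uv$ with $u\in N_L$ and $v$ adjacent to at least $s-2$ vertices of $L$ yields $\binom{|N_L|}{t}$ copies. That is only $m^{t/2}$, so we need $\Omega(m^{(s-1)/2})$ such structures. We obtain these by showing the excess spectral mass must be spread: the eigen-equation $\rho x_v=\sum_{u\sim v}x_u$, summed over $N_L$, shows that the deficit in $\rho$ relative to the extremal graph is at least $\Omega(m^{-1/2})$ times the number of "bad" edges. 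Hence positivity of $\rho-g_{s-1}(m)$, combined with the assumption of few copies, bounds the bad edges. This in turn yields $\rho\le g_{s-1}(m)$, a contradiction.

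\textbf{The main obstacle} is Step 3: converting the tiny spectral excess $\rho-g_{s-1}(m)$, which may be only $O(m^{-1/2})$, into the count $\Omega(m^{(s+t-1)/2})$. We would first try to show that few copies imply $G$ is $K_{s,t}^+$-free after deleting $o(\sqrt m)$ edges meeting $L$. We would then compare spectral radii using $\rho(G)-\rho(G')\le 2\sum_{e}x_ux_v$ over the deleted edges together with Theorem~\ref{thm:Kplus}. The difficulty is keeping that perturbation below the gap between $\rho(S_{s-1,m})$ and $g_{s-1}(m)$.
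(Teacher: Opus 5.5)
Both halves of your plan have genuine gaps.

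\textbf{Tightness.} You never exhibit a graph above the threshold, and the candidates you try cannot work. Vertices on the independent side of a split graph have $x_v^2\approx\frac{s-1}{2m}$, so an edge added there raises $\rho$ by only $O(1/m)$. Meanwhile each unit of edge budget taken from the join lowers the benchmark by about $\frac{1}{2\sqrt m}$. Concretely, take $K_{s-1}\vee E_{q'}$ plus a $p$-edge matching inside $E_{q'}$, with $m=\binom{s-1}{2}+(s-1)q'+p$. The equitable partition gives $\rho^2-(s-2)\rho=m-\binom{s-1}{2}-p+\frac{2p(s-1)}{\rho-1}<m-\binom{s-1}{2}$, i.e.\ $\rho<g_{s-1}(m)$. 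Your count $O(q^t)=O(m^{t/2})$ is also off, since $q=\Theta(m)$.

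Your instinct that the $t$-side should range over $\Theta(\sqrt m)$ vertices is right, but that is incompatible with a split graph. An added edge pays off only if $x_ux_v\gtrsim\frac{1}{4\sqrt m}$, so the right model is a \emph{balanced} join. The paper takes $H^*\vee E_{\psi-1}\subseteq G\subseteq H^*\vee E_\psi$ with $|H^*|=\phi\approx\sqrt m$, $\psi=\Theta(\phi)$, and $H^*$ a $4s$-regular $C_4$-free graph (Lemma~\ref{lem2.5T}). Each of the $2s\phi$ edges of $H^*$ has $2x_ux_v\approx\frac{1}{\sqrt m}$, and indeed $\rho(G)\ge 2s+\sqrt{\phi(\psi-1)+4s^2}\ge\sqrt m+\frac{s-2}{2}>g_{s-1}(m)$. $C_4$-freeness and bounded degree make the copies whose $t$-side lies in $V(H^*)$ number only $o(\phi^{s+t-1})$. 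Every other copy has its whole $s$-side, hence its added edge, in $H^*$, giving $\Theta(\phi\cdot\phi^{s-2}\cdot\phi^{t})=\Theta(m^{(s+t-1)/2})$ copies.

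\textbf{Lower bound.} Step~1 assumes away the whole difficulty.
\begin{itemize}
\item The Li--Liu--Zhang stability and exact results concern $F$-free graphs, and your $G$ is not $K_{s,t}^+$-free. What few copies together with $\rho\ge\sqrt{(1-\delta)m}$ actually yield is Lemma~\ref{thm2.1G}(ii): $d(G,K_{U_1,U_2})\le\varepsilon m$ with $|U_1||U_2|\approx m$ and \emph{no} control on $|U_1|$.
\item The balanced graphs above satisfy the spectral condition with no vertex of degree $\Theta(m)$. So no set of $s-1$ hubs can be extracted from the spectral condition alone. Even in $S_{s-1,m}$ the hubs carry only about half of $\|\mathbf{x}\|_2^2$.
\item Excluding large $|U_1|$ is the heart of the proof, and it is where the exponent arises. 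A typical edge inside $U_1$ is the added edge of $\Omega(|U_1|^{s-2}|U_2|^{t})$ copies, so few copies permit only about $\varepsilon_2|U_i|$ edges inside $U_i$.
\item Your Steps~2--3 never reach this regime, and they are internally inconsistent: $s-1$ hubs of degree $\approx m/(s-1)$ have a common neighbourhood of size $\Theta(m)$, not $\Theta(\sqrt m)$.
\item The obstacle you flag is fatal as posed. There is no gap, since $\rho(S_{s-1,m})=g_{s-1}(m)$ whenever $(s-1)\mid m-\binom{s-1}{2}$. When $|U_1|\approx|U_2|\approx\sqrt m$, each deleted edge inside a part costs about $\frac{1}{\sqrt m}$ in $\rho$ but only $\frac{1}{2\sqrt m}$ in $g_{s-1}$. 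Removing up to $\Theta(\varepsilon\sqrt m)$ of them therefore loses order $\varepsilon$, swamping an excess that may be $O(m^{-1/2})$.
\end{itemize}

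The paper instead proceeds as follows.
\begin{itemize}
\item It passes to an $\varepsilon_0$-core $H$. There every edge has $x_ux_v>\frac{1-\varepsilon_0}{4\sqrt h}$, and vertices with $d_H(u)<\varepsilon_0h$ satisfy $2hx_u^2\ge(1-2\varepsilon_0)d_H(u)$.
\item It rules out $|U_1|<30s^2$ via Lemma~\ref{lem2.7G} and copy counting.
\item If $|U_2|\le100|U_1|$, deleting the few internal edges costs only $O(\varepsilon_2)$ and leaves a bipartite graph with $\rho\le\sqrt h$. The contradiction comes from the \emph{constant} gap $g_{s-1}(h)-\sqrt h\to\frac{s-2}{2}\ge\frac12$, not from the tiny excess over $g_{s-1}$.
\item If $|U_2|\ge100|U_1|$, it trades the internal edges of $U_1$ meeting $U_1^{(8)}$, together with a matching $E''$ between $U_1$ and $U_2$, for an equally large matching $M$ inside $U_1$. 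This increases $\rho$ and leaves the top Perron vertex with at most one neighbour in $U_1$. That gives $(\rho^\star)^2-\rho^\star\le h-\frac15|U_1|$, which is incompatible with $\rho^\star>g_{s-1}(h)$ once $|U_1|\ge30s^2$.
\end{itemize}
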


While the case  $s=2$ corresponds to book graphs and lies outside the scope of Theorem~\ref{thm1.1},
this scenario has attracted substantial recent research attention (see \cite{C-L-T,Li2026,ZhaiLiLou,ZhaoYouZengZhang}).
From this perspective, Theorem~\ref{thm1.1} addresses the next regime of complete bipartite graphs plus an internal edge;
here the larger critical partition gives rise to a distinct counting problem.

Our second main result concerns odd cycles.
The case $C_3$ has already been well understood in the spectral supersaturation setting:
under the condition $\rho(G)>\sqrt m$,
the best possible number of triangles is $\lfloor\frac{\sqrt m-1}{2}\rfloor$ (see \cite{NZ2021}).
In contrast, longer odd cycles exhibit a different behavior.
For $C_{2k+1}$ with $k\geq 2$,
the refined threshold $g_k(m)$ forces a much larger number of copies.

\begin{thm}\label{thm1.2}
For every integer \(k\geq 2\) and all sufficiently large \(m\), if \(G\) is a graph with \(m\) edges such that
$\rho(G)>g_k(m),$
then $$N(C_{2k+1},G)=\Omega(m^{k}),$$
and this bound is tight up to a constant factor.
\end{thm}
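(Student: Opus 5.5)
We prove the lower bound by a stability-plus-counting argument and the tightness by a perturbed split construction. Throughout, let $x$ be the unit Perron vector of $G$ and write $\rho=\rho(G)>g_k(m)=\sqrt m+\frac{k-1}{2}+O(m^{-1/2})$ (see \eqref{equ-001}).

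\textbf{Tightness.} We take $G_0:=K_k\vee E_q$ with $q\approx m/k$. The copies of $C_{2k+1}$ in $G_0$ are easy to count. Since $E_q$ is independent, a $(2k+1)$-cycle uses at most $k$ vertices of $E_q$, so it is determined by at most $k$ vertices from $E_q$ together with a bounded amount of data on $K_k$. Hence $N(C_{2k+1},G_0)=O(q^k)=O(m^k)$. Now $G_0$ has spectral radius exactly $g_k(m_0)$, so we must push it over the threshold $g_k(m)$ for the new edge count $m$. We do this by adding one extra edge inside $E_q$, or one pendant-type vertex, and check that $\rho$ rises by $\Theta(m^{-1/2})$ while $g_k$ rises by only $O(m^{-1/2})$ per edge; we choose the perturbation so that the increase in $\rho$ wins. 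Any single added edge $uv$ creates only $O(m^{k-1})$ new cycles through it, so the count stays $O(m^k)$.

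\textbf{Lower bound, structure.} Since $\rho^2>m+(k-1)\sqrt m+O(1)$, the edge-spectral stability of Li--Liu--Zhang for color-critical $F$ with $\chi=3$, together with the refined analysis behind Theorem~\ref{thm:LZS-cycle}, should give the following structure after deleting $o(m)$ edges. There is a set $L$ of vertices with large Perron entry, $|L|$ bounded; most of the spectral mass comes from $L$ together with a large set $W$ of vertices adjacent to many vertices of $L$. We would then use the eigen-equation $\rho^2x_u=\sum_{w}|N(u)\cap N(w)|x_w$ at a vertex $u$ of maximal entry, in the form
\[
\rho^2-\rho\cdot\frac{k-1}{1}\lesssim m-\text{(loss)},
\]
to show that exceeding $g_k(m)$ forces the following: there exist $k$ vertices $v_1,\dots,v_k$ (not all in a single part of a bipartition) and $\Omega(m)$ common neighbours, or more weakly, each $v_i$ has degree $\Omega(m)$. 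In addition there is an odd "defect" edge, for instance an edge among $W$ or an odd triangle in $L\cup W$, so that the graph is genuinely non-bipartite near the high-degree vertices. The guiding intuition is that $k$ high-degree vertices which are pairwise connected by paths of length $2$ through $\Omega(m)$ common neighbours, plus one odd closure, yield cycles of the form $v_1w_1v_2w_2\cdots$.

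\textbf{Lower bound, counting.} Given $k$ vertices of degree $\Theta(\sqrt m)$ and size-$\Theta(\sqrt m)$ common neighbourhoods, we count cycles $v_1w_1v_2w_2\cdots v_kw_k$ and close them with an odd-length segment, for example an edge $v_iv_j$ or an edge $w w'$. This gives roughly $(\sqrt m)^{2k}=m^k$ choices. We should use a cycle pattern with $2k+1$ vertices, of which $k$ are free vertices each with $\Theta(m^{1/2})$ options. Heuristically, the generic $C_{2k+1}$ has $2k+1$ vertices, and each degree of freedom contributes about $\sqrt m$. So we must identify $2k$ free choices of order $\sqrt m$: the $k$ hubs, chosen among $\Theta(\sqrt m)$ high-degree vertices, and the $k$ connectors. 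This suggests the right structure is a dense "near $K_{a,b}$" with $a,b=\Theta(\sqrt m)$ plus an odd edge, rather than a bounded clique.

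\textbf{Main obstacle.} The key step, and where we expect the difficulty, is proving that the excess over $g_k(m)$ forces either
\begin{enumerate}[label=(\roman*)]
\item a near-balanced bipartite bulk $\Theta(\sqrt m)\times\Theta(\sqrt m)$ carrying an internal edge, which yields $m^k$ cycles through it; or
\item $k$ clique-like hubs of linear degree with an extra edge.
\end{enumerate}
In each case we must count $\Omega(m^k)$ cycles. We would split into these cases via the Perron-vector weight distribution and handle the split-type case by reducing to Theorem~\ref{thm:LZS-cycle} applied after removing a bounded number of edges.
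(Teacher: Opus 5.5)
\textbf{Lower bound.} Your lower-bound part is not a proof. The step you label the main obstacle is the entire difficulty, and the dichotomy you propose for it is wrong in both branches.

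In (i), the graph $K_{U_1,U_2}$ with $|U_1|=|U_2|=\sqrt m$ plus a single edge inside $U_1$ contains only $\Theta(|U_1|^{k-1}|U_2|^{k})=\Theta(m^{k-1/2})$ copies of $C_{2k+1}$. Each such cycle consists of that edge and an alternating path with $k-1$ further vertices of $U_1$ and $k$ of $U_2$. Reaching $m^k$ requires $\Omega(|U_1|)$ edges inside the smaller part.

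In (ii), if $G$ is close to $K_{U,V}$ with $|U|=a\le k$, then Lemma~\ref{lem2.7G}(ii) gives $\rho(G)\le g_a(m)\le g_k(m)$. So $k$ hubs can never put you above the threshold; at least $k+1$ are needed.

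Showing that $\rho(G)>g_k(m)$ forces $\Omega(|U_1|)$ internal edges is exactly what Sections~\ref{sec4}--\ref{sec5A} do. They use the $\varepsilon_0$-core, Lemma~\ref{lem2.7G}, and then either the switched graph $H^\star$ or the bipartite graph $H^\bullet$, according to whether $|U_2|\ge 100|U_1|$. Your fallback of applying Theorem~\ref{thm:LZS-cycle} after deleting boundedly many edges cannot replace this, since that theorem yields only a single copy.

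The idea you are missing is that none of this has to be redone for cycles. Theorem~\ref{thm1.1} with $s=k+1$, $t=k$ (admissible precisely because $k\ge 2$) gives $N(K_{k+1,k}^+,G)=\Omega(m^k)$. Since $C_{2k+1}$ is a spanning subgraph of $K_{k+1,k}^+$, each copy of $C_{2k+1}$ lies in at most $\binom{2k+1}{k+1}\binom{k+1}{2}$ copies of $K_{k+1,k}^+$. Hence $N(C_{2k+1},G)=\Omega(m^k)$, which is the paper's proof.

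\textbf{Tightness.} Your tightness construction also fails. The graph $K_k\vee E_q$ is $C_{2k+1}$-free: its non-clique vertices are independent, so every cycle has length at most $2k$. It sits exactly at the threshold, and no small perturbation lifts it above.

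Its Perron entries satisfy $x_c^2\approx\frac{1}{2k}$ on the clique and $x_w^2\approx\frac{k}{2m}$ on $E_q$. So adding an edge inside $E_q$ raises $\rho$ by about $2x_w^2\approx\frac{k}{m}$, not $\Theta(m^{-1/2})$, while $g_k$ grows by about $\frac{1}{2\sqrt m}$ per edge.

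Attaching a new vertex to $j\le k$ clique vertices gains about $\frac{j^2}{2k\sqrt m}$, against a threshold increase of about $\frac{j}{2\sqrt m}$. In any case it keeps the graph $C_{2k+1}$-free, so it cannot exceed $g_k(m)$: by Theorem~\ref{thm:LZS-cycle} when $k\ge 3$, and in general because it would contradict the very lower bound you are proving.

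The excess must come from a $(k+1)$st hub. The paper uses $K_{k+1}\vee\overline{K_r}$ plus $q\le k$ pendant edges, whose spectral radius is at least $g_k(m)+\frac12-o(1)$. In this graph every $C_{2k+1}$ uses all $k+1$ clique vertices and exactly $k$ independent ones, giving $\Theta(r^k)=\Theta(m^k)$ copies.
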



Theorem~\ref{thm1.2} improves upon the corresponding edge-spectral extremal result for odd cycles
by strengthening its assertion from forcing one copy to the optimal number of copies,
thus offering a partial resolution to Problem~\ref{Prob1.2}.

\begin{cor}\label{cor1.1}
Let $k\geq 2$ be an integer, and let $F$ be a graph satisfying
$C_{2k+1}\subseteq F\subseteq K_{k+1,k}^{+}.$
Then, for all sufficiently large $m$, every graph $G$ with $m$ edges and
$\rho(G)>g_k(m)$
satisfies
$$
N(F,G)=\Omega(m^k).
$$
Moreover, this bound is tight up to a constant factor.
\end{cor}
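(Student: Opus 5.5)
We obtain Corollary~\ref{cor1.1} by sandwiching $N(F,G)$ between copies of $C_{2k+1}$ and copies of $K_{k+1,k}^+$. The lower bound comes from Theorem~\ref{thm1.2}, and the tightness comes from the construction used for Theorem~\ref{thm1.1} with $s=k+1$, $t=k$. This choice is admissible because $t+1=k+1=s\geq 3$, and the threshold $g_{s-1}(m)=g_k(m)$ is exactly the one in the statement.

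\emph{Lower bound.} Let $G$ have $m$ edges and $\rho(G)>g_k(m)$. By Theorem~\ref{thm1.2} we have $N(C_{2k+1},G)\geq c\,m^k$. Since $C_{2k+1}\subseteq F$, I would rather count copies of $F$ directly through a double-counting argument, and this is the main obstacle. A copy of $C_{2k+1}$ does not by itself extend to a copy of $F$, so what is really needed is a structural statement. The natural route is to work inside the proof of Theorem~\ref{thm1.2}. That proof, like the one for Theorem~\ref{thm1.1}, should locate a set $S$ of $k$ high-degree vertices and a large common neighborhood. It should also find one extra edge closing an odd cycle, namely an edge inside the $k+1$ side. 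This yields $\Omega(m^k)$ copies of $K_{k+1,k}^+$: choose $k$ vertices from a common neighborhood of size $\Theta(\sqrt m)$ for the $t$-side, which gives $\Theta(m^{k/2})$ choices. This alone is not enough, so we must instead use Theorem~\ref{thm1.1} directly, which gives $N(K_{k+1,k}^+,G)=\Omega(m^{(s+t-1)/2})=\Omega(m^{k})$. Every copy of $K_{k+1,k}^+$ contains at least one copy of $F$, and a fixed copy of $F$, having at least $2k+1$ vertices and hence all $2k+1$ vertices, lies in at most a bounded number $c_k$ of copies of $K_{k+1,k}^+$ on the same vertex set. Hence
\[
N(F,G)\;\geq\;\frac{N(K_{k+1,k}^+,G)}{c_k}=\Omega(m^k).
\]
Thus the lower bound follows from Theorem~\ref{thm1.1} with $(s,t)=(k+1,k)$ rather than from Theorem~\ref{thm1.2}. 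The key check is that $F\subseteq K_{k+1,k}^+$ spans all $2k+1$ vertices, since $C_{2k+1}\subseteq F$, so the map from copies of $K^+_{k+1,k}$ to copies of $F$ is at most $c_k$-to-one.

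\emph{Tightness.} I would take the extremal-type graph $G_0$ from the tightness part of Theorem~\ref{thm1.2}, which has $\rho(G_0)>g_k(m)$ and $N(C_{2k+1},G_0)=O(m^k)$. A copy of $F$ contains a spanning $C_{2k+1}$, and each copy of $C_{2k+1}$ lies in at most $2^{\binom{2k+1}{2}}$ copies of $F$ on its vertex set. Therefore
\[
N(F,G_0)\leq 2^{\binom{2k+1}{2}}N(C_{2k+1},G_0)=O(m^k).
\]
The only delicate point is the bookkeeping that copies are counted as subgraphs, so that both multiplicity constants depend only on $k$.
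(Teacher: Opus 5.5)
Your final argument is correct and is essentially the paper's proof: the lower bound comes from Theorem~\ref{thm1.1} with $(s,t)=(k+1,k)$, using that $F$ spans all $2k+1$ vertices so each copy of $F$ lies in $O_k(1)$ copies of $K_{k+1,k}^{+}$. Tightness comes from the construction in Theorem~\ref{thm1.2} via $N(F,G_0)=O\bigl(N(C_{2k+1},G_0)\bigr)$. You should delete your opening paragraph and the exploratory detour, because they assign the roles backwards: a lower bound on $N(C_{2k+1},G)$ gives no lower bound on $N(F,G)$ for the larger graph $F$, and an upper bound on $N(K_{k+1,k}^{+})$ for the Section~\ref{sec3} construction does not by itself bound $N(F)$.
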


The remainder of the paper is organized as follows. Section~\ref{sec2} introduces the necessary notation and auxiliary lemmas. Section~\ref{sec3} presents a construction showing that the bound in Theorem~\ref{thm1.1} is tight up to a constant factor. Section~\ref{sec4} develops the structural properties of the core subgraph, and Section~\ref{sec5A} completes the proof of Theorem~\ref{thm1.1}. Finally, Section~\ref{sec6} contains the proofs of Theorem~\ref{thm1.2} and Corollary~\ref{cor1.1}.

\section{Preliminaries}\label{sec2}

We start with three notations that will be used to formulate our stability statements.
Given two graphs $G$ and $H$ (which may have distinct vertex sets),
we define their \emph{distance} as
$$d(G, H):=|E(G)\setminus E(H)|+|E(H)\setminus E(G)|,$$
which counts the minimum number of edge modifications (additions or deletions) needed to transform $H$ into $G$.
Given $r$ disjoint vertex sets $U_1,\dots,U_r$,
we use $K_{U_1,\dots,U_r}$ to represent the complete $r$-partite graph with parts $U_1,\dots,U_r$.
For a bipartite graph $F$, we denote by $\beta'(F)$ the cardinality of a minimum independent set $V_0\subseteq V(F)$
such that every edge in $E(F)$ has at least one endpoint in $V_0$.

The following theorem establishes a
spectral supersaturation phenomenon, which guarantees that once the spectral
radius of an $m$-edge graph $G$ strictly exceeds the corresponding threshold with respect to $F$,
the number of copies of
$F$ in $G$ must grow at the scale of $m^{|F|/2}$.

\begin{lem}[\cite{Fang2025+A}]\label{thm2.2G}
Let $F$ be an $f$-vertex graph with $\chi(F)=r+1\geq2$.
For any $\varepsilon >0$,
there exists a constant $\delta=\delta(F,\varepsilon)>0$ such that
for every graph $G$ of sufficiently large size $m$ whose spectral radius satisfies
 $$\rho(G)\geq \left\{
                                       \begin{array}{ll}
                                         \sqrt{\big(1-\frac{1}{r}+\varepsilon\big)2m}  & \hbox{if $r\geq 2$,} \\
                                         \sqrt{\big(1+\varepsilon\big)m} & \hbox{if $r=1$ and $\beta'(F)\geq 2$,}
                                       \end{array}
                                     \right.
$$
we have $N(F,G)\geq \delta\cdot m^{f/2}$.
\end{lem}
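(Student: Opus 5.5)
The plan is to reduce the spectral hypothesis to a density statement on a vertex set of order $\sqrt m$, and then apply classical (Erdős–Simonovits) supersaturation inside that set. The reason this should work is that $\rho(G)=\Theta(\sqrt m)$, so copies of $F$ ought to live on $\Theta(\sqrt m)$ vertices. Any set $U$ with $|U|=n=\Theta(\sqrt m)$ and $e(G[U])\geq (1-\frac1r+\varepsilon')\frac{n^2}{2}$ contains $\Omega(n^{f})=\Omega(m^{f/2})$ copies of $F$. This follows from the Erdős–Simonovits supersaturation theorem, since $\chi(F)=r+1$. So the whole task is to find such a $U$, or a weighted substitute for it.

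\emph{Step 1 (normalizing the Perron vector).} Let $\mathbf x$ be the unit Perron vector and $\rho=\rho(G)\geq \sqrt{(1-\frac1r+\varepsilon)2m}$. Using $\rho x_v=\sum_{u\sim v}x_u$ together with Cauchy–Schwarz, we have $\rho^2x_v^2\leq d(v)\sum_{u\sim v}x_u^2\le d(v)$. I would then split the vertices into the set $L$ of vertices with $d(v)\geq \eta\sqrt m$ and the rest. Since $\sum d(v)=2m$, we get $|L|\leq 2\eta^{-1}\sqrt m$.

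\emph{Step 2 (mass is concentrated near $L$).} The aim is to show that the quadratic form $\rho=2\sum_{uv\in E}x_ux_v$ loses only $O(\eta^{1/2})\rho$ when we discard edges not inside a suitable set $U\supseteq L$ of size $O_\eta(\sqrt m)$. For $r\ge2$ I would use the second-moment identity $\rho^2=\sum_v(\sum_{u\sim v}x_u)^2$. Bounding the part coming from low-degree vertices should show that the contribution of edges with both endpoints of low degree is $o(m)$ relative to $\rho^2$. The reason is that such edges contribute at most $\sum d(v)x_v^2\cdot(\text{small})$.

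\emph{Step 3 (converting to edge density).} Let $\mathbf y$ be the restriction of $\mathbf x$ to $U$. We then have $\mathbf y^TA(G[U])\mathbf y\ge(1-O(\eta^{1/2}))\rho$. At this point I would apply Motzkin–Straus-type reasoning, or a weighted Turán argument. Write $H=G[U]$ and blow up each vertex $v$ proportionally to $y_v^2$. The spectral inequality then forces a weighted graph on $U$ whose weighted edge density exceeds the Turán density $1-\frac1r$ by $\varepsilon/2$. The weighted version of supersaturation, obtained by blowing up and then using the fact that copies in a blow-up project to homomorphic copies, together with the usual degenerate-copy count, then yields $\Omega(|U|^f)$ genuine copies. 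A cleaner alternative is to invoke Nikiforov's spectral Erdős–Stone–Bollobás theorem together with its counting version for joints. This would show that $H$ has many copies of the blow-up $K_{r+1}(t)$, from which $F$-copies are counted by homomorphism density.

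\emph{Main obstacle.} The hard step is Step 2, and especially the case $r=1$. There a star $K_{1,m}$ has $\rho=\sqrt m$ and half of the eigenvector mass sits on leaves of degree $1$. So one cannot simply discard low-degree vertices; one must keep the relevant $K_{1,\cdot}$-structures. Here the hypothesis $\beta'(F)\geq2$ must enter. Since $\rho^2>(1+\varepsilon)m$ exceeds the star bound, I would first prove that at least two high-degree vertices carry $\Omega(1)$ of the Perron mass. I would then show that these vertices have $\Omega(m)$ common $2$-walks, via $\rho^2=\sum_v(\sum_{u\sim v}x_u)^2$ and subtracting the single-star contribution. From this, a Kővári–Sós–Turán-type count of the copies of $K_{\beta'(F),\,\cdot}$, which contain $F$, should give the required $\Omega(m^{f/2})$ copies of $F$. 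I expect the bookkeeping in this bipartite case to be the most delicate point, and I would try the two-center argument first.
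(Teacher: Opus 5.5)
The architecture (localize to $O(\sqrt m)$ vertices, then apply dense supersaturation) is reasonable. However, there is a genuine gap at Step~2, and that step is essentially the whole lemma. You never show that the Perron form localizes on $O(\sqrt m)$ vertices, and the one estimate you give cannot do it, because it uses only $\rho\ge\sqrt m$. Edges with both ends of degree $<\eta\sqrt m$ do contribute at most $\rho(G_{\mathrm{low}})<\eta\sqrt m$ to $\mathbf x^{\top}A\mathbf x$. But the form can live entirely on edges between high- and low-degree vertices. In the book $K_2\vee E_N$ one has $\rho^2=m+\Theta(\sqrt m)>m$. The two hubs together and the $\Theta(m)$ degree-$2$ leaves together each carry about half of the Perron mass, with leaf entries $\Theta(m^{-1/2})$. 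All of $\mathbf x^{\top}A\mathbf x$ sits on hub--leaf edges, so any $U$ with $|U|=O(\sqrt m)$ captures only $O(1)$ of it. The margin $\varepsilon$ must therefore be used to exclude star- and book-like configurations, for every $r$ and not only for $r=1$, and the proposal gives no mechanism for this.

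Step~3 has a second, smaller problem. Turning $\sum_{\phi}\prod_v y_{\phi(v)}^2\ge c$ into $\Omega(|U|^f)$ genuine copies requires $\max_v y_v^2=O(1/|U|)$, which you do not have. The alternative via Nikiforov's spectral Erd\H{o}s--Stone--Bollob\'as theorem also fails: that theorem is vertex-normalized, whereas you only know $\rho(G[U])\ge c_\eta|U|$ with $c_\eta$ small. Once you have $U$ with $|U|=O(\sqrt m)$ and $\rho(G[U])\ge(1-o(1))\rho$, work with $G[U]$ itself:
\begin{itemize}
\item $G[U]$ is dense and satisfies $\rho(G[U])^2\ge(1-\frac1r+\frac\varepsilon2)2e(G[U])$.
\item If $N(F,G[U])=o(|U|^f)$, supersaturation gives $o(|U|^{r+1})$ copies of $K_{r+1}$ in $G[U]$.
\item The removal lemma then makes $G[U]$ $K_{r+1}$-free after deleting $o(m)$ edges. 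By Weyl's inequality and Lemma~\ref{lem2.8}(i), this lowers $\rho$ by only $o(\sqrt m)$, contradicting Nikiforov's bound $\rho^2\le(1-\frac1r)2e$.
\end{itemize}

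Your plan for $r=1$ points the wrong way. Two hubs of $\Omega(1)$ Perron mass with $\Omega(m)$ common neighbours is modelled on $K_2\vee E_N$, which the margin excludes. Conversely, $K_n$ satisfies the hypothesis with $\rho^2\approx 2m$, yet every entry is $n^{-1/2}$ and any two vertices have only $n-2=O(\sqrt m)$ common neighbours. Moreover, two centres only produce $K_{2,\cdot}$, which does not contain $F$ when $\beta'(F)\ge 3$. In fact $r=1$ is the easy case of the localization. Put $S=\{v:x_v^2\ge\gamma/\sqrt m\}$, so $|S|\le\sqrt m/\gamma$, and let $\alpha=\|\mathbf x_S\|_2^2$ and $\mu=e(G[S])/m$. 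Split $\mathbf x^{\top}A\mathbf x$ over $E(S)$, $E(S,\bar S)$ and $E(\bar S)$, and use Lemma~\ref{lem2.8} together with $x_ux_v<\gamma/\sqrt m$ on $E(\bar S)$. This gives
\[
\frac{\rho}{\sqrt m}\le\sqrt{2\mu}\,\alpha+2\sqrt{(1-\mu)\alpha(1-\alpha)}+2\gamma\le\frac{\sqrt{2\mu}+\sqrt{4-2\mu}}{2}+2\gamma\le 1+\sqrt{\mu/2}+2\gamma .
\]
So $\rho^2\ge(1+\varepsilon)m$ with $\gamma\ll\varepsilon$ forces $e(G[S])=\Omega(\varepsilon^2 m)$. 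Then $G[S]$ is dense with $|S|\ge\sqrt{2e(G[S])}=\Omega(\sqrt m)$, and Erd\H{o}s--Simonovits supersaturation for bipartite $F$ finishes. No two-centre argument and no use of $\beta'(F)$ is needed.

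For $r\ge 2$ the same crude splitting is insufficient. Even assuming $\rho(G[S])^2\le e(G[S])$, it only yields $\rho^2\le(\frac43+O(\gamma))m$ when $r=2$, which does not contradict the hypothesis. The cross term over $E(S,\bar S)$ therefore has to be controlled structurally. One possible tool is to pass first to a subgraph with no $\varepsilon_0$-dense subgraph, as in Section~\ref{sec4}. There Lemmas~\ref{Lem4.3} and~\ref{Lem4.4} give $x_ux_v\gtrsim h^{-1/2}$ on every edge and $2hx_u^2\ge(1-2\varepsilon_0)d_H(u)$ at low-degree vertices.

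The paper imports this lemma from its reference without proof, so there is no internal argument to compare yours against.
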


To analyze the structural properties of our extremal graphs with respect to $N(F,G)$, we require the
following spectral supersaturation-stability result.

\begin{lem}[Edge-spectral supersaturation-stability \cite{Fang2025+A}]\label{thm2.1G}
Let $F$ be a fixed graph of order $f$ with $\chi(F)=r+1\geq2$,
and let $G$ be a graph of sufficiently large size $m$ with $N(F,G)=o(m^{f/2})$.
For every $\varepsilon >0$, there exists a constant $\delta=\delta(F,\varepsilon)>0$ such that:

\vspace{0.25mm}
{\rm (i)}
If $r\geq 3$ and $\rho(G)\geq\sqrt{(1-\frac{1}{r}-\delta)2m}$,
then there exists a Tur\'{a}n graph $T_{n,r}$ such that
$V(T_{n,r})\subseteq V(G)$ and $d(G,T_{n,r})\leq \varepsilon m$;

\vspace{0.25mm}
{\rm (ii)} If $r=2$ and $\rho(G)\geq \sqrt{(1-\delta)m}$, or if $r=1$, $\beta'(F)\geq 2$,
and $\rho(G)\geq \sqrt{(1-\delta)m}$,
then there exist two disjoint subsets $U,V\subseteq V(G)$
with $d(G,K_{U,V})\leq \varepsilon m$.
\end{lem}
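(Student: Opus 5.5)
The plan is a reduction to a dense core, followed by the graph removal lemma and then the edge-spectral stability theorem of Li, Liu and Zhang~\cite{Li2025+}. Fix $\varepsilon>0$; we pick $\delta$ small in terms of $\varepsilon$ and $F$, and argue by contradiction for large $m$. The graph $G$ may be very sparse, with $|G|$ unbounded in terms of $m$, so the removal lemma cannot be applied to $G$ directly. The first step is therefore to pass to a dense subgraph that carries almost all of the spectral radius.

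\emph{Step 1: the core.} Let $\eta>0$ be a small constant and let $L=\{v: d_G(v)\geq \eta\sqrt m\}$, so that $|L|\leq 2\sqrt m/\eta$. Let $\mathbf{x}$ be the unit Perron vector of $G$. Split
\[
\rho(G)=\mathbf{x}^{T}A(G)\mathbf{x}=\mathbf{x}^{T}A(G[L])\mathbf{x}+2\sum_{uv\in E,\,v\notin L}x_ux_v.
\]
We bound the second sum using $\rho x_v\leq \sqrt{d_v}\,\bigl(\sum_{u\sim v}x_u^2\bigr)^{1/2}$ together with the Cauchy--Schwarz inequality. This should show that edges incident to low-degree vertices contribute only $O(\sqrt{\eta})\sqrt m$; the standard Nikiforov-type estimate is $\rho(G)^2\leq \rho(G[L])^2+O(\sqrt\eta)\, m$, and I would aim to prove exactly this. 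It follows that $\rho(G[L])\geq \sqrt{(1-\frac1r-2\delta)2m}$ (resp.\ $\sqrt{(1-2\delta)m}$) once $\eta$ is small. Moreover $H:=G[L]$ has $n:=|L|=O(\sqrt m)$ vertices and $N(F,H)\leq N(F,G)=o(m^{f/2})=o(n^{f})$.

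\emph{Step 2: removal.} By the graph removal lemma, we can delete a set $B$ of $o(n^2)=o(m)$ edges from $H$ so that $H':=H-B$ is $F$-free. Since $\rho(H)-\rho(H')\leq\rho(B)\leq\sqrt{2e(B)}=o(\sqrt m)$, the graph $H'$ is an $F$-free graph with at most $m$ edges. Its spectral radius is at least $\sqrt{(1-\frac1r-3\delta)2m}$ in case~(i), and at least $\sqrt{(1-3\delta)m}$ in case~(ii).

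\emph{Step 3: stability.} We now apply the edge-spectral stability theorem of~\cite{Li2025+} to $H'$. For $r\geq3$ it gives a Tur\'an-type $r$-partite graph within edit distance $\varepsilon m/3$. For $r=2$ it gives a complete bipartite graph $K_{U,V}$. For $r=1$ with $\beta'(F)\geq 2$, the relevant extremal configurations are the star-like graphs $K_{U,V}$ with $|U|$ small. Here I would need the corresponding bipartite stability statement, which I expect to follow from the same method, since stars are $F$-free precisely because $\beta'(F)\geq 2$. Finally we transfer back to $G$. We have $e(H')\geq \rho(H')^2/2$ in the first case, and in all cases $e(H')$ is close to $m$ by the eigenvalue bound on the edge count for the relevant structure. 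Hence $d(G,H')\leq e(B)+(m-e(H))+\ldots\leq \varepsilon m/3+o(m)$. Combining the two distance bounds gives the claimed bound $\varepsilon m$. In case~(i) one also has to equalize the part sizes to get a genuine $T_{n,r}$, which costs only $O(\sqrt\delta)\,m$ edits via the spectral condition.

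\emph{Main obstacle.} I expect Step 1 to be the hardest: showing that the low-degree vertices carry both negligible spectral weight and only $o(m)$ edges once $\rho$ is near the threshold. The spectral part is handled by the Cauchy--Schwarz estimate above. The edge-count part needs the following observation: if $\rho(G)\approx\rho(H)$ and $H$ is near-extremal, then $e(H)\geq (1-o(1))m$, since otherwise $\rho(H)$ would be too small for its number of edges. I would handle this by applying Nikiforov's bound $\rho^2\leq(1-\frac1r)2e$ to the $F$-free graph $H'$, which gives $e(H')\geq (1-O(\delta))m$.
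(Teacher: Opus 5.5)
There is a genuine gap in Step~1. For part~(ii) it cannot be repaired within your strategy.

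\textbf{Step~1 fails on graphs that satisfy the hypotheses of (ii).} The estimate $\rho(G)^2\le\rho(G[L])^2+O(\sqrt\eta)\,m$ is false in general. Take $G=K_{a,b}$ with $a$ fixed and $ab=m$, and any $F$ with $\chi(F)=3$. Then $N(F,G)=0$, because $G$ is bipartite, and $\rho(G)=\sqrt m$. But the $b$ vertices of degree $a<\eta\sqrt m$ all lie outside $L$. So $L$ is the side of size $a$, $G[L]$ has no edges, and $\rho(G[L])=0$. The same happens for the star $K_{1,m}$ when $r=1$ and $\beta'(F)\ge 2$. It also happens for the extremal split graph $S_{s-1,m}$ with $F=K_{s,t}^+$, where $G[L]\cong K_{s-1}$.

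In these examples the Perron vector has total mass $\frac12$ on the low-degree side. The cross term $2\sum_{uv\in E,\,v\notin L}x_ux_v$ is therefore all of $\rho(G)$, and no Cauchy--Schwarz estimate can make it $O(\sqrt\eta)\sqrt m$. So the claim $\rho(G[L])\ge\sqrt{(1-2\delta)m}$ is false. Steps~2--3 only ever see $G[L]$, so they cannot produce the pair $(U,V)$: here $V$ has order $\Theta(m)$ and every edge meets a vertex of bounded degree.

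These unbalanced configurations are not a corner case. They are exactly what the paper needs from part~(ii); compare Lemma~\ref{lem2.7G} and Lemma~\ref{LEM4.8B}, where $|U_1|$ may be bounded. A proof of (ii) must therefore work with the sparse graph $G$ itself and keep the low-degree side, for instance through a direct analysis of the Perron vector of $G$. A core on $O(\sqrt m)$ vertices followed by the removal lemma cannot do this.

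\textbf{Further gaps.}

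\emph{Case~(i).} The displayed inequality is false as a general statement; the star is again a counterexample. To rescue case~(i) you would have to prove, from the hypothesis $\rho^2\ge(1-\frac1r-\delta)2m$ with $2(1-\frac1r)>1$, that only $O(\delta+\eta)m$ edges meet low-degree vertices and that these edges carry negligible spectral weight. This is plausible, but it is a real argument you have not supplied, not a standard estimate.

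\emph{Case $r=1$ in Step~3.} You say you expect the needed bipartite stability to follow from the same method. That statement is exactly part~(ii) for $r=1$, and the edge-spectral stability theorem you invoke covers only $\chi(F)\ge3$. So this case is assumed rather than proved.

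\emph{Minor point.} Nikiforov's bound $\rho^2\le(1-\frac1r)2e$ holds for $K_{r+1}$-free graphs. For an $F$-free graph $H'$ you need the asymptotic edge-spectral Erd\H{o}s--Stone--Simonovits bound instead; its $o(1)$ error is harmless.
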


The \emph{spectral norm} $\|B\|_2$ of a matrix $B$, induced by the $\ell_2$ vector norm, is given by
\[
\|B\|_2 = \sup_{\mathbf{y} \neq \mathbf{0}} \frac{\|B\mathbf{y}\|_2}{\|\mathbf{y}\|_2}.
\]
We will use the following estimates for the spectral norms of adjacency and biadjacency matrices in terms of the number of edges.
These bounds are well known,
but we include a brief proof for completeness and ease of reference.

\begin{lem}\label{lem2.8}
Let $G$ be a graph with $m$ edges. Then, we have the following properties:

\vspace{0.25mm}
{\rm (i)}  $\|A(G)\|_2=\rho(G)\leq \sqrt{2m}$;

\vspace{0.25mm}
{\rm (ii)} If $G$ is a bipartite graph with
$A(G) = \begin{pmatrix}
\mathbf{0} & B \\
B^\top & \mathbf{0}
\end{pmatrix},
$
then $\|B\|_2=\rho(G)\leq \sqrt{m}$.
\end{lem}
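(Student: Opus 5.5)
The statement to prove is Lemma~\ref{lem2.8}. Both parts are standard linear-algebra facts, and I would prove them directly from the trace identity. In both parts the equality with the spectral norm comes from symmetry or from the block structure.

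For (i), $A=A(G)$ is real symmetric, so it has an orthonormal eigenbasis with real eigenvalues $\lambda_1\geq\cdots\geq\lambda_n$. Hence $\|A\|_2=\max_i|\lambda_i|$. By Perron--Frobenius, since $A$ is nonnegative, $\rho(G)=\lambda_1\geq|\lambda_i|$ for all $i$, which gives $\|A\|_2=\rho(G)$. For the bound I would use
\[
\rho^2(G)=\lambda_1^2\leq\sum_{i}\lambda_i^2=\operatorname{tr}(A^2)=\sum_{v}d(v)=2m,
\]
so $\rho(G)\leq\sqrt{2m}$.

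For (ii), $\|B\|_2^2=\|B^\top B\|_2$ is the largest eigenvalue $\sigma_1^2$ of $B^\top B$, where $\sigma_1$ is the largest singular value of $B$. Next I would compute
\[
A^2=\begin{pmatrix} BB^\top & \mathbf{0}\\ \mathbf{0} & B^\top B\end{pmatrix}.
\]
The nonzero eigenvalues of $BB^\top$ and $B^\top B$ coincide, so the largest eigenvalue of $A^2$ is $\sigma_1^2$. On the other hand, the largest eigenvalue of $A^2$ is $\rho(G)^2$ by part (i). Therefore $\rho(G)=\sigma_1=\|B\|_2$.

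An alternative for this step is explicit. If $B\mathbf{y}=\sigma_1\mathbf{x}$ and $B^\top\mathbf{x}=\sigma_1\mathbf{y}$, then $(\mathbf{x},\mathbf{y})^\top$ is an eigenvector of $A$ with eigenvalue $\sigma_1$. Conversely, $\rho(G)\le\|A\|_2$, and $\|A(\mathbf{x},\mathbf{y})^\top\|^2=\|B\mathbf{y}\|^2+\|B^\top\mathbf{x}\|^2\le\sigma_1^2\|(\mathbf{x},\mathbf{y})\|^2$.

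For the bound in (ii), the eigenvalues of a bipartite graph are symmetric: if $(\mathbf{x},\mathbf{y})$ is an eigenvector for $\lambda$, then $(\mathbf{x},-\mathbf{y})$ is one for $-\lambda$. So both $\pm\rho(G)$ occur, and
\[
2\rho^2(G)\leq\operatorname{tr}(A^2)=2m.
\]
Equivalently, $\sigma_1^2\leq\operatorname{tr}(B^\top B)=\|B\|_F^2=m$, because each edge contributes exactly one entry $1$ to $B$. This gives $\rho(G)\leq\sqrt m$.

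I expect no real obstacle here. The only points needing care are the identification $\|A\|_2=\rho(G)$, which relies on nonnegativity to rule out a larger negative eigenvalue, and the fact that each edge is counted once in $B$ but twice in $A$.
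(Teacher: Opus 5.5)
Your proof is correct and follows essentially the same route as the paper: you bound the top eigenvalue of $M^\top M$ by its trace ($2m$ for $A(G)$, $m$ for $B$), and you identify $\|B\|_2$ with $\rho(G)$ through the block-diagonal form of $A^2(G)$. The Perron--Frobenius remark and the symmetric-spectrum variant for (ii) are harmless extras, since the paper defines $\rho(G)$ as the spectral radius and simply uses $\sigma_1^2\le\operatorname{tr}(B^\top B)=m$.
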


\begin{proof}
For every real matrix $M$, the matrix $M^\top M$ is clearly positive semidefinite.
Therefore, we have
$\|M\|_2=\sqrt{\rho(M^\top M)}\leq \sqrt{\operatorname{tr}(M^\top M)}.$

(i) Set $M:=A(G)$. Then, $M$ is a real symmetric matrix, and thus $\|M\|_2=\rho(M)=\rho(G)$.
Moreover, since $G$ has $m$ edges,
$M$ contains exactly $2m$ entries equal to $1$ and all remaining entries are 0.
Consequently, $\operatorname{tr}(M^\top M)=2m$.
It follows that $\|M\|_2\leq\sqrt{2m}.$

(ii)
Since $B$ is the biadjacency matrix of $G$, it contains exactly $m$ entries equal to $1$. Thus,
$\|B\|_2=\sqrt{\rho(B^\top B)}\leq \sqrt{\operatorname{tr}(B^\top B)}=\sqrt{m}.$
Moreover, we know that
$$A^2(G)=\begin{pmatrix}
BB^\top & \mathbf{0}\\
\mathbf{0} & B^\top B
\end{pmatrix},$$
which implies that
$$\rho^2(G)=\rho\big(A^2(G)\big)
=\max\big\{\rho(BB^\top),\rho(B^\top B)\big\}=\rho(B^\top B).
$$
Therefore, we obtain $\|B\|_2=\rho(G)\leq \sqrt{m}.$
\end{proof}

For any two disjoint vertex subsets $U, V \subseteq V(G)$,
we write $G[U]$ for the subgraph of $G$ induced by $U$ and $G-U$ for the subgraph of $G$ induced by $V(G)\setminus U$.
Let $E_G(U)$ denote the set of edges within $U$, and let
$E_G(U,V)$ denote the set of edges with one endpoint in $U$ and the other in $V$.
The following lemma estimates the spectral radius of a graph $G$ when it is
structurally close to a complete bipartite graph $K_{U,V}$ with one part of a fixed size.

\begin{lem}\label{lem2.7G}
Let $G$ be a graph of sufficiently large size $m$ with no isolated vertices.
Fix a positive integer $a$ and a positive constant $\varepsilon \leq \frac{1}{100a}$.
Suppose there exist two disjoint subsets $U,V \subseteq V(G)$ such that
$|U| = a$ and $d(G, K_{U,V}) \le \varepsilon m$. Then

\vspace{0.25mm}
{\rm (i)}
$\rho(G)\leq \sqrt{m}+\frac{|E_G(U)|}{|U|}+O_a\big(\frac{1}{\sqrt{m}}\big)$;

\vspace{0.25mm}
{\rm (ii)} $\rho(G)\leq g_a(m)$, with equality if and only if
$G\cong K_a\vee E_{\frac ma-\frac{a-1}{2}}.$
\end{lem}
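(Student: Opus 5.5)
We need to prove Lemma~\ref{lem2.7G}: (i) the bound $\rho(G)\le\sqrt m+\frac{|E_G(U)|}{|U|}+O_a(m^{-1/2})$, and (ii) $\rho(G)\le g_a(m)$ with equality only for $K_a\vee E_{\frac ma-\frac{a-1}{2}}$.

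\textbf{Setup.} Let $\mathbf x$ be the unit Perron vector of $G$ and write $\rho=\rho(G)$. Since $d(G,K_{U,V})\le\varepsilon m$, all but at most $\varepsilon m$ edges of $G$ go between $U$ and $V$. As $|U|=a$, we get $|V|\ge (1-\varepsilon)m/a$. Let $W=V(G)\setminus U$, and split the edges into three classes:
\begin{itemize}
\item $E_G(U)$, with $e_1:=|E_G(U)|\le\binom a2$;
\item $E_G(U,W)$, with $e_2:=|E_G(U,W)|$;
\item $E_G(W)$, with $e_3:=|E_G(W)|\le\varepsilon m$.
\end{itemize}
Then $m=e_1+e_2+e_3$, and
\[
\rho=2\sum_{E(U)}x_ux_v+2\sum_{E(U,W)}x_ux_w+2\sum_{E(W)}x_wx_{w'}.
\]

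\textbf{Step 1: mass concentrates on $U$.} First show $\sum_{u\in U}x_u^2\ge\frac12-O(\varepsilon)$ and that each $x_w$ with $w\in W$ is small. Since $\rho\ge\sqrt{e(K_{U,V}\cap G)}\ge\sqrt{(1-\varepsilon)m}$, Lemma~\ref{lem2.8}(i) applied to $G[W]$ gives $\rho(G[W])\le\sqrt{2\varepsilon m}$. So
\[
2\sum_{E(W)}x_wx_{w'}\le\sqrt{2\varepsilon m}\,\|\mathbf x_W\|^2 .
\]
Lemma~\ref{lem2.8}(ii) applied to the bipartite graph $G[U,W]$ gives
\[
2\sum_{E(U,W)}x_ux_w\le 2\sqrt{e_2}\,\|\mathbf x_U\|\,\|\mathbf x_W\|\le\sqrt{e_2},
\]
and the $U$-part contributes only $O(a)$. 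Comparing these with $\rho\ge\sqrt{(1-\varepsilon)m}$ forces $\|\mathbf x_U\|^2,\|\mathbf x_W\|^2=\frac12+O(\sqrt\varepsilon)$. Next, for $w\in W$, the eigen-equation gives $\rho x_w=\sum_{u\sim w,\,u\in U}x_u+\sum_{w'\sim w,\,w'\in W}x_{w'}$. Iterating once more, using that $d_W(w)$ is small for most $w$, shows $x_w=O(1/\sqrt m)$ for all but $O(\varepsilon m)$ of the vertices, and that the total contribution from $E(W)$ is $O(\varepsilon)\cdot\rho$.

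\textbf{Step 2: eliminate $W$ via the eigen-equations (the main obstacle).} The aim is to reduce to a quadratic inequality in $\rho$. Sum $\rho x_u$ over $u\in U$ and substitute the expressions for the $x_w$. For a vertex $w$ with $N(w)\subseteq U$ we have $x_w=\rho^{-1}\sum_{u\in N(w)}x_u$. Plugging this in yields, for $X=\sum_{u\in U}x_u$, an inequality of the form
\[
\rho^2X\le\sum_{u\in U}\Bigl(\rho\, d_U(u)x_u+\sum_{w\sim u}\ \sum_{u'\in N_U(w)}x_{u'}\Bigr)+(\text{error from }E(W)).
\]
Use $\sum_u d_W(u)$-type counting together with near-equality of the $x_u$ (each $x_u\approx X/a$, which follows since every $u\in U$ has degree about $m/a$ by the distance bound). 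The middle term is then at most $e_2X/a\cdot a/a$, and more carefully at most $(e_2/a)\cdot X\cdot(1+o(1))$ via the bound $x_{u'}\le\max_u x_u$. This gives
\[
\rho^2\le\rho\cdot\frac{2e_1}{a}+e_2+\text{err}.
\]
The delicate point is showing that the error from $W$-internal edges is nonpositive up to $O(1)$. Each edge inside $W$ costs one edge of the budget $m$, replacing a $U$--$W$ edge worth about $1$ in $\rho^2$, while contributing only $O(\sqrt{\varepsilon})$ to $\rho^2$. The first thing to try is to bound $2\rho\sum_{E(W)}x_wx_{w'}$ by $e_3\cdot O(\sqrt\varepsilon)$ using the $x_w$ estimates; the small exceptional set of $w$ with large $x_w$ can be handled by the degree bound $\rho x_w\le\sqrt{d(w)}\cdot\max$. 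With $e_2=m-e_1-e_3$, this yields
\[
\rho^2-\frac{2e_1}{a}\rho-(m-e_1)\le O(1),
\]
which gives (i) after solving the quadratic.

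\textbf{Step 3: part (ii).} Redo the inequality exactly. It is sharp when $x_u$ is constant on $U$, $e_3=0$, and every $w$ is adjacent to all of $U$. Without the $O(1)$ slack, the inequality reads $\rho^2\le\frac{2e_1}{a}\rho+m-e_1$, with $e_1\le\binom a2$. The right-hand root is increasing in $e_1$: at $e_1=\binom a2$ it equals exactly $g_a(m)$, since $g_a$ is the root of $\rho^2-(a-1)\rho-(m-\binom a2)=0$. For the equality case, track where each inequality is tight: this forces $G[U]=K_a$, no edges inside $W$, and every $w$ adjacent to all of $U$, i.e.\ $G\cong K_a\vee E_{\frac ma-\frac{a-1}{2}}$.

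Where $G$ deviates from this structure, the slack must be shown to be strictly positive and to beat the error terms. This is done by a local perturbation argument: each missing $U$--$W$ adjacency, and each vertex $w$ with neighbours in $W$, loses at least $\Omega(1/\sqrt m)$ in $\rho^2 X$, which dominates the remaining error terms.
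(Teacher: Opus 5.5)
Your target inequality $\rho^2-\frac{2e_1}{a}\rho-(m-e_1)\le O(1)$ is the right one, but Step~2 does not reach it. The failure is at the term $\rho\sum_{u\in U}d_U(u)x_u$. To replace it by $\rho\,\frac{2e_1}{a}X+O(X)$ when $G[U]$ is irregular, the entries $x_u$ ($u\in U$) must agree up to relative error $O(m^{-1/2})$.

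Your justification (every $u\in U$ has degree about $m/a$) only gives $d_G(u)=(1\pm O(a\varepsilon))\frac ma$. The hypothesis lets $d_W(u)$ vary by up to $\varepsilon m$ across $U$, a constant fraction of $m/a$, and the Perron entries on $U$ then really differ by a constant relative amount. For example, take $G[U]$ a path $u_1u_0u_2$ with $u_0$ complete to $W$, and let $u_1,u_2$ each miss $\varepsilon m/2$ neighbours in $W$. The first term then exceeds $\rho\,\frac{2e_1}{a}X$ by order $\varepsilon\sqrt m\,X$, far more than the $O(X)$ your inequality allows, and nothing in your sketch cancels this.

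The middle term, by contrast, needs no uniformity: $\sum_{u\in N_U(w)}x_u\le X$ bounds it by $e_2X$ directly. The factor $1+o(1)$ you wrote there would by itself cost $o(m)$ in $\rho^2$.

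The missing idea is to let nonuniformity of $\mathbf x_U$ penalize itself. Take the inner product of $\rho^2\mathbf x_U=\rho A(G[U])\mathbf x_U+BB^\top\mathbf x_U+BD\mathbf x_W$ with $\mathbf x_U$ rather than with $\mathbf 1_U$; here $B$ is the $U\times W$ block of $A(G)$ and $D=A(G[W])$. Put $\mathbf z=\mathbf x_U/\|\mathbf x_U\|_2$, $\mathbf u=\mathbf 1_a/\sqrt a$ and $\delta^2=1-(\mathbf z^\top\mathbf u)^2$.
\begin{itemize}
\item The at least $\frac{m}{2a}$ vertices of $V$ complete to $U$ give $\mathbf z^\top BB^\top\mathbf z\le e_2-\frac12\delta^2m$.
\item On the other side, $\mathbf z^\top A(G[U])\mathbf z\le\frac{2e_1}{a}+2\sqrt2(a-1)\delta$.
\item Since $\rho\le\sqrt{2m}$, the gain is at most $4(a-1)\sqrt m\,\delta$. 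Completing the square shows it is paid for by the loss $\frac12\delta^2m$ up to $8(a-1)^2$.
\end{itemize}
So no uniformity of $\mathbf x_U$ ever has to be proved.

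The edges inside $W$ have a similar quantitative problem. The hypothesis is $\varepsilon\le\frac1{100a}$, not ``$\varepsilon$ sufficiently small''. A per-edge bound $O(\sqrt\varepsilon)$ with unspecified constant and an unquantified exceptional set therefore does not show that such an edge is worth less than a $U$--$W$ edge. The paper gets the explicit factor $\frac12$ with no exceptional set:
\begin{itemize}
\item $(\rho-\rho(D))\|\mathbf x_W\|_2\le\sqrt{e_2}\,\|\mathbf x_U\|_2$ gives $\|\mathbf x_W\|_2\le2\|\mathbf x_U\|_2$;
\item Cauchy--Schwarz then gives $x_w\le\|\mathbf x_U\|_2/(4\sqrt a)$ for \emph{every} $w\in W$;
\item with $\sum_{u\in N_U(w)}x_u\le\sqrt a\,\|\mathbf x_U\|_2$, this yields $\mathbf x_U^\top BD\mathbf x_W\le\frac{e_3}{2}\|\mathbf x_U\|_2^2$, i.e.\ an effective budget $e_2+\frac{e_3}{2}=m-e_1-\frac{e_3}{2}$.
\end{itemize}

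Part~(ii) is also not established. ``Redo the inequality exactly'' followed by a ``local perturbation argument'' in which each defect costs $\Omega(1/\sqrt m)$ is a hope, not a proof, and it is unnecessary. The missing observation is a dichotomy on $e_1$.
\begin{itemize}
\item If $e_1\le\binom a2-1$, then $\frac{e_1}{a}\le\frac{a-1}{2}-\frac1a$. This constant gap absorbs the $O_a(m^{-1/2})$ error in (i), so $\rho<g_a(m)$.
\item If $e_1=\binom a2$, then $G[U]=K_a$. The crude bound $\mathbf x_U^\top A(K_a)\mathbf x_U\le(a-1)\|\mathbf x_U\|_2^2$ loses nothing, because $\rho(K_a)=a-1=2\binom a2/a$ equals the average degree. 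Together with $\|B^\top\mathbf x_U\|_2^2\le e_2\|\mathbf x_U\|_2^2$ and the bound on $\mathbf x_U^\top BD\mathbf x_W$ above, this gives $\rho^2\le(a-1)\rho+m-\binom a2-\frac{e_3}{2}$ with no slack. Hence $\rho\le g_a(m)$.
\end{itemize}
Equality then forces $e_3=0$, $\mathbf x_U$ to be a multiple of $\mathbf 1_a$, and $\sum_{w\in W}d_U(w)\bigl(a-d_U(w)\bigr)=0$. Since $G$ has no isolated vertices, every $w\in W$ is complete to $U$.

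A minor point: $\rho\ge\sqrt{e(K_{U,V}\cap G)}$ does not hold in general; it fails, e.g., for $G$ equal to $K_{U,V}$ minus one edge with $a\ge2$. Simply assume $\rho\ge\sqrt m$, since otherwise (i) and (ii) hold trivially.
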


\begin{proof}
Let $U^c:=V(G)\setminus U$ and $\rho:=\rho(G)$. Then $V\subseteq U^c$.
If $\rho<\sqrt m$, both conclusions (i) and (ii) hold trivially.
Hence, we may assume
$\rho\geq \sqrt m$ for the remainder of the discussion.

Let $\mathbf{x}$ be a nonnegative unit eigenvector of $A(G)$ corresponding to $\rho(G)$.
Under the vertex partition
$V(G) = U \cup U^c$, we write $A(G)$ in the block form
\[
A(G) = \begin{pmatrix}
A\big(G[U]\big) & B \\
B^\top & A\big(G[U^c]\big)
\end{pmatrix},
\]
where $B$ is the submatrix of $A(G)$ whose rows and columns are indexed by $U$ and $U^c$, respectively.
Let $D:=A(G[U^c])$.
Put $b:=|E_G(U,U^c)|$, $d:=|E_G(U^c)|$, and $e=|E_G(U)|.$
Then $b+d+e=m$.
Since $E_G(U^c)\cap E(K_{U,V})=\varnothing$,
the edit-distance assumption gives $d\leq d(G, K_{U,V})\leq \varepsilon m$.

We first estimate the norms of the partitioned eigenvector.
Let $\alpha = \|\mathbf{x}_{U}\|_2$ and $\beta = \|\mathbf{x}_{U^c}\|_2$.
The eigenvalue equation restricted to $U^c$ yields $\rho\,\mathbf{x}_{U^c} = B^\top\mathbf{x}_{U} + D\mathbf{x}_{U^c}.$
Applying (i) of Lemma \ref{lem2.8} to the induced subgraph $G[U^c]$, we obtain
\begin{align}\label{align-G01}
\rho(D) \le \sqrt{2d} \le \sqrt{2\varepsilon m} < \sqrt{m} \le \rho,
\end{align}
which immediately implies $\alpha > 0$. Indeed, if $\alpha = 0$,
then $\mathbf{x}_{U}=\mathbf{0}$, which would force $\rho$ to be an eigenvalue of $D$.
This contradicts the strict inequality $\rho(D) < \rho$ established in \eqref{align-G01}.
 Since $\rho I - D$ is symmetric and positive definite,
its minimum eigenvalue is precisely $\rho-\rho(D)$.
By the Rayleigh quotient inequality and  the Cauchy--Schwarz inequality,
we obtain $\big(\rho - \rho(D)\big)\|\mathbf{x}_{U^c} \|_2
\le \| (\rho I - D)\mathbf{x}_{U^c} \|_2$.
Combining this with $(\rho I - D)\mathbf{x}_{U^c} = B^\top\mathbf{x}_{U}$  yields
\[
\big(\rho - \rho(D)\big)\beta=\big(\rho - \rho(D)\big)\|\mathbf{x}_{U^c} \|_2
\le \| (\rho I - D)\mathbf{x}_{U^c} \|_2
= \|B^\top\mathbf{x}_{U}\|_2.
\]
By the definition of $\|B\|_2$,
it follows that $\|B\mathbf{y}\|_2 \le \|B\|_2 \|\mathbf{y}\|_2$ for any vector $\mathbf{y}$.
Thus,
\begin{align*}
\|B^\top\mathbf{x}_{U}\|_2 \le \|B^\top\|_2 \|\mathbf{x}_{U}\|_2 = \|B\|_2 \alpha.
\end{align*}
Applying (ii) of Lemma \ref{lem2.8} to the bipartite subgraph induced by $E_G(U,U^c)$,
we obtain $\|B^\top\|_2\leq\sqrt{b}$.
Hence,
$(\rho-\rho(D))\beta\le \alpha\sqrt b\le \alpha\sqrt m.$
Furthermore, from \eqref{align-G01} we know that $\rho-\rho(D)\geq(1-\sqrt{2\varepsilon})\sqrt{m}$.
Therefore, we conclude that
$\beta\leq\alpha/(1-\sqrt{2\varepsilon})\leq 2\alpha.$

We now bound the entries of $\mathbf{x}_{U^c}$.
For every vertex $v\in U^c$, the eigenvalue equation and the
Cauchy-Schwarz inequality imply that
\begin{align*}
\rho x_v
&=\sum_{u\in N_U(v)}\!\!x_u
+\!\!\sum_{u\in N_{U^c}(v)}\!\!x_u
\leq \sqrt {|U|}\,\alpha +\sqrt{d_{U^c}(v)}\,\beta
\leq \sqrt a\,\alpha+\sqrt{d}\,\beta.
\end{align*}
From the bounds $d\leq \varepsilon m$, $\beta\leq 2\alpha$, and $\rho\geq\sqrt m$ obtained earlier,
we get
$x_v\leq\left(
\sqrt{\frac{a}{m}}+2\sqrt{\varepsilon}
\right)\alpha.$
Since $\varepsilon\leq  \frac{1}{100a}$ and $m$ is sufficiently large, we further have
$x_v\leq \frac{\alpha}{4\sqrt a}$ for every  $v\in U^c$.

To bound the bilinear term $\mathbf{x}_{U}^{\top}BD\mathbf{x}_{U^c}$,
we define
$s_v=\sum_{u\in N_U(v)}x_u$ and $t_v=\sum_{u\in N_{U^c}(v)}x_u$ for each $v\in U^c.$
Then $\mathbf{x}_{U}^{\top}B=(s_v)^\top_{v\in U^c}$ and $D\mathbf{x}_{U^c}=(t_v)_{v\in U^c}$.
As shown above, $s_v\leq \sqrt a\,\alpha$ for every $v\in U^c.$
Together with $d=|E_G(U^c)|$, this gives
\begin{align}
\mathbf{x}_{U}^{\top}BD\mathbf{x}_{U^c}=\sum_{v\in U^c}s_vt_v
=\!\!\!\sum_{v_1v_2\in E_G(U^c)}\!\!\!(s_{v_1}x_{v_2}+s_{v_2}x_{v_1})
\leq 2d\cdot\sqrt a\,\alpha\cdot\frac{\alpha}{4\sqrt a}
=\frac{1}{2}d\alpha^2.
\label{eq:BD-bound}
\end{align}

Let $\mathbf{z}:=\frac{1}{\alpha}\mathbf{x}_{U}$ and $\mathbf{u}:=\frac{1}{\sqrt a}\mathbf{1}_a$.
Then $\|\mathbf{z}\|_2=\|\mathbf{u}\|_2=1$.
By the eigenvalue equations on $U$ and $U^c$, we have
$\rho\,\mathbf{x}_{U}
=A(G[U])\mathbf{x}_{U}+B\mathbf{x}_{U^c}$ and $\rho\,\mathbf{x}_{U^c}=B^\top\mathbf{x}_{U}+D\mathbf{x}_{U^c}.$
Multiplying the first equation by $\rho$ and combining the second equation, we deduce that
\[
\rho^2\mathbf{x}_{U}=\rho A(G[U])\mathbf{x}_{U}+B\rho\mathbf{x}_{U^c}
=
\rho A(G[U])\mathbf{x}_{U}
+
BB^\top\mathbf{x}_{U}
+
BD\mathbf{x}_{U^c}.
\]
Taking the inner product with $\frac1{\alpha^2}\mathbf{x}^\top_{U}$ on both sides
and invoking \eqref{eq:BD-bound} yields
\begin{equation}\label{eq:rho-square}
\rho^2=\rho^2\|\mathbf{z}\|^2_2
\leq
\rho\,\mathbf{z}^{\top}A(G[U])\mathbf{z}
+\mathbf{z}^{\top}BB^\top\mathbf{z}+\frac 12d.
\end{equation}

{\rm (i)}
Recall that $V\subseteq U^c$.
Define $V':=\{v\in V: N_G(v)\supseteq U\}$.
Then $V'\subseteq U^c$ and $|V'|\geq |V|-|E(K_{U,V})\setminus E(G)|.$
Moreover, we have $|E(K_{U,V})\setminus E(G)|\leq d(G, K_{U,V})\leq \varepsilon m$ and
$$m-a|V|=|E(G)|-|E(K_{U,V})|
\leq d(G,K_{U,V})
\leq \varepsilon m.$$
It follows that $|V'|\geq |V|-\varepsilon m$ and $a|V|\geq (1-\varepsilon)m$. Since $\varepsilon \leq \frac{1}{100a}$,
we immediately obtain
\begin{equation}\label{eq:U20-large}
a|V'|\geq a|V|-a\varepsilon m
\geq(1-\varepsilon-a\varepsilon)m
\geq\frac 12m.
\end{equation}

Observe that $\mathbf{z}^{\top}BB^\top\mathbf{z}
=\sum_{v\in U^c}(\sum_{u\in N_U(v)}z_u)^2$.
Because $\mathbf{z}$ is a unit vector, the Cauchy-Schwarz inequality implies that
$(\sum_{u\in N_U(v)}z_u)^2\leq d_U(v)$ for any $v\in U^c$.
In particular, for every $v\in V'$, we know that $d_U(v)=|U|=a$ and thus
$$\sum_{u\in N_U(v)}z_u=\sum_{u\in U}z_u=\sqrt{a}\cdot\mathbf{z}^{\top}\mathbf{u}.$$
Recall that $b=\big|E_G(U,U^c)\big|=\sum_{v\in U^c}d_U(v).$
Consequently,
$$b-\mathbf{z}^{\top}BB^\top\mathbf{z}
=\sum_{v\in U^c}\Big(d_U(v)-\big(\sum_{u\in N_U(v)}\!\!z_u\big)^2\Big)
\geq\sum_{v\in V'}\Big(a-a\big(\mathbf{z}^{\top}\mathbf{u}\big)^2\Big).$$
Since $\|\mathbf{z}\|_2=\|\mathbf{u}\|_2=1$, we have $|\mathbf{z}^{\top}\mathbf{u}|\leq1$.
Define $\delta:=\sqrt{1-(\mathbf{z}^{\top}\mathbf{u})^2}$.
Then,
$b-\mathbf{z}^{\top}BB^\top\mathbf{z}\ge\sum_{v\in V'}a\delta^2=|V'|a\delta^2.$
Combining this with \eqref{eq:U20-large} yields
\begin{equation}\label{eq:BB-defect}
\mathbf{z}^{\top}BB^\top\mathbf{z}\leq b-|V'|a\delta^2\le b-\frac{1}{2}\delta^2m.
\end{equation}

Since both $\mathbf{z}$ and $\mathbf{u}$ are nonnegative unit vectors,
we have $\|\mathbf{z}+\mathbf{u}\|_2\leq2$ and $0\leq\mathbf{z}^{\top}\mathbf{u}\leq1$.
Consequently, we deduce that
\[
\|\mathbf{z}-\mathbf{u}\|_2^2
=(\mathbf{z}-\mathbf{u})^\top(\mathbf{z}-\mathbf{u})=
2(1-\mathbf{z}^{\top}\mathbf{u})
\leq
2\bigl(1-(\mathbf{z}^{\top}\mathbf{u})^2\bigr)
=
2\delta^2.
\]
By applying the Cauchy-Schwarz inequality and the definition of the matrix operator norm
(specifically, $\|B\mathbf{y}\|_2 \le \|B\|_2 \|\mathbf{y}\|_2$ for any vector $\mathbf{y}$), we obtain:
\begin{align*}
\left| \mathbf{z}^{\top}A(G[U])\mathbf{z} - \mathbf{u}^{\top}A(G[U])\mathbf{u} \right|
&= \left| (\mathbf{z} + \mathbf{u})^{\top} A(G[U]) (\mathbf{z} - \mathbf{u}) \right|
\leq \|\mathbf{z} + \mathbf{u}\|_2  \|A(G[U])(\mathbf{z} - \mathbf{u})\|_2 \\
&\leq \|\mathbf{z} + \mathbf{u}\|_2\|A(G[U])\|_2 \|\mathbf{z} - \mathbf{u}\|_2
\leq 2\sqrt{2}\delta\|A(G[U])\|_2.
\end{align*}
Note that
$\mathbf{u}^{\top}A(G[U])\mathbf{u}
=\frac{1}{a}\mathbf{1}_a^{\top}A(G[U])\mathbf{1}_a
=\frac{2e}{a}.$
Then,
$\mathbf{z}^{\top}A(G[U])\mathbf{z}
\leq \frac{2e}{a}+2\sqrt{2}\delta\|A(G[U])\|_2.$
Since $\|A(G[U])\|_2=\rho(G[U])\leq \rho(K_{|U|})=a-1$, it follows that
\begin{align}\label{eq:U1-internal}
\mathbf{z}^{\top}A(G[U])\mathbf{z}
\leq \frac{2e}{a}+2\sqrt{2}(a-1)\delta.
\end{align}
Substituting \eqref{eq:BB-defect} and \eqref{eq:U1-internal} into
\eqref{eq:rho-square}, and recalling $b+d+e=m$, we obtain
\begin{align*}
\rho^2\leq
\rho\big(\frac{2e}{a}+2\sqrt{2}(a-1)\delta\big)+b-\frac 12\delta^2m+\frac12d
\leq\frac{2e}{a}\rho+(m-e)+2\sqrt{2}(a-1)\rho\delta-\frac 12\delta^2m.
\end{align*}
From Lemma \ref{lem2.8}, we know that $\rho\leq\sqrt{2m}.$
Thus, we can deduce that
\begin{align*}
2\sqrt{2}(a-1)\rho\delta-\frac m2\delta^2
\leq 4(a-1)\sqrt{m}\,\delta-\frac m2\delta^2
=-\frac m2\Big(\delta-\frac{4(a-1)}{\sqrt{m}}\Big)^2+8(a-1)^2.
\end{align*}
Consequently,
$\rho^2\leq \frac{2e}{a}\rho+(m-e)+8(a-1)^2.$
Equivalently,
$\left(\rho-\frac ea\right)^2
\leq m-e+\frac{e^2}{a^2}+8(a-1)^2=m+O_a(1)$,
because $|U|=a$ and $e=|E_G(U)|\leq \binom a2$.
Therefore,
\[
\rho
\leq
\frac ea+\sqrt{m+O_a(1)}
=
\sqrt m+\frac ea+O_a\Big(\frac1{\sqrt m}\Big).
\]
This establishes conclusion (i).

\medskip

{\rm (ii)}
Recall that
$g_a(m)=\frac12\big(a-1+\sqrt{4m-a^2+1}\big).$
Clearly, $g_a(m)$ is the positive root of the polynomial
$x^2-(a-1)x-\big(m-\binom{a}{2}\big)$,
and $g_a(m)=\sqrt m+\frac{a-1}{2}+O_a\big(\frac1{\sqrt m}\big)$.
By conclusion (i),
$\rho\leq\sqrt m+\frac ea+O_a\big(\frac1{\sqrt m}\big).$
If $e\leq \binom a2-1$, then $\frac{e}{a}\leq\frac{a-1}{2}-\frac1a,$ and consequently
\begin{align*}
\rho\leq \sqrt m+\frac{e}{a} +O_a\Big(\frac1{\sqrt m}\Big)
<g_a(m),
\end{align*}
as desired.

It remains the case when $e=\binom a2$.
Now, we have $G[U]\cong K_a$.
Thus, the eigenvalue equations on $U$ and $U^c$ become
$\rho\mathbf{x}_{U}
=A(K_a)\mathbf{x}_{U}+B\mathbf{x}_{U^c}$
and
$\rho\mathbf{x}_{U^c}
=B^{\top}\mathbf{x}_{U}+D\mathbf{x}_{U^c}$.
Multiplying the first equation by $\rho$ and combining the second equation,
we deduce that
\[
\rho^2\mathbf{x}_{U}=\rho A(K_a)\mathbf{x}_{U}+B\rho\mathbf{x}_{U^c}
=\rho A(K_a)\mathbf{x}_{U}+BB^{\top}\mathbf{x}_{U}
+BD\mathbf{x}_{U^c}.
\]
Taking the inner product with $\mathbf{x}^\top_{U}$ on both sides and recalling $\|\mathbf{x}_{U}\|_2=\alpha$,
we obtain
\[
\begin{aligned}
\rho^2\alpha^2
&=\rho\mathbf{x}_{U}^{\top}A(K_a)\mathbf{x}_{U}
+\|B^{\top}\mathbf{x}_{U}\|_2^2
+\mathbf{x}_U^{\top}BD\mathbf{x}_{U^c}.
\end{aligned}
\]

Note that
$\mathbf{x}_{U}^{\top}A(K_a)\mathbf{x}_{U}\leq\rho(K_a)\|\mathbf{x}_{U}\|^2_2=(a-1)\alpha^2$.
Recall that $\|B^{\top}\|_2\leq \sqrt{b}$. Thus, we have
$\|B^{\top}\mathbf{x}_{U}\|_2^2\leq\|B^{\top}\|_2^2\|\mathbf{x}_{U}\|_2^2\leq b\alpha^2$.
Together with \eqref{eq:BD-bound}, this yields
$\rho^2\alpha^2
\leq\big((a-1)\rho+b+\frac 12d\big)\alpha^2.$
Hence, $\rho^2\leq(a-1)\rho+b+\frac 12d$.
Using $b+d=m-\binom a2,$
we further obtain
\begin{align*}
\rho^2\leq(a-1)\rho+m-\binom a2-\frac 12d
\leq(a-1)\rho+m-\binom a2.
\end{align*}
Therefore,
$\rho^2-(a-1)\rho-\big(m-\binom a2\big)\leq 0.$
Since $g_a(m)$ is the positive root of the corresponding quadratic
equation, it follows that $\rho(G)\leq g_a(m).$

We next characterize the equality case.
If $\rho(G)=g_a(m),$
then equality must hold throughout the preceding chain of inequalities.
In particular, we have $d=0$, thus $U^c$ is an independent set.
Moreover, equality also holds in
$\mathbf{x}_{U}^{\top}A(K_a)\mathbf{x}_{U}\leq\rho(K_a)\|\mathbf{x}_{U}\|^2_2,$
which implies that $\mathbf{x}_{U}$ is an eigenvector corresponding to $\rho(K_a)$.
Since $\|\mathbf{x}_{U}\|_2=\alpha$ and the eigenspace of $A(K_a)$ corresponding to $\rho(K_a)$
is generated by $\mathbf{1}_a$, it follows that
$\mathbf{x}_U=\frac{\alpha}{\sqrt a}\mathbf{1}_a.$
Furthermore, equality in
$\|B^\top\mathbf{x}_{U}\|_2^2\leq b\alpha^2$
now gives $\frac{1}{a}\alpha^2\sum_{v\in U^c}d^2_U(v)=\alpha^2\sum_{v\in U^c}d_U(v),$
which simplifies to $\sum_{v\in U^c} d_U(v)\big(a-d_U(v)\big)=0.$

Since $0\leq d_U(v)\leq a$ for every $v\in U^c$, each summand is
nonnegative. Therefore,
$d_U(v)\in\{0,a\}$ for every $v\in U^c$.
On the other hand, since $U^c$ is an
independent set and $G$ has no isolated vertices, we cannot have
$d_U(v)=0$ for any $v\in U^c$. Consequently,
$d_U(v)=a$ for every $v\in U^c$,
which further implies that $G\cong K_a\vee E_{|U^c|}$,
where $|U^c|=\big(m-\binom a2\big)/a=\frac ma-\frac{a-1}{2}.$
This establishes conclusion (ii).
\end{proof}

\section{Sharpness construction for Theorem~\ref{thm1.1}}\label{sec3}

%

By specializing a result of Sauer \cite{Sauer1970} on hypergraphs
to the case of ordinary graphs, we obtain the following existence lemma.

\begin{lem}\label{lem2.5T}\emph{(\cite{Sauer1970})}
For any integers $\ell \ge 3$ and $g \ge 1$, and any even $n$ divisible by $g$ satisfying
$$n\ge 2 \Big( \sum_{i=1}^{\ell-2} (g-1)^i + (g-1)^{\ell-2} \Big) + 1,$$
there exists a $g$-regular graph $H$ on $n$ vertices with girth at least $\ell$.
\end{lem}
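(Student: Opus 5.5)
Since this lemma is quoted from Sauer, I would reprove the graph case by the classical Erdős–Sachs \emph{switching} argument rather than specialise the hypergraph proof.

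\textbf{Setup.} First I exhibit some $g$-regular graph on $n$ vertices. Because $n$ is even and $n>g$, a circulant graph on $\mathbb{Z}_n$ works: use the connection set $\{\pm1,\dots,\pm\lfloor g/2\rfloor\}$, together with $n/2$ when $g$ is odd. Among all $g$-regular graphs on this vertex set, I then choose $H$ minimising $c(H)$, the number of cycles of length less than $\ell$. The goal is to show $c(H)=0$, so suppose for contradiction that $c(H)>0$ and fix an edge $uv$ lying on a short cycle.

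\textbf{Step 1: find a far edge.} I want an edge $xy$ whose endpoints are both at distance at least $\ell-1$ from $\{u,v\}$. The number of vertices within distance $\ell-2$ of $\{u,v\}$ is at most
\[
2\Big(1+\sum_{i=1}^{\ell-2}(g-1)^i\Big),
\]
since each of $u,v$ spends one edge on the other and branches with factor at most $g-1$ afterwards. The hypothesis on $n$ is exactly what makes the complement $W$ of this ball nonempty, with a margin of roughly $2(g-1)^{\ell-2}$ vertices. If $W$ spans no edge, every neighbour of a vertex $x\in W$ lies at distance exactly $\ell-2$. I would count edges between the sphere at distance $\ell-2$ and $W$ against the bound on the sphere size; this counting is where the extra term $(g-1)^{\ell-2}$ in the hypothesis should be used. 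In the worst case one can weaken the requirement to $x$ at distance $\ge\ell-1$ and $y$ at distance $\ge\ell-2$, and check that the cycle-length bounds in Step 2 still hold.

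\textbf{Step 2: the switch.} Replace the edges $uv,xy$ by $ux,vy$. Degrees are unchanged, so $H'$ is still $g$-regular on $n$ vertices. Every short cycle through $uv$ is destroyed, so at least one short cycle disappears. I then check that no new short cycle is created:
\begin{itemize}
\item A new cycle using only $ux$ contains an old path from $x$ to $u$. Its length is at least $d_H(x,u)\ge \ell-1$, so the cycle has length at least $\ell$.
\item The same holds for a new cycle using only $vy$.
\item A cycle using both new edges splits into two old paths, either $x\to y$ and $v\to u$, or $x\to v$ and $y\to u$. In the second case each path has length $\ge\ell-1$. In the first case one must use that the path $v\to u$ avoids the deleted edge $uv$, and that the $x\to y$ path avoids $xy$ while starting and ending far from $\{u,v\}$; this again gives total length $\ge\ell$.
\end{itemize}
Old cycles that avoided $uv$ and $xy$ are unaffected. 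Hence $c(H')<c(H)$, contradicting minimality, and so $H$ has girth at least $\ell$.

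\textbf{Main obstacle.} I expect the delicate step to be Step 1: guaranteeing a far \emph{edge}, not merely a far vertex, under exactly the stated bound on $n$. Step 2 is only case bookkeeping. The divisibility $g\mid n$ is not needed in this argument; it is inherited from Sauer's hypergraph setting, so I would not use it.
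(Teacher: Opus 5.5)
The paper gives no proof of this lemma; it is quoted from Sauer. Your switching argument therefore has to stand on its own, and it fails in the first sub-case of Step~2.

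\textbf{The gap.} A new cycle of that type has the form $u\,x\,P_1\,y\,v\,P_2\,u$. Here $P_1$ is an $x$--$y$ path in $H-xy$ and $P_2$ is a $v$--$u$ path in $H-uv$, and the two paths are vertex-disjoint. The fact that $\{x,y\}$ is far from $\{u,v\}$ gives no lower bound on $|P_1|$ or $|P_2|$, because each path can stay near its own pair.
\begin{itemize}
\item $P_2$ is certainly short for some choice, because $uv$ was picked on a short cycle.
\item $P_1$ is short whenever $xy$ lies on a short cycle, and you cannot rule that out: in your starting circulant with $g\ge 3$, every edge lies on a triangle or a $4$-cycle.
\end{itemize}
Two triangles already give a new $6$-cycle $u\,x\,z\,y\,v\,w\,u$, which is short once $\ell\ge 7$. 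With the total count $c(H)$ as potential, the switch destroys the $a$ short cycles through $uv$ and the $b$ short cycles through $xy$. It can, however, create up to $ab$ new ones, one per suitable pair $(P_1,P_2)$. So $c(H')<c(H)$ does not follow.

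\textbf{The fix.} Make the potential improve only at the girth.
\begin{itemize}
\item Minimise the vector $(c_3(H),\dots,c_{\ell-1}(H))$ lexicographically, where $c_j$ counts $j$-cycles.
\item Take $uv$ on a shortest cycle, of length $j_0<\ell$.
\item Since $P_1+xy$ and $P_2+uv$ are cycles of $H$, we get $|P_1|,|P_2|\ge j_0-1$. So the bad sub-case creates only cycles of length at least $2j_0>j_0$.
\item Your other cases give length at least $\ell>j_0$.
\end{itemize}
Thus no cycle of length at most $j_0$ is created, while at least one $j_0$-cycle is destroyed. This is a strict lexicographic decrease.

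Alternatively, run the classical Erd\H{o}s--Sachs version. Among graphs with maximum degree at most $g$ and girth at least $\ell$, take one with the most edges. There the far edge automatically lies on no short cycle, and the parity of $gn$ supplies the two deficient endpoints.

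\textbf{Step~1.} This works as you intended. Let $S_i$ be the set of vertices at distance $i$ from $\{u,v\}$, so $|S_i|\le 2(g-1)^i$. If $W$ spans no edge, then $g|W|\le (g-1)|S_{\ell-2}|$. This gives
$n\le 2\sum_{i=0}^{\ell-3}(g-1)^i+4(g-1)^{\ell-2}-2(g-1)^{\ell-2}/g$.
That contradicts the hypothesis, which reads $n\ge 2\sum_{i=0}^{\ell-3}(g-1)^i+4(g-1)^{\ell-2}-1$, because $2(g-1)^{\ell-2}>g$ for $g\ge 3$. The cases $g\le 2$ are trivial.

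Drop the proposed fallback with $y$ at distance $\ell-2$. If $d_H(y,u)=d_H(y,v)=\ell-2$, then the new edge $vy$ together with a shortest $y$--$v$ path closes a new $(\ell-1)$-cycle.
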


Recall that $m$ is sufficiently large.
Let $s\geq 3$ and $t\geq 2$ be constant integers,
and let $\phi$ be the unique positive integer divisible by $4s$ that satisfies
\begin{equation}\label{align-00G}
2s(\phi-4s)+(\phi-4s)^2< m \le 2s\phi + \phi^2.
\end{equation}
By setting $g=4s$ and $\ell=5$ in Lemma \ref{lem2.5T},
we obtain a $4s$-regular $C_4$-free graph $H^*$ of order $\phi$.
Define $\psi:=\lceil \frac{m-2s\phi}{\phi}\rceil$.
Then $\psi-1<\frac{m-2s\phi}{\phi}\leq \psi$,
and hence
$\phi(\psi-1)+2s\phi<m\leq \phi\psi+2s\phi.$
Note that $e(H^* \vee E_{\psi-1})=\phi(\psi-1)+2s\phi$
and $e(H^* \vee E_{\psi})=\phi\psi+2s\phi$.
Consequently, $e(H^* \vee E_{\psi-1})<m\leq e(H^* \vee E_{\psi})$.

Let $\mathbb{G}(m,H^*)$ denote the family of all $m$-edge graphs $G$ satisfying
$H^* \vee E_{\psi-1} \subseteq G \subseteq H^* \vee E_{\psi}$.
The graph $H^*\vee E_{\psi}$ is obtained from $H^*\vee E_{\psi-1}$ by adding one new vertex together with its $\phi$ incident edges.
Since $e(H^* \vee E_{\psi-1})<m\leq e(H^* \vee E_{\psi})$,
it is clear that $\mathbb{G}(m,H^*)\neq\varnothing$.

As a preparatory step,
we now show that every graph in $\mathbb{G}(m,H^*)$ achieves the number of copies of $K_{s,t}^+$ specified in Theorem~\ref{thm1.1},
and provide the related proofs.
This auxiliary tool will facilitate the verification of our main theorems.

\begin{lem}\label{lem2.5G}
For all sufficiently large $m$ and every graph $G\in\mathbb{G}(m,H^*)$, we have

\vspace{0.25mm}
\textnormal{(i)} $\rho(G)> g_{s-1}(m)$;

\vspace{0.25mm}
\textnormal{(ii)} $N(K_{s,t}^+, G) = \Theta\big(m^{\frac{s+t-1}{2}}\big)$.
\end{lem}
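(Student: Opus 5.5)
For (i), I would lower-bound $\rho(G)$ through the spanning subgraph $G_0:=H^*\vee E_{\psi-1}\subseteq G$, using monotonicity of the spectral radius. $G_0$ has an equitable partition $V(H^*)\cup V(E_{\psi-1})$. Each vertex of $H^*$ has $4s$ neighbours in $H^*$ and $\psi-1$ in the independent part, and each independent vertex has $\phi$ neighbours in $H^*$. So $\rho(G_0)$ is the largest root of
\[
x^2-4s\,x-\phi(\psi-1)=0,\qquad\text{i.e.}\qquad \rho(G_0)=2s+\sqrt{4s^2+\phi(\psi-1)}.
\]
Since $\phi(\psi-1)>m-2s\phi-\phi$ and $\phi=\Theta(\sqrt m)$ by \eqref{align-00G}, we get
\[
\sqrt{4s^2+\phi(\psi-1)}\ \ge\ \sqrt{m-(2s+1)\phi}\ =\ \sqrt m-\frac{(2s+1)\phi}{2\sqrt m}+O(1).
\]
Here $\phi\approx\sqrt m$: from $m\le 2s\phi+\phi^2$ and $m>(\phi-4s)^2$ we get $\phi=\sqrt m+O(1)$, and indeed $\phi\ge\sqrt{m+s^2}-s$.

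This step, the careful constant comparison, is the main obstacle. The bound above gives $\rho(G_0)\ge \sqrt m+2s-(2s+1)/2-O(1)$, and the $O(1)$ error must be tracked exactly. To do this, I would write $\phi=\sqrt m-c$ with $-s\le c\le 4s+O(1/\sqrt m)$, and expand everything to constant order: $\sqrt{m-(2s+1)\phi}=\sqrt m-(2s+1)/2+O(m^{-1/2})$, up to the $c$-dependent correction, which is also $O(m^{-1/2})$. This yields
\[
\rho(G_0)\ \ge\ \sqrt m+2s-\frac{2s+1}{2}-O(m^{-1/2})\ =\ \sqrt m+\frac{2s-1}{2}-O(m^{-1/2}).
\]
By \eqref{equ-001}, $g_{s-1}(m)=\sqrt m+\frac{s-2}{2}+O(m^{-1/2})$. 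Since $\frac{2s-1}{2}-\frac{s-2}{2}=\frac{s+1}{2}>0$, the inequality $\rho(G)>g_{s-1}(m)$ follows for large $m$. If my estimate of $\phi(\psi-1)$ is too crude, I would use $\phi(\psi-1)>m-2s\phi-\phi$ directly rather than via $\psi$; the margin of $\frac{s+1}{2}$ leaves ample room.

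For (ii), the lower bound follows from the structure of $G_0$. Take any edge $uv$ of $H^*$ (there are $2s\phi$ of them) and any $s-2$ further vertices of $H^*$ (there are $\binom{\phi-2}{s-2}$ choices). These $s$ vertices together with any $t$ vertices of $E_{\psi-1}$ span a $K_{s,t}^+$, because every vertex of $H^*$ is joined to every vertex of the independent set. This gives $\Omega(\phi\cdot\phi^{s-2}\cdot\psi^t)=\Omega(m^{(s+t-1)/2})$ copies, using $\phi,\psi=\Theta(\sqrt m)$.

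For the upper bound, I count copies in $H^*\vee E_{\psi}$. Fix a copy of $K_{s,t}^+$ with parts $S$ (containing the extra edge $e$) and $T$, and split into cases according to where its vertices lie.

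\textbf{Case 1: $T$ meets the independent part $I$.} Any vertex of $T$ in $I$ forces all of $S\subseteq V(H^*)$, since $I$ is independent. The edge $e$ is then an edge of $H^*$ ($O(\phi)$ choices), the remaining $S$-vertices give $O(\phi^{s-2})$ choices, and the at most $t$ vertices of $T$ give $O((\phi+\psi)^t)$ choices. The total is $O(m^{(s+t-1)/2})$.

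\textbf{Case 2: $T\subseteq V(H^*)$.} The vertices of $S$ that lie in $I$ are pairwise nonadjacent, so at least one endpoint of $e$ lies in $H^*$. If $S$ has a vertex in $H^*$ besides those endpoints, that vertex is adjacent to all $t\ge2$ vertices of $T$ inside $H^*$. Counting then uses the bounded degree $4s$ of $H^*$: a vertex $w$ of $S$ in $H^*$ confines $T$ to $N_{H^*}(w)$, giving $O(1)$ choices for $T$ once $w$ is fixed. This leaves at most $O(\phi\cdot(\phi+\psi)^{s-1})$ copies.

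We need this to be $O(\phi^{s+t-1})$. Since $s-1\le s+t-2$, the count $O(\phi^{s})$ is at most $O(\phi^{s+t-1})$ whenever $t\ge1$, so this case is fine. The $C_4$-freeness of $H^*$ further forces $|T\cap H^*|\le1$ whenever $S$ has two vertices in $H^*$, but the crude bound already suffices. Combining both cases, every configuration contributes $O(m^{(s+t-1)/2})$, which gives (ii).
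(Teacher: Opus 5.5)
Your proposal is correct and follows essentially the same route as the paper. For (i) and for the lower count in (ii) you use the equitable quotient of $H^*\vee E_{\psi-1}$, and for the upper count you use the same two-case split in $H^*\vee E_{\psi}$ according to whether the $t$-part meets the independent set. The differences are cosmetic: you expand $\sqrt{m-(2s+1)\phi}$ directly in $m$ instead of passing through $m'=e(H^*\vee E_{\psi-1})$, and in Case 2 you rightly note that the bounded degree of $H^*$ already gives $O(\phi^{s})$, so the $C_4$-freeness the paper uses to force $|X_H|=1$ is not needed. Your bounds on $c$ are reversed; they should read $-4s<c\le s$, which is harmless since only $c=O(1)$ is used.
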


\begin{proof}
(i) Denote $G'=H^* \vee E_{\psi-1}$ and $m'=e(G')$.
We first prove that
\begin{equation}\label{align-03G}
\rho(G') = \sqrt{m'} + s + O\Big(1\big/\sqrt{m'}\Big).
\end{equation}
Partition $V(G')$ as $\Pi: V(G')=V(H^*)\cup V(E_{\psi-1})$.
Note that $H^*$ is $4s$-regular and $|H^*|=\phi$. Then, $m'=\phi(\psi-1)+2s\phi$.
Moreover, every vertex in $V(H^*)$ is adjacent to all $\psi-1$ vertices in $V(E_{\psi-1})$,
and every vertex in $V(E_{\psi-1})$ is adjacent to all $\phi$ vertices in $V(H^*)$.
Hence, the partition $\Pi$ is equitable, and thus
$\rho(G')$ is the largest eigenvalue of the quotient matrix
\[
B_{\Pi} = \begin{bmatrix}
4s & \psi-1 \\
\phi & 0
\end{bmatrix}.
\]
Consequently, $\rho(G')$ is the largest root of the characteristic polynomial:
\[
f(x) := \det(xI_2-B_{\Pi}) = x^2 - 4sx - \phi(\psi-1).
\]
Then $\rho(G') = 2s + \sqrt{\phi(\psi-1)+4s^2}$.
Rearranging (\ref{align-00G}) gives $\phi(\phi-8s)+8s^2<m-2s\phi\leq \phi^2$.
Based on the definition of $\psi$, we obtain $\phi-8s<\psi \le \phi.$
Set $\mu:=\phi-\psi$. Then $0\leq\mu<8s$,
where $s\ge3$ is a constant integer.
Thus, we can deduce that
\begin{equation}\label{align-05G}
\rho(G')= 2s + \sqrt{\phi(\phi\!-\!\mu\!-\!1)+4s^2} = 2s + \phi - \frac{\mu\!+\!1}{2} + O\big(1\big/\phi\big).
\end{equation}
Since $m' = \phi(\psi-1)+ 2s\phi = \phi^2-(\mu+1-2s)\phi$,
solving this quadratic equation for $\phi$ yields
\begin{equation}\label{align-04G}
\phi =\big(\frac{\mu\!+\!1}{2}\!-\!s\big) + \sqrt{m'\!+\!\big(\frac{\mu\!+\!1}{2}\!-\!s\big)^2}
= \sqrt{m'}+ \frac{\mu\!+\!1}{2} - s + O\big(1\big/\sqrt{m'}\big).
\end{equation}
This implies that $\sqrt{m'}-s\leq\phi\leq\sqrt{m'}+4s$, and thus $\phi=\sqrt{m'}+O(1)$.
Substituting \eqref{align-04G} into \eqref{align-05G},
the term $\frac{\mu+1}2$ cancels out,
and equality \eqref{align-03G} follows immediately.

Now, we establish statement (i).
By the definition of \(g_{s-1}(m)\), since \(s\ge 3\), we have
\[
g_{s-1}(m)
=\frac{s-2+\sqrt{4m-s^2+2s}}{2}
<\sqrt m+\frac{s-2}{2}.
\]
Since \(G'=H^*\vee E_{\psi-1} \subseteq G \subseteq H^* \vee E_{\psi}\),
we have \(\rho(G)\geq \rho(G')\) and \(m'\le m\le m'+\phi\).
Moreover, for sufficiently large \(m\),
$\phi\le \sqrt{m'}+4s\le \frac{s}{2}\sqrt{m'}\le \frac{s}{2}\sqrt m.$
Hence
$m-\phi\ge m-\frac{s}{2}\sqrt m
\ge \left(\sqrt m-\frac{s}{2}\right)^2,$
and therefore
$\sqrt{m-\phi}+s-1
\ge \sqrt m+\frac{s-2}{2}.$
Combining this with \eqref{align-03G}, we obtain
\[
\rho(G)\ge \rho(G')\ge \sqrt{m'}+s-1
\ge \sqrt{m-\phi}+s-1
\ge \sqrt m+\frac{s-2}{2}
>g_{s-1}(m).
\]

\medskip
(ii) Denote $G''=H^*\vee E_{\psi}$.
We first demonstrate that
\begin{equation}\label{align-000G}
N(K_{s,t}^+,G'')=\Theta(m^{\frac{s+t-1}{2}}).
\end{equation}
Recall that $m'\le m \le m'+\phi$, $|H^*|=\phi=\sqrt{m'}+O(1)$ and $\phi-8s<\psi \le \phi,$
where $s$ is constant and $m$ is sufficiently large.
Then $\phi,\psi = \Theta(\sqrt{m})$.
Consequently, both $G'$ and $G''$ contain copies of $K_{s,t}^+$,
and proving \eqref{align-000G}
is equivalent to showing that $N(K_{s,t}^+, G'') = \Theta(\phi^{s+t-1})$.

Let \(\mathcal{F}\) be the family of all copies of \(K_{s,t}^{+}\) contained in \(G''\).
Given an $F\in\mathcal{F}$, we may decompose $V(F)$ as $X\cup Y$, where $F[X]\cong K_2\cup E_{s-2}$ and $F[Y]\cong E_t$.
We further partition $X$ and $Y$ by their intersections with the pre-fixed vertex subsets $V(H^*)$ and $ V(E_\psi)$ of $G''$:
\[
X_H = X \cap V(H^*), X_E = X \cap V(E_\psi), Y_H = Y \cap V(H^*), Y_E = Y \cap V(E_\psi).
\]
Let
$$
\mathcal{F}_1:=\{F\in\mathcal{F}:Y_E\neq\varnothing\}
\quad\text{and}\quad
\mathcal{F}_2:=\{F\in\mathcal{F}:Y_E=\varnothing\}.
$$
Then
$\mathcal{F}=\mathcal{F}_1\mathbin{\dot\cup}\mathcal{F}_2$.
For all $F\in\mathcal{F}$, we divide the counting analysis into two possible cases.

\medskip
\noindent\textbf{Case 1: $Y_E\neq\varnothing$.}

Let $F\in\mathcal{F}_1$. Choose a vertex $y\in Y_E$. Since
$y\in Y$, every vertex of $X$ is adjacent to $y$ in $F$, and hence
$X\subseteq N_F(y)\subseteq N_{G''}(y).$
On the other hand, since $y\in V(E_\psi)$ and
$G''=H^*\vee E_\psi$, we have
$N_{G''}(y)=V(H^*).$
Therefore,
$X\subseteq V(H^*).$
Thus, in Case~1, every admissible choice of $X$ is contained entirely
in $V(H^*)$.

We first derive an upper bound for $|\mathcal{F}_1|$.
For every $F\in\mathcal{F}_1$, let $uv$ be the added edge of the
corresponding copy of $K_{s,t}^+$. Since $X\subseteq V(H^*)$, we have
$uv\in E(H^*)$. Recall that $H^*$ is $4s$-regular on $\phi$ vertices,
and hence
$e(H^*)=2s\phi.$
Thus, there are at most $2s\phi$ choices for $uv$. Once $uv$ is fixed,
the remaining $s-2$ vertices of $X$ can be chosen in at most
$\binom{\phi-2}{s-2}$ ways.
After $X$ is fixed, the set $Y$ can be chosen in at most
$\binom{\phi+\psi-s}{t}$ ways. Consequently,
$$
\begin{aligned}
|\mathcal{F}_1|
\leq
2s\phi
\binom{\phi-2}{s-2}
\binom{\phi+\psi-s}{t}
=O(\phi^{s-1})\cdot O(\phi^t)
=O(\phi^{s+t-1}),
\end{aligned}
$$
where we have used $\psi=\Theta(\phi)$ and the fact that $s$ and $t$
are fixed.

We next establish the corresponding lower bound.
Choose an edge $uv\in E(H^*)$, an $(s-2)$-set
$A'\subseteq V(H^*)\setminus\{u,v\}$, and a $t$-set
$B\subseteq V(E_\psi)$. Put
$A:=A'\cup\{u,v\}.$
Since $G''=H^*\vee E_\psi$, every edge between $A$ and $B$ is present
in $G''$. Together with the edge $uv\in E(H^*)$, these edges contain
a copy of $K_{s,t}^+$ whose part of size $s$ is $A$, whose part of
size $t$ is $B$, and whose added edge is $uv$. Since
$B\subseteq V(E_\psi)$, every such copy belongs to $\mathcal{F}_1$.
The number of triples $(uv,A',B)$ is
$$
2s\phi
\binom{\phi-2}{s-2}
\binom{\psi}{t}
=\Theta(\phi^{s+t-1}),
$$
because $\psi=\Theta(\phi)$.
A fixed copy of $K_{s,t}^+$ can arise from at most a constant number
of such triples, where the constant depends only on $s$ and $t$.
Indeed, all vertices involved in such a triple belong to the fixed
vertex set of the copy, so the number of possible choices of the
distinguished edge $uv$ and the corresponding sets $A'$ and $B$ is
bounded by a constant depending only on $s$ and $t$. Therefore,
$|\mathcal{F}_1|=\Omega(\phi^{s+t-1}).$
Combining the upper and lower bounds, we conclude that
$|\mathcal{F}_1|=\Theta(\phi^{s+t-1}).$

\medskip
\noindent\textbf{Case 2: $Y_E = \varnothing$.}

In this case, $Y \subseteq V(H^*)$ and $H^*$ contains a complete bipartite subgraph $K_{X_H,Y}$.
Moreover, the number of ways to choose $Y$ from $V(H^*)$ is at most $\binom{\phi}{t} =\Theta(\phi^t)$.

Since $F[X]$ contains exactly one edge while $X_E \subseteq V(E_\psi)$ is an independent set,
we must have $|X_H|\geq1$.
Furthermore, the assumptions that $|Y|=t\geq2$ and that $H^*$ is $C_4$-free force $|X_H|=1$.
It follows that $|X_E|=s-1.$

Let $X_H = \{x\}$ for some $x \in V(H^*)$.
Since $Y\subseteq V(H^*)$ and $x$ is adjacent to all vertices of $Y$,
we must have $Y \subseteq N_{H^*}(x)$.
Recall that $H^*$ is $4s$-regular.
Then $d_{H^*}(x)=4s$. Consequently, once $x$ is chosen,
there are only $\binom{4s}{t}$ choices for $Y$.
Thus, we can choose $x \in V(H^*)$ in $\phi$ ways, $Y$ in $\binom{4s}{t}$ ways, and
$X_E \subseteq V(E_\psi)$ in $\binom{\psi}{s-1}$ ways.
Note that $t\geq2$. Therefore,
\[
|\mathcal{F}_2|
=O\left(\phi\binom{4s}{t}\binom{\psi}{s-1}\right)
=o(\phi^{s+t-1}).
\]

Since
$\mathcal{F}=\mathcal{F}_1\mathbin{\dot\cup}\mathcal{F}_2$,
we have
\[
N(K_{s,t}^{+},G'')
=
|\mathcal{F}|
=
|\mathcal{F}_1|+|\mathcal{F}_2|
=
\Theta(\phi^{s+t-1}).
\]

We have proved that
$N(K_{s,t}^{+},G'')=O(\phi^{s+t-1})$.
We next establish the corresponding lower bound for $G'$.
Recall that
$G'=H^*\vee E_{\psi-1}$,
where $H^*$ is a $4s$-regular graph on $\phi$ vertices. Hence
$e(H^*)=2s\phi.$
Moreover, since
$\phi-8s<\psi\leq\phi$,
we have
$\psi-1=\Theta(\phi)$
for sufficiently large $m$.
To construct a copy of $K_{s,t}^{+}$ in $G'$, first choose an edge
$uv\in E(H^*)$. There are $2s\phi$ choices for $uv$.
Next, choose an $(s-2)$-set
$A'\subseteq V(H^*)\setminus\{u,v\}$,
and a $t$-set
$B\subseteq V(E_{\psi-1})$.
Put
$A:=A'\cup\{u,v\}$.
Since $G'=H^*\vee E_{\psi-1}$, every edge between $A$ and $B$
is present in $G'$, and $uv\in E(H^*)$. Therefore, the edges
between $A$ and $B$, together with the edge $uv$, contain a copy
of $K_{s,t}^{+}$ whose added edge is $uv$.
The number of triples $(uv,A',B)$ is
$
2s\phi
\binom{\phi-2}{s-2}
\binom{\psi-1}{t}
=
\Theta\bigl(\phi^{s+t-1}\bigr).
$
A fixed copy of $K_{s,t}^{+}$ can arise from at most a constant
number of such triples, where the constant depends only on $s$
and $t$. Hence
$N(K_{s,t}^{+},G')=\Omega(\phi^{s+t-1})$.

Now let $G\in\mathbb{G}(m,H^*)$ be arbitrary. By the definition of
$\mathbb{G}(m,H^*)$, we have
$G'\subseteq G\subseteq G''$.
Therefore,
\[
N(K_{s,t}^{+},G')
\leq
N(K_{s,t}^{+},G)
\leq
N(K_{s,t}^{+},G'').
\]
Combining the lower bound for $G'$ with the upper bound for $G''$,
we obtain
$N(K_{s,t}^{+},G)
=\Theta(\phi^{s+t-1})$.
Finally, since
$\phi=\Theta(\sqrt{m}),$
it follows that
$N(K_{s,t}^{+},G)=\Theta(
m^{\frac{s+t-1}{2}}).
$
This proves statement {\rm (ii)}.
\end{proof}

\section{Properties of an $\varepsilon$-core subgraph $H$ of the counterexample graph $G$}\label{sec4}

We first introduce a key definition that underpins the subsequent arguments.

\begin{definition}\label{def3.1G}
Let $\varepsilon\in(0,\frac15)$ be a constant, and define
$\Phi(G):=\rho(G)\big/\sqrt{e(G)}$.
A subgraph $G'$ of $G$ with no isolated vertices is called an
{\bf \emph{$\varepsilon$-dense subgraph}} of $G$ if
$$(1\!-\!\varepsilon) e(G)<e(G')<e(G)~~
\text{and}~~\Phi(G')\!-\!\Phi(G)\geq\frac{\varepsilon\big(e(G)\!-\!e(G')\big)}{2e(G)}.$$
An {\bf \emph{$\varepsilon$-core}} is a graph with no isolated vertices
and no $\varepsilon$-dense subgraphs.
\end{definition}

By Lemma \ref{lem2.5G},
the order of the leading term $m^{\frac{s+t-1}{2}}$ in Theorem \ref{thm1.1} is best possible.

In the remainder of the proof, we show that
$N(K_{s,t}^+,G)=\Omega(m^{\frac{s+t-1}{2}})$
for every sufficiently large $m$ and every $m$-edge graph $G$
with $\delta(G)\geq1$ and $\rho(G)>g_{s-1}(m)$.
For convenience, we fix the following hierarchy of constants:
\begin{align}\label{align-7G}
0<\varepsilon \ll \varepsilon_{0}\ll \varepsilon_{1}
\ll \varepsilon_{2}\ll \varepsilon_3\ll 1.
\end{align}
Here, the relation $\varepsilon \ll \eta$ means that $\varepsilon$
is chosen sufficiently small as a function of $\eta$ so that all
inequalities appearing below hold.
Suppose, for contradiction, that the desired conclusion fails.
Then there exist arbitrarily large integers $m$ and $m$-edge graphs
$G$ with $\delta(G)\geq1$ and
$\rho(G)>g_{s-1}(m)$
such that
\begin{align}\label{align-6G}
N(K_{s,t}^+,G)
<\varepsilon_1 m^{\frac{s+t-1}{2}}.
\end{align}

In the proof of Theorem \ref{thm1.1},
we seek an \(\varepsilon_0\)-core subgraph $H$ of $G$
such that $e(H)>(1-\varepsilon_0)e(G)$.
So, $H$ contains no \(\varepsilon_0\)-dense subgraphs.
We proceed by constructing a sequence of graphs $G_{(1)}\supset G_{(2)}\supset\cdots\supset G_{(\ell)}$,
where $G_{(1)}=G$ and $G_{(i+1)}$ is an $\varepsilon_{0}$-dense subgraph of $G_{(i)}$ for $1\leq i\leq \ell-1$.
The process stops as soon as either
\(e(G_{(1)})-e(G_{(\ell)})\geq2\lfloor\varepsilon_{0} m\rfloor\)
or \(G_{(\ell)}\) contains no \(\varepsilon_{0}\)-dense subgraphs.

\begin{lem}\label{Lem4.1}
We have $e(G_{(1)})-e(G_{(\ell)})<\lfloor\varepsilon_{0} m\rfloor$.
\end{lem}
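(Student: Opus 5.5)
Assume for contradiction that $e(G_{(1)})-e(G_{(\ell)})\geq\lfloor\varepsilon_0 m\rfloor$. Each step removes fewer than $\varepsilon_0 e(G_{(i)})\le \varepsilon_0 m$ edges, so the process can overshoot $\lfloor\varepsilon_0 m\rfloor$ by less than about $\varepsilon_0 m$. Hence the total deletion stays below roughly $2\varepsilon_0 m$, and every $G_{(i)}$ has $e(G_{(i)})\ge(1-2\varepsilon_0)m$ edges, with $G_{(\ell)}$ in particular.

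Telescope the defining inequality of $\varepsilon_0$-dense subgraphs:
\[
\Phi(G_{(\ell)})-\Phi(G_{(1)})\ \ge\ \sum_{i=1}^{\ell-1}\frac{\varepsilon_0\big(e(G_{(i)})-e(G_{(i+1)})\big)}{2e(G_{(i)})}\ \ge\ \frac{\varepsilon_0\big(e(G_{(1)})-e(G_{(\ell)})\big)}{2m}\ \ge\ \frac{\varepsilon_0\lfloor\varepsilon_0 m\rfloor}{2m}\approx\frac{\varepsilon_0^2}{2}.
\]
The hypothesis $\rho(G)>g_{s-1}(m)\ge\sqrt m$ gives $\Phi(G)\ge1$, so $\Phi(G_{(\ell)})\ge 1+\varepsilon_0^2/3$. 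Setting $m_\ell:=e(G_{(\ell)})$, this yields
\[
\rho(G_{(\ell)})\ge\sqrt{(1+\varepsilon_0^2/3)\,m_\ell},
\]
where $m_\ell\ge(1-2\varepsilon_0)m$ is still sufficiently large.

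Apply Lemma~\ref{thm2.2G} to $F=K_{s,t}^+$, which has $f=s+t$ vertices and $\chi(F)=3$, so $r=2$. Since $\sqrt{(1+\varepsilon_0^2/3)m_\ell}>\sqrt{(\tfrac12+\varepsilon')2m_\ell}=\sqrt{(1+2\varepsilon')m_\ell}$ for a suitable $\varepsilon'=\varepsilon_0^2/12$, the lemma supplies $\delta=\delta(F,\varepsilon')>0$ with
\[
N(K_{s,t}^+,G)\ \ge\ N(K_{s,t}^+,G_{(\ell)})\ \ge\ \delta\, m_\ell^{(s+t)/2}\ =\ \Omega\big(m^{(s+t)/2}\big).
\]
This is far larger than $\varepsilon_1 m^{(s+t-1)/2}$, which contradicts \eqref{align-6G} for large $m$.

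The main point to check is the overshoot control in the first step: the final step could remove up to nearly $\varepsilon_0 e(G_{(\ell-1)})$ edges, so the edge count must be tracked to guarantee that $m_\ell$ remains linear in $m$. This holds because the process stops at the first index where the cumulative deletion reaches $2\lfloor\varepsilon_0 m\rfloor$, and the preceding graph had lost less than that. The only other care needed is using $e(G_{(i)})\le m$ in the denominators of the telescoped sum, which gives the stated lower bound directly.
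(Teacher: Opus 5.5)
Your proof is correct and follows the paper's argument. You telescope the $\varepsilon_0$-dense increments to get $\Phi(G_{(\ell)})\ge 1+\Omega(\varepsilon_0^2)$, use the stopping rule to keep $e(G_{(\ell)})$ linear in $m$, and invoke Lemma~\ref{thm2.2G} with $r=2$ to contradict \eqref{align-6G}; you apply the lemma directly where the paper uses its contrapositive. One cosmetic correction, which is immaterial: the stopping threshold is $2\lfloor\varepsilon_0 m\rfloor$ and the last step removes fewer than $\varepsilon_0 m$ edges, so your first paragraph should give $e(G_{(\ell)})>(1-3\varepsilon_0)m$ rather than $(1-2\varepsilon_0)m$.
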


\begin{proof}
Assume, for the sake of contradiction, that
$e(G_{(1)})-e(G_{(\ell)})\geq \lfloor \varepsilon_0 m\rfloor$.
For each $i\in\{1,\ldots,\ell-1\}$, since $e(G_{(i)})\leq m$
and $G_{(i+1)}$ is an $\varepsilon_0$-dense subgraph of $G_{(i)}$, it follows that
$$\Phi(G_{(i+1)})-\Phi(G_{(i)})
\geq\varepsilon_0\big(e(G_{(i)})\!-\!e(G_{(i+1)})\big)\Big/2e(G_{(i)})\geq
\frac{\varepsilon_0}{2m}\big(e(G_{(i)})-e(G_{(i+1)})\big).$$
Summing this inequality over $i=1,\ldots,\ell-1$ gives
$$\Phi(G_{(\ell)})-\Phi(G_{(1)})
\geq\frac{\varepsilon_0}{2m}
\sum_{i=1}^{\ell-1}\big(e(G_{(i)})-e(G_{(i+1)})\big)
=\frac{\varepsilon_0}{2m}
\big(e(G_{(1)})-e(G_{(\ell)})\big)
\geq\frac{\varepsilon_0}{2m}\lfloor \varepsilon_0 m\rfloor.$$
This yields $\Phi(G_{(\ell)})-\Phi(G_{(1)})>\frac14\varepsilon_0^2.$
Since $G_{(1)}=G$ and $\rho(G)>g_{s-1}(m)>\sqrt{m}$,
we know that $\Phi(G_{(1)})=\rho(G)/\sqrt m>1.$
Consequently,
\begin{equation}\label{align-08G}
\Phi(G_{(\ell)})
>1+\frac{1}{4}\varepsilon_0^2.
\end{equation}

On the other hand, the choice of $\ell$ implies
$e(G_{(1)})-e(G_{(\ell-1)})<2\lfloor\varepsilon_0 m\rfloor.$
Moreover, since $G_{(\ell)}$ is an $\varepsilon_0$-dense subgraph of $G_{(\ell-1)}$,
we have $e(G_{(\ell-1)})-e(G_{(\ell)})<\varepsilon_0\cdot e(G_{(\ell-1)})$.
Therefore,
$e(G_{(1)})-e(G_{(\ell)})<4\lfloor \varepsilon_0 m\rfloor$.
Note that $e(G_{(1)})=m$ and $\varepsilon_0$ is sufficiently small.
Thus, we obtain $\frac{m}{2}<e(G_{(\ell)})<m$.
Combining this estimate with \eqref{align-6G} yields
\[
N\big(K_{s,t}^+,G_{(\ell)}\big)
\leq N\big(K_{s,t}^+,G\big)
=o\big(m^{\frac{s+t}{2}}\big)
=o\big(\big(e(G_{(\ell)})\big)^{\frac{s+t}{2}}\big).
\]
By Lemma~\ref{thm2.2G}, we obtain
$\rho(G_{(\ell)})\leq\sqrt{\big(1+o(1)\big)e(G_{(\ell)})}.$
Thus, $\Phi(G_{(\ell)})=\rho(G_{(\ell)})\big/\sqrt{e(G_{(\ell)})}< 1 + \frac{1}{8}\varepsilon_0^2$,
which contradicts \eqref{align-08G}.
We thus conclude that
$e(G_{(1)})-e(G_{(\ell)})<\lfloor \varepsilon_0 m\rfloor$.
\end{proof}

In what follows, we focus on the structural analysis of the terminal graph $G_{(\ell)}$, rather than that of the original graph $G$.
For convenience, set $H:=G_{(\ell)}$, $h:=e(H)$ and $\rho:=\rho(H)$.
By Lemma \ref{Lem4.1} and the definition of $H$,
the graph $H$ contains no \(\varepsilon_{0}\)-dense subgraphs;
in other words, $H$ is an $\varepsilon_0$-core subgraph of $G$.
By the Perron--Frobenius theorem, there exists a nonnegative unit eigenvector
$\mathbf{x}=(x_1,\ldots,x_n)^\top$
corresponding to the spectral radius $\rho$.
Let $u^*\in V(H)$ be a vertex satisfying $x_{u^*}=\max_{v\in V(H)}x_v$.
The graph $H$ enjoys several useful structural properties that need not hold for $G$, as established in Lemmas \ref{Lem4.2}--\ref{Lem4.12C}.

\begin{lem}\label{Lem4.2}
We have $\sqrt{h}<g_{s-1}(h)<\rho<\sqrt{(1+2\varepsilon)h}$.
\end{lem}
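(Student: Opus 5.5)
We need to prove $\sqrt{h}<g_{s-1}(h)<\rho<\sqrt{(1+2\varepsilon)h}$, where $H=G_{(\ell)}$, $h=e(H)$ and $\rho=\rho(H)$.

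The plan is to prove the three inequalities separately. The leftmost one, $\sqrt h<g_{s-1}(h)$, is purely analytic. Since $s\ge 3$, we have $a:=s-1\ge 2$, and $g_a(h)$ is the positive root of $x^2-(a-1)x-(h-\binom a2)$. Evaluating this quadratic at $x=\sqrt h$ gives $-(a-1)\sqrt h+\binom a2<0$ once $h$ is large. Since $h>m/2$ by the proof of Lemma~\ref{Lem4.1}, this holds, so $\sqrt h<g_{s-1}(h)$.

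For the middle inequality $g_{s-1}(h)<\rho$, I would exploit the $\Phi$-monotonicity built into the construction. Each step $G_{(i)}\to G_{(i+1)}$ is $\varepsilon_0$-dense, so $\Phi(G_{(i+1)})\ge\Phi(G_{(i)})$. Telescoping gives $\Phi(H)\ge\Phi(G)$, that is, $\rho\ge \rho(G)\sqrt{h/m}>g_{s-1}(m)\sqrt{h/m}$. (If $\ell=1$ then $H=G$ and the claim is immediate.) It therefore suffices to check that $x\mapsto g_{s-1}(x)/\sqrt x$ is nonincreasing for large $x$, because then $h<m$ gives $g_{s-1}(m)/\sqrt m\le g_{s-1}(h)/\sqrt h$. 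Writing $g_a(x)/\sqrt x=\frac{a-1}{2\sqrt x}+\frac12\sqrt{4-(a^2-1)/x}$, the first term strictly decreases. The second term increases, but with derivative only $O(x^{-2})$, whereas the first has derivative of order $x^{-3/2}$. So for large $x$ the ratio is strictly decreasing, and the inequality follows. In fact the dense-step increment gives strictness directly whenever $\ell\ge 2$.

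For the upper bound $\rho<\sqrt{(1+2\varepsilon)h}$, I would argue as in the proof of Lemma~\ref{Lem4.1}. From $N(K_{s,t}^+,H)\le N(K_{s,t}^+,G)<\varepsilon_1 m^{(s+t-1)/2}=o(h^{(s+t)/2})$ and the bound $h>m/2$, Lemma~\ref{thm2.2G} applies with $F=K_{s,t}^+$ (so $r=2$, $\sqrt{(1-\frac12+\varepsilon')2h}=\sqrt{(1+2\varepsilon')h}$). Choosing the Lemma's parameter equal to $\varepsilon$ (more precisely, the hierarchy constant appearing in the statement), any $H$ with $\rho\ge\sqrt{(1+2\varepsilon)h}$ would contain at least $\delta h^{(s+t)/2}$ copies. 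This contradicts the $o(\cdot)$ bound for $m$ large, so $\rho<\sqrt{(1+2\varepsilon)h}$.

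The main obstacle is the middle inequality: ensuring that passing to the subgraph does not lose the constant-order correction $(s-2)/2$ in the threshold. The monotonicity of $g_{s-1}(x)/\sqrt x$ handles this. I would double-check the sign of the correction term carefully, since the $(a-1)/(2\sqrt x)$ term dominates only for large $x$, and that requires $h$ to be large, which again comes from $h>m/2$.
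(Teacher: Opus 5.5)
Your two outer inequalities are fine. Evaluating $x^2-(s-2)x-\bigl(h-\binom{s-1}{2}\bigr)$ at $x=\sqrt h$ gives $\sqrt h<g_{s-1}(h)$. The upper bound via Lemma~\ref{thm2.2G} with $r=2$ is exactly what the paper does.

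The gap is in the middle inequality $g_{s-1}(h)<\rho$, where your monotonicity reduction points the wrong way. Put $f(x):=g_{s-1}(x)/\sqrt x$. You have $\Phi(H)\ge\Phi(G)>f(m)$, and you need $\Phi(H)>f(h)$; for that you would need $f(m)\ge f(h)$. But, as you compute yourself, $f$ is decreasing for large $x$, so $h<m$ gives $f(h)\ge f(m)$. The normalized threshold goes \emph{up} when you pass to the smaller graph, so the inequality $f(m)\le f(h)$ you derive does not help.

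Quantitatively, $\Phi(H)\ge\Phi(G)$ alone only yields $\rho\ge\rho(G)\sqrt{h/m}\gtrsim\sqrt h+\frac{s-2}{2}\sqrt{h/m}$. This falls short of $g_{s-1}(h)\approx\sqrt h+\frac{s-2}{2}$ by about $\frac{s-2}{4}\cdot\frac{m-h}{m}$, which is a positive constant when $m-h$ is of order $\varepsilon_0 m$. So the constant-order correction $(s-2)/2$ really is at risk, and the monotonicity of $f$ is the obstacle, not the remedy.

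What is missing is the quantitative use of the $\varepsilon_0$-density increment, which you invoke only as a source of strictness. Summing the defining inequality over the steps and using $e(G_{(i)})\le m$ gives $\Phi(H)\ge\Phi(G)+\frac{\varepsilon_0(m-h)}{2m}$. By the mean value theorem, $f(h)-f(m)=-f'(\xi)(m-h)$ for some $\xi\in(h,m)$, and $-f'(\xi)=\frac{s-2}{4\xi^{3/2}}+O(\xi^{-2})$. Since $\xi>h>m/2$, this is $O(m^{-3/2})<\frac{\varepsilon_0}{2m}$ for large $m$. Hence $\Phi(H)>f(m)+\frac{\varepsilon_0(m-h)}{2m}>f(h)$ whenever $h<m$, i.e.\ $\rho>g_{s-1}(h)$.

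This is precisely the paper's argument. The gain from the dense steps is of order $(m-h)/m$, while the threshold ratio rises only by order $(m-h)/m^{3/2}$, so the increment absorbs the loss.
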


\begin{proof}
By Lemma~\ref{Lem4.1}, we have $m-h < \varepsilon_0 m$. Since $\varepsilon_0 \ll 1$, it follows that
\begin{align}\label{ali-01}
\frac{m}{2} < h \le m.
\end{align}

It remains to verify the desired lower bound on $\rho(H)$. If $\ell=1$, then $H=G_{(1)}=G$, and the required inequality follows directly from the assumption on $G$.
Suppose now that $\ell \ge 2$.
Since each $G_{(i+1)}$ is an $\varepsilon_0$-dense subgraph of $G_{(i)}$, we have
\[
\Phi(G_{(i+1)})-\Phi(G_{(i)})
\ge
\frac{\varepsilon_0\big(e(G_{(i)})-e(G_{(i+1)})\big)}{2e(G_{(i)})}
\geq
\frac{\varepsilon_0\big(e(G_{(i)})-e(G_{(i+1)})\big)}{2m}
\]
for every $i \in [\ell-1]$. Summing this inequality over all $i \in [\ell-1]$ yields
\begin{equation}\label{eq:phi-lower}
\Phi(H) = \Phi(G_{(\ell)})
\geq  \Phi(G_{(1)}) +\sum_{i=1}^{\ell-1}\frac{\varepsilon_0\big(e(G_{(i)})-e(G_{(i+1)})\big)}{2m}
 =\Phi(G_{(1)}) + \frac{\varepsilon_0 (m-h)}{2m}.
\end{equation}

To show that $\Phi(H) >\frac{s-2+\sqrt{4h-(s-1)^2+1}}{2\sqrt{h}}$,
let us define the auxiliary function
$$f_{s-1}(x) = \frac{s-2+\sqrt{4x-(s-1)^2+1}}{2\sqrt{x}}$$
for $x > \frac{(s-1)^2-1}{4}$.
By our assumption on $G = G_{(1)}$,
we have $\Phi(G_{(1)}) > f_{s-1}(m)$.
Thus, from \eqref{eq:phi-lower}, it suffices to show that
\begin{equation}\label{eq:g-ineq}
f_{s-1}(m) + \frac{\varepsilon_0 (m-h)}{2m} > f_{s-1}(h).
\end{equation}
Since $\ell\geq 2$ and each step deletes at least one edge, we have $h < m$.
Then there exists some $\xi \in (h, m)$ such that
\begin{equation}\label{eq:mvt}
f_{s-1}(h) - f_{s-1}(m) = -f_{s-1}'(\xi)(m-h).
\end{equation}
A direct computation of the derivative of $f_{s-1}(x)$ yields
\[
f_{s-1}'(x) = -\frac{s-2}{4x^{\frac32}} +\frac{(s-1)^2-1}{8x^2\sqrt{1 - \frac{(s-1)^2-1}{4x}}}
 = -\frac{s-2}{4x^{\frac32}} + O(x^{-2}).
\]
Since $h > m/2$ and $\xi \in (h, m)$, we have $\xi > m/2$. Therefore,
$$-f_{s-1}'(\xi) = \frac{s-2}{4\xi^{\frac{3}{2}}} + O(\xi^{-2})< \frac{\varepsilon_0}{2m},$$
as $m$ is sufficiently large.
Substituting this back into \eqref{eq:mvt} gives
$$f_{s-1}(h) - f_{s-1}(m) = -f_{s-1}'(\xi)(m-h) < \frac{\varepsilon_0 (m-h)}{2m},$$
which proves \eqref{eq:g-ineq}. Consequently, we obtain
$\Phi(H) > f_{s-1}(h) = \frac{s-2+\sqrt{4h-(s-1)^2+1}}{2\sqrt{h}},$
which is equivalent to $\rho> g_{s-1}(h)$.
Furthermore, by \eqref{equ-001}, we have $g_{s-1}(h)>\sqrt{h}$.

By \eqref{align-6G} and \eqref{ali-01}, we have
\begin{align}\label{align-002T}
N(K_{s,t}^+,H)
\leq N(K_{s,t}^+,G)
< \varepsilon_1 m^{\frac{s+t-1}2}
\leq 2^{\frac{s+t-1}2}\varepsilon_1 h^{\frac{s+t-1}2}.
\end{align}
Then
$N(K_{s,t}^+,H)=o\bigl(h^{(s+t)/2}\bigr)$.
Since $\chi(K_{s,t}^+)=3$, Lemma~\ref{thm2.2G} implies that, for sufficiently large $h$,
$\rho<\sqrt{(1+2\varepsilon)h};$
otherwise, there would exist a constant $\delta>0$ such that
$N(K_{s,t}^+,H)\geq \delta h^{(s+t)/2},$
contradicting \eqref{align-002T}.
This  completes the proof.
\end{proof}

\begin{lem}\label{Lem4.3}
For any edge $uv\in E(H)$, we have
$x_{u}x_{v}> \frac{1-\varepsilon_0}{4\sqrt{h}}.$
\end{lem}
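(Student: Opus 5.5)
Suppose, for contradiction, that some edge $uv\in E(H)$ satisfies $x_ux_v\le \frac{1-\varepsilon_0}{4\sqrt h}$. Let $H':=H-uv$ with its isolated vertices removed. We will show that $H'$ is an $\varepsilon_0$-dense subgraph of $H$, contradicting the fact that $H$ is an $\varepsilon_0$-core. Clearly $e(H')=h-1$, and $(1-\varepsilon_0)h<h-1<h$ because $h>m/2$ is large. So it remains to verify the spectral condition
\[
\Phi(H')-\Phi(H)\ge \frac{\varepsilon_0}{2h},
\qquad\text{that is,}\qquad
\frac{\rho(H')}{\sqrt{h-1}}\ge \frac{\rho}{\sqrt h}+\frac{\varepsilon_0}{2h}.
\]

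For the spectral radius, we use the Rayleigh quotient with the Perron vector $\mathbf{x}$ of $H$. Deleting isolated vertices does not change the spectral radius, and $\mathbf{x}$ is a unit vector, so
\[
\rho(H')\ge \mathbf{x}^\top A(H-uv)\mathbf{x}=\rho-2x_ux_v\ge \rho-\frac{1-\varepsilon_0}{2\sqrt h}.
\]
On the other side, $\frac{1}{\sqrt{h-1}}=\frac{1}{\sqrt h}\bigl(1+\frac{1}{2h}+O(h^{-2})\bigr)$. Therefore
\[
\frac{\rho(H')}{\sqrt{h-1}}\ge\Bigl(\frac{\rho}{\sqrt h}-\frac{1-\varepsilon_0}{2h}\Bigr)\Bigl(1+\frac1{2h}+O(h^{-2})\Bigr)
=\frac{\rho}{\sqrt h}+\frac{\rho}{2h^{3/2}}-\frac{1-\varepsilon_0}{2h}+O(h^{-2}).
\]
By Lemma~\ref{Lem4.2} we have $\rho>\sqrt h$, so $\frac{\rho}{2h^{3/2}}>\frac{1}{2h}$. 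The right-hand side is then at least $\frac{\rho}{\sqrt h}+\frac{\varepsilon_0}{2h}+O(h^{-2})$.

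The only delicate point is this last step: the $O(h^{-2})$ error has to be absorbed. Here the strictness in Lemma~\ref{Lem4.2} helps. Since $\rho>g_{s-1}(h)=\sqrt h+\frac{s-2}{2}+O(h^{-1/2})$ and $s\ge3$, the gain is a full order larger: $\frac{\rho}{2h^{3/2}}-\frac{1}{2h}\ge \frac{s-2}{4h^{3/2}}-O(h^{-2})$, which is positive of order $h^{-3/2}$. This dominates the $O(h^{-2})$ term for large $h$. We must also check that the product of the two error terms, $\frac{1-\varepsilon_0}{2h}\cdot\frac{1}{2h}$, is $O(h^{-2})$, which it is. Together with $\rho\le\sqrt{2h}$ from Lemma~\ref{lem2.8}, this keeps all errors uniform.

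Hence $\Phi(H')-\Phi(H)\ge\frac{\varepsilon_0}{2h}=\frac{\varepsilon_0(e(H)-e(H'))}{2e(H)}$, so $H'$ is an $\varepsilon_0$-dense subgraph of $H$. This contradicts the choice of $H$ and proves $x_ux_v>\frac{1-\varepsilon_0}{4\sqrt h}$ for every edge $uv\in E(H)$.
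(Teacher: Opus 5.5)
Your argument is correct and is essentially the paper's: delete $uv$, lower-bound $\rho(H-uv)$ by the Rayleigh quotient of $\mathbf{x}$, and conclude that $H-uv$ with its isolated vertices removed is an $\varepsilon_0$-dense subgraph of the $\varepsilon_0$-core $H$. The only difference is bookkeeping. The paper cross-multiplies and uses $\sqrt h-\sqrt{h-1}\ge \frac{1}{2\sqrt h}$ to get $\Phi(H')-\Phi(H)\ge \frac{\varepsilon_0}{2\sqrt{h(h-1)}}>\frac{\varepsilon_0}{2h}$ exactly, using only $\rho\ge\sqrt h$. Your expansion instead loses an $O(h^{-2})$ term, so your appeal to the stronger bound $\rho>g_{s-1}(h)$ to absorb it is valid but unnecessary.
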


\begin{proof}
Suppose to the contrary,
then there exists an edge $uv\in E(H)$ such that
$x_{u}x_{v}\leq \frac{1-\varepsilon_0}{4\sqrt{h}}.$
Recall that $e(H)=h$ and $H=G_{(\ell)}$.
Let $G':=G_{(\ell)}-\{uv\}$.
Then we obtain
\begin{align}\label{align-01GP}
\rho(G_{(\ell)})&=\mathbf{x}^\top A(G_{(\ell)})\mathbf{x}=\mathbf{x}^\top A(G')\mathbf{x}\!+\!2x_{u}x_{v}
\leq \rho(G')\!+\!\frac{1\!-\!\varepsilon_0}{2\sqrt{h}}.
\end{align}
Since $e(G_{(\ell)})=e(G')+1$,
we have
\begin{align}\label{align-02GP}
\sqrt{e(G_{(\ell)})}-\sqrt{e(G')}=\frac{1}{\sqrt{e(G_{(\ell)})}\!+\!\sqrt{e(G')}}
\geq\frac{1}{2\sqrt{e(G_{(\ell)})}}.
\end{align}
Combining with \eqref{align-01GP}, \eqref{align-02GP} and $e(G')=e(G_{(\ell)})-1$ yields
\begin{align*}
\rho(G')\sqrt{e(G_{(\ell)})}\!-\!\rho(G_{(\ell)})\sqrt{e(G')}
\geq \rho(G_{(\ell)})\big(\sqrt{e(G_{(\ell)})}\!-\!\sqrt{e(G')}\big)\!-\!\frac{1\!-\!\varepsilon_0}{2}
\geq \frac{\rho(G_{(\ell)})}{2\sqrt{e(G_{(\ell)})}}\!-\!\frac{1\!-\!\varepsilon_0}{2}.
\end{align*}
By Lemma \ref{Lem4.2}, we have $\rho(G_{(\ell)})\geq \sqrt{h}$.
It follows that $\rho(G')\sqrt{e(G_{(\ell)})}\!-\!\rho(G_{(\ell)})\sqrt{e(G')}\geq\frac{\varepsilon_0}{2}.$
Note that $e(G')<e(G_{(\ell)})= h.$
Consequently,
\begin{align*}
\Phi(G')\!-\!\Phi(G_{(\ell)})
=\frac{\rho(G')\sqrt{e(G_{(\ell)})}
\!-\!\rho(G_{(\ell)})\sqrt{e(G')}}{\sqrt{e(G')e(G_{(\ell)})}}
\geq \frac{\varepsilon_0}{2h}
=\varepsilon_0\frac{e(G_{(\ell)})-e(G')}{2e(G_{(\ell)})}.
\end{align*}

Let $G''$ be obtained from $G'$ by deleting all isolated vertices.
Since deleting isolated vertices changes neither the number of edges nor the spectral radius, we have
$e(G'')=e(G')$ and $\rho(G'')=\rho(G')$,
and hence
$\Phi(G'')=\Phi(G').$
Moreover, since $e(G'')=e(G')=h-1$ and $h>m/2$, for sufficiently large $m$ we have
\[
(1-\varepsilon_0)h<h-1=e(G'')<h=e(G_{(\ell)}).
\]
Thus, by Definition~\ref{def3.1G}, $G''$ is an
$\varepsilon_0$-dense subgraph of $G_{(\ell)}$.
By Lemma \ref{Lem4.1}, we have
$e(G_{(1)})-e(G_{(\ell)})<\lfloor \varepsilon_0 m\rfloor$.
Since the left-hand side is an integer, it follows that
$e(G_{(1)})-e(G_{(\ell)})\leq \lfloor \varepsilon_0 m\rfloor-1.$
Therefore,
$$e(G_{(1)})-e(G'')=e(G_{(1)})-e(G')=e(G_{(1)})-e(G_{(\ell)})+1
\leq \lfloor \varepsilon_0 m\rfloor.$$
Thus, $G''$ could be taken as a further $\varepsilon_0$-dense subgraph
after $G_{(\ell)}$, while the total number of deleted edges would still
be less than $2\lfloor\varepsilon_0m\rfloor$. This contradicts the
termination of the construction at $G_{(\ell)}$.
This proves the lemma.
\end{proof}

The following lemma provides a lower bound on the eigenvector coordinates
for vertices of relatively small degrees.

\begin{lem}\label{Lem4.4}
For any vertex $u\in V(H)$ with $d_H(u)< \varepsilon_0 h$,
we have $2h x_u^2\geq (1-2\varepsilon_0)d_H(u).$
\end{lem}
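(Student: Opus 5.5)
The plan is to exploit that $H$ is an $\varepsilon_0$-core: deleting the star at $u$ must not produce an $\varepsilon_0$-dense subgraph. I will compare this with the loss in spectral radius, which can be computed exactly from the Perron vector $\mathbf{x}$. Write $d:=d_H(u)$. If $2hx_u^2\geq d$ there is nothing to prove, so I assume $x_u^2<\frac{d}{2h}<\frac{\varepsilon_0}{2}$ throughout. This keeps $x_u$ small, and all error terms will then be of relative order $\varepsilon_0$.

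\textbf{Step 1: the deleted graph is a valid competitor.} Let $G'$ be obtained from $H-u$ by removing isolated vertices. Then $e(G')=h-d$, and since $0<d<\varepsilon_0h$ we get $(1-\varepsilon_0)h<e(G')<h$; $G'$ is nonempty because $h$ is large. As $H$ has no $\varepsilon_0$-dense subgraph, Definition~\ref{def3.1G} forces
\[
\Phi(G')-\Phi(H)<\frac{\varepsilon_0 d}{2h}.
\]

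\textbf{Step 2: lower bound for $\rho(G')$.} Since $\rho x_u=\sum_{v\sim u}x_v$, we have
\[
\rho=\mathbf{x}^\top A(H)\mathbf{x}=\mathbf{x}^\top A(H-u)\mathbf{x}+2\rho x_u^2 .
\]
By the Rayleigh quotient, $\mathbf{x}^\top A(H-u)\mathbf{x}\le\rho(G')(1-x_u^2)$. Hence
\[
\rho(G')\ge\rho\,\frac{1-2x_u^2}{1-x_u^2}=\rho(1-y),\qquad y:=\frac{x_u^2}{1-x_u^2}.
\]
Dividing by $\sqrt{h-d}$ and using $(1-d/h)^{-1/2}\ge 1+\frac{d}{2h}+\frac{3d^2}{8h^2}$ gives
\[
\Phi(G')\ge\Phi(H)(1-y)\Bigl(1+\frac{d}{2h}+\frac{3d^2}{8h^2}\Bigr).
\]

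\textbf{Step 3: combine and solve for $x_u^2$.} Insert the bound of Step 2 into the inequality of Step 1. Then use $\Phi(H)>1$, which follows from Lemma~\ref{Lem4.2} via $\rho>\sqrt h$, to replace $\varepsilon_0/\Phi(H)$ by $\varepsilon_0$. This yields
\[
y\Bigl(1+\frac{d}{2h}+\frac{3d^2}{8h^2}\Bigr)>\frac{d}{2h}\,(1-\varepsilon_0)+\frac{3d^2}{8h^2}.
\]
Finally use $y\le x_u^2/(1-x_u^2)$ with $x_u^2<\frac{d}{2h}<\frac{\varepsilon_0}{2}$ to convert this into a lower bound on $2hx_u^2/d$.

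\textbf{Main obstacle.} The delicate point is the constant bookkeeping in Step 3. A crude treatment loses a factor of order $(1+\varepsilon_0)^2$ and only yields roughly $1-3\varepsilon_0$ instead of $1-2\varepsilon_0$. To recover the sharper constant I would keep the exact factor $(1-x_u^2)$ together with the second-order term of $(1-d/h)^{-1/2}$. I would also use that $x_u^2$ and $\frac{d}{2h}$ are comparable in the remaining case, so the quadratic error terms largely cancel. If this still falls short, the fallback is the spectral gap from Lemma~\ref{Lem4.2}: since $\rho\geq g_{s-1}(h)$, we have $\Phi(H)\ge 1+\frac{s-2}{2\sqrt h}$, which gives a little slack in replacing $\varepsilon_0/\Phi(H)$, and this is affordable because $d\ge1$ and $\varepsilon_0$ is fixed.
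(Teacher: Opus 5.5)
Your proposal is correct and is essentially the paper's argument. You delete $u$, use the restricted Perron vector to get $\rho(H-u)\ge\rho\bigl(1-\frac{x_u^2}{1-x_u^2}\bigr)$, and play this against the $\varepsilon_0$-core property together with $\Phi(H)>1$; the paper merely phrases it as a contradiction, assuming $2hx_u^2<(1-2\varepsilon_0)d_H(u)$ so that $\frac{1}{1-x_u^2}<1+\varepsilon_0$.

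The bookkeeping you worry about is harmless even at first order. With $\delta:=d/(2h)<\varepsilon_0/2$, your Step 3 gives $y(1+\delta)>(1-\varepsilon_0)\delta$, hence
$x_u^2=\frac{y}{1+y}>\frac{(1-\varepsilon_0)\delta}{1+(2-\varepsilon_0)\delta}\ge\frac{1-\varepsilon_0}{1+\varepsilon_0}\,\delta\ge(1-2\varepsilon_0)\delta$.
So neither the second-order term nor the spectral-gap fallback is needed.
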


\begin{proof}
Suppose to the contrary that there exists a vertex $u_0\in V(H)$ with
$d_H(u_0)<\varepsilon_0 h$ such that
$2h x_{u_0}^2<(1-2\varepsilon_0)d_H(u_0).$
By the definition of $H$, we know that $d_H(u_0)\geq 1$.
Then
\[
x_{u_0}^2
<
\frac{(1-2\varepsilon_0)d_H(u_0)}{2h}
<
\frac12\varepsilon_0(1-2\varepsilon_0)
<\frac{\varepsilon_0}{1+\varepsilon_0}.
\]
 Hence
$1-x_{u_0}^2>\frac{1}{1+\varepsilon_0}.$
Let $H_0=H-{u_0}$, and let $\mathbf{x}'$ be the restriction of $\mathbf{x}$ to
$V(H_0)$. By the Rayleigh quotient,
\[
\rho(H_0)
\geq
\frac{(\mathbf{x}')^\top A(H_0)\mathbf{x}'}
{(\mathbf{x}')^\top \mathbf{x}'}
=
\frac{\sum_{uv\in E(H_0)}2x_u x_v}{1-x_{u_0}^2}.
\]
Thus
$\sum_{uv\in E(H_0)}2x_u x_v \leq \rho(H_0)(1-x_{u_0}^2).$

On the other hand, using the eigenvalue equation at $u_0$, we have
\begin{align*}
\rho
&=\mathbf{x}^\top A(H)\mathbf{x}=\sum_{uv\in E(H_0)}2x_u x_v+2x_{u_0}\sum_{u\in N_H(u_0)}x_u \\
&=\sum_{uv\in E(H_0)}2x_u x_v+2\rho x_{u_0}^2
\leq\rho(H_0)(1-x_{u_0}^2)+2\rho x_{u_0}^2.
\end{align*}
Consequently,
$\rho(H_0)(1-x_{u_0}^2)\geq(1-2x_{u_0}^2)\rho.$
Since $1-x_{u_0}^2>\frac{1}{1+\varepsilon_0}$, we obtain
\[
\rho(H_0)
\geq
\rho\left(1-\frac{x_{u_0}^2}{1-x_{u_0}^2}\right)
\geq
\rho\bigl(1-(1+\varepsilon_0)x_{u_0}^2\bigr).
\]
Let $h_0=e(H_0)=h-d_H(u_0)$. Then
\begin{align}\label{align-10G}
\Phi(H_0)-\Phi(H)
=
\frac{\rho(H_0)\sqrt h-\rho\sqrt{h_0}}{\sqrt{hh_0}}
\geq
\frac{\rho}
{\sqrt{hh_0}}
\left(
\bigl(1-(1+\varepsilon_0)x_{u_0}^2\bigr)\sqrt h-\sqrt{h_0}
\right).
\end{align}
Since
$\sqrt h-\sqrt{h_0}
=\frac{d_H(u_0)}{\sqrt h+\sqrt{h_0}}
\geq\frac{d_H(u_0)}{2\sqrt h},$
we have
\begin{align*}
\bigl(1-(1+\varepsilon_0)x_{u_0}^2\bigr)\sqrt h-\sqrt{h_0}
 =\sqrt h-\sqrt{h_0}-(1+\varepsilon_0)x_{u_0}^2\sqrt h
\geq
\frac{d_H(u_0)}{2\sqrt h}
\left(
1-(1+\varepsilon_0)\frac{2h x_{u_0}^2}{d_H(u_0)}
\right).
\end{align*}
By our assumption,
\[
(1+\varepsilon_0)\frac{2h x_{u_0}^2}{d_H(u_0)}
<
(1+\varepsilon_0)(1-2\varepsilon_0)
=
1-\varepsilon_0-2\varepsilon_0^2
<
1-\varepsilon_0.
\]
Therefore,
$\bigl(1-(1+\varepsilon_0)x_{u_0}^2\bigr)\sqrt h-\sqrt{h_0}
\geq\frac{\varepsilon_0 d_H(u_0)}{2\sqrt h}.$
Combining this with \eqref{align-10G}, and using $\rho\geq \sqrt h$ and
$h_0<h$, we get
$\Phi(H_0)-\Phi(H)
\geq\frac{\varepsilon_0 d_H(u_0)}{2h}.$

Let $H_0^*$ be obtained from $H_0$ by deleting all isolated vertices.
Since deleting isolated vertices changes neither the number of edges nor the spectral radius, we have
$e(H_0^*)=e(H_0)$ and $\rho(H_0^*)=\rho(H_0)$,
and hence
$\Phi(H_0^*)=\Phi(H_0).$
Moreover,
$1\leq e(H)-e(H_0^*)
=d_H(u_0)
<\varepsilon_0 h.$
Thus,
$(1-\varepsilon_0)e(H)<e(H_0^*)<e(H),$
and
$
\Phi(H_0^*)-\Phi(H)
\geq
\frac{\varepsilon_0\bigl(e(H)-e(H_0^*)\bigr)}
     {2e(H)}.
$
Therefore, $H_0^*$ is an $\varepsilon_0$-dense subgraph of $H$.

Finally, by Lemma~\ref{Lem4.1},
$
e(G_{(1)})-e(G_{(\ell)})
<\lfloor\varepsilon_0 m\rfloor.$
Since $H=G_{(\ell)}$, $h\leq m$, and
$e(H)-e(H_0^*)<\varepsilon_0 h
\leq \varepsilon_0 m,$
we have
$e(H)-e(H_0^*)\leq\lfloor\varepsilon_0 m\rfloor.$
Consequently,
$e(G_{(1)})-e(H_0^*)<2\lfloor\varepsilon_0 m\rfloor.$
Thus, $H_0^*$ could be taken as a further $\varepsilon_0$-dense subgraph after
$G_{(\ell)}$, while the total number of deleted edges would still be less than
$2\lfloor\varepsilon_0 m\rfloor$.
This contradicts the termination of the construction at $G_{(\ell)}$.
Therefore, $2h x_u^2\geq(1-2\varepsilon_0)d_H(u)$
for every $u\in V(H)$ with $d_H(u)<\varepsilon_0 h$.
\end{proof}

By \eqref{align-002T}, we have
$N(K_{s,t}^+,H)=o\bigl(h^{(s+t)/2}\bigr).$
Applying Lemma~\ref{thm2.1G} to $H$, we obtain two disjoint subsets
$U_1,U_2\subseteq V(H)$ with $|U_1|\leq |U_2|$ such that
$d(H,K_{U_1,U_2})\leq \varepsilon h.$
We choose such a pair $(U_1,U_2)$ so that $d(H,K_{U_1,U_2})$ is minimized. Then
\begin{align}\label{align-003R}
(1-\varepsilon)h\leq |U_1||U_2|\leq (1+\varepsilon)h.
\end{align}
For a vertex $v\in V(H)$ and  a vertex subset $X\subseteq V(H)$ (possibly $v\notin X$),
we write $N_X(v):=N_H(v)\cap X$ and $d_X(v):=|N_X(v)|$.

\begin{lem}\label{Lem4.5}
Let $R = V(H) \setminus (U_1 \cup U_2)$.
Then the following statements hold:

{\rm (i)} For each $i\in\{1,2\}$ and every vertex $u\in U_i$, we have
$|N_{U_{3-i}}(u)|\ge \frac12 |U_{3-i}|$;

{\rm (ii)}
For every vertex $w\in R$ and each $i\in\{1,2\}$, we have $|N_{U_i}(w)|\leq \frac{1}{2}|U_i|$.
\end{lem}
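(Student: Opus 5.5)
Both parts will follow from the minimality of $d(H,K_{U_1,U_2})$: if a vertex violates the stated degree condition, moving it between $U_1$, $U_2$ and $R$ strictly decreases the edit distance.

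\emph{(i).} Suppose $u\in U_i$ satisfies $|N_{U_{3-i}}(u)|<\frac12|U_{3-i}|$. Consider the pair obtained by deleting $u$ from $U_i$, i.e.\ replace $U_i$ by $U_i\setminus\{u\}$ and keep $U_{3-i}$ unchanged. The edges of $H$ lying inside $U_1\cup U_2$ or inside $R$ that are not in $K_{U_1,U_2}$ still count as mismatches or stop counting; the only changes concern pairs $\{u,w\}$. Specifically:
\begin{itemize}
\item Non-edges $uw$ with $w\in U_{3-i}$ were counted in $d(H,K_{U_1,U_2})$ (missing edges of $K$); there are more than $\frac12|U_{3-i}|$ of them, and they are no longer counted.
\item Edges $uw$ with $w\in U_{3-i}$ were matched but now count as extra edges; there are fewer than $\frac12|U_{3-i}|$ of them.
\item Edges $uw$ with $w\notin U_{3-i}$ were already counted as extra edges and remain so.
\end{itemize}
Hence the distance strictly decreases. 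The new pair is still two disjoint subsets, with distance at most $\varepsilon h$. The ordering $|U_1|\le|U_2|$ may fail, but we can relabel; the minimality was over all pairs of disjoint subsets achieving distance $\le\varepsilon h$, so the relabelled pair is admissible. This contradicts the choice of $(U_1,U_2)$. One should also note that $U_i\setminus\{u\}$ is nonempty, which holds because $|U_1||U_2|\ge(1-\varepsilon)h$ with $h$ large; in any case an empty part would simply give the distance $h>\varepsilon h$, which is not what is happening here.

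\emph{(ii).} Suppose $w\in R$ has $|N_{U_i}(w)|>\frac12|U_i|$. Add $w$ to $U_{3-i}$. The pairs that change status are $\{w,v\}$ with $v\in U_i$. Before the move, every edge $wv$ was an extra edge, and there are more than $\frac12|U_i|$ of them; these become matched. Every non-edge $wv$ was fine before and now becomes a missing edge, and there are fewer than $\frac12|U_i|$ of them. Edges from $w$ to $U_{3-i}$ or to $R$ are counted as extra edges both before and after the move, since $U_{3-i}$ is independent in $K$. So the distance strictly decreases, which again contradicts minimality, after relabelling if the size order flips.

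The main point needing care is to check that the move-and-relabel respects exactly the class over which the minimum was taken: disjoint pairs with distance at most $\varepsilon h$, with $|U_1|\le|U_2|$ recovered by swapping names. Since the distance strictly decreases and the new sets remain disjoint, this is fine. No spectral information is needed.
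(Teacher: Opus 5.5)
Your proposal is correct and follows essentially the same argument as the paper. In (i) you move the offending vertex from $U_i$ into $R$, and in (ii) you move $w$ from $R$ into $U_{3-i}$. In each case the edit distance changes by $2|N_{U_{3-i}}(u)|-|U_{3-i}|<0$ (respectively $|U_i|-2|N_{U_i}(w)|<0$), which contradicts the minimal choice of $(U_1,U_2)$. Your remark that swapping labels restores $|U_1|\le|U_2|$, because $K_{U_1,U_2}=K_{U_2,U_1}$, is a detail the paper leaves implicit.
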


\begin{proof}
(i) Suppose, for contradiction, that the assertion fails.
We may assume that $i=1$, as the case $i=2$ is analogous.
Then there exists a vertex $u \in U_1$ such that
$|N_{U_2}(u)| < \frac12|U_2|$.

Move $u$ from $U_1$ to $R$, thereby replacing the partition
$(U_1,U_2,R)$ with
$(U_1\setminus\{u\},\,U_2,\,R\cup\{u\})$.
Let
$H_1=K_{U_1,U_2}$ and
$H_2=K_{U_1\setminus\{u\},U_2}$.
The only vertex pairs whose adjacency status differs in $H_1$ and $H_2$
are the pairs $\{u,v\}$ with $v\in U_2$: each such pair is an edge of
$H_1$ but not of $H_2$.
To compute the change in the size of the symmetric difference, we consider
the contribution of each such pair $\{u,v\}$:

\begin{itemize}
    \item If $v\in N_{U_2}(u)$, then $uv$ is an edge of both $H$ and $H_1$,
    but not of $H_2$. Hence, $uv$ contributes $0$ to $d(H,H_1)$ and $1$
    to $d(H,H_2)$, and therefore contributes $+1$ to
    $d(H,H_2)-d(H,H_1)$.

    \item If $v\in U_2\setminus N_{U_2}(u)$, then $uv$ is not an edge of
    either $H$ or $H_2$, but is an edge of $H_1$. Hence, $uv$ contributes
    $1$ to $d(H,H_1)$ and $0$ to $d(H,H_2)$, and therefore contributes
    $-1$ to $d(H,H_2)-d(H,H_1)$.
\end{itemize}
Since the status of all other vertex pairs remains unchanged, we have
\[
d(H,H_2)-d(H,H_1)
=
|N_{U_2}(u)|-\bigl(|U_2|-|N_{U_2}(u)|\bigr)
=
2|N_{U_2}(u)|-|U_2|<0.
\]
This contradicts the minimality of $d(H,K_{U_1,U_2})$.
Hence, we must have $|N_{U_{3-i}}(u)| \ge \frac{1}{2}|U_{3-i}|$ for $i=1,2$.

\medskip

(ii) We proceed by contradiction.
We may assume that
$|N_{U_2}(w)|>\frac{1}{2}|U_2|$
for some vertex $w\in R$, as the case involving $U_1$ is analogous.

Move $w$ from $R$ to $U_1$, thereby replacing the partition
$(U_1,U_2,R)$ with
$(U_1\cup\{w\},\,U_2,\,R\setminus\{w\})$.
Let
$H_1=K_{U_1,U_2}$ and
$H_3=K_{U_1\cup\{w\},U_2}$.
The only vertex pairs whose adjacency status differs in $H_1$ and $H_3$
are the pairs $\{w,v\}$ with $v\in U_2$: each such pair is an edge of
$H_3$ but not of $H_1$.
To compute the change in the size of the symmetric difference, we consider
the contribution of each such pair $\{w,v\}$:
\begin{itemize}
    \item If $v\in N_{U_2}(w)$, then $wv$ is an edge of both $H$ and $H_3$,
    but not of $H_1$. Hence, $wv$ contributes $1$ to $d(H,H_1)$ and $0$
    to $d(H,H_3)$, and therefore contributes $-1$ to
   $d(H,H_3)-d(H,H_1)$.

    \item If $v\in U_2\setminus N_{U_2}(w)$, then $wv$ is not an edge of
    either $H$ or $H_1$, but is an edge of $H_3$. Hence, $wv$ contributes
    $0$ to $d(H,H_1)$ and $1$ to $d(H,H_3)$, and therefore contributes
    $+1$ to $d(H,H_3)-d(H,H_1)$.
\end{itemize}
All other vertex pairs make the same contribution to $d(H,H_1)$ and
$d(H,H_3)$. Consequently,
\begin{align*}
d(H,H_3)-d(H,H_1)
=\bigl(|U_2|-|N_{U_2}(w)|\bigr)-|N_{U_2}(w)|
=|U_2|-2|N_{U_2}(w)|
<0.
\end{align*}
This contradicts the choice of $(U_1,U_2)$ as a pair minimizing
$d(H,K_{U_1,U_2})$. By symmetry, the same argument applies if
$|N_{U_1}(w)|>\frac{1}{2}|U_1|$.
Therefore, for every vertex $w\in R$ and each $i\in\{1,2\}$,
we have
$|N_{U_i}(w)|\leq \frac{1}{2}|U_i|$.
\end{proof}

\begin{lem}\label{Lem4.6}
For each $i\in \{1,2\}$ and each integer $j$ with $1\leq j\leq 10$, define $$S_i^{(j)}=\{u\in U_i:|N_{U_{3-i}}(u)|\leq (1-j\varepsilon_2)|U_{3-i}|\}.$$
Then $|S_i^{(j)}|\leq \varepsilon_2 |U_i|$.
\end{lem}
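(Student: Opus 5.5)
Since $S_i^{(j)}\subseteq S_i^{(1)}$ for every $1\le j\le 10$, it suffices to bound $|S_i^{(1)}|$. The plan is a direct double count of the edges of $K_{U_1,U_2}$ that are missing from $H$. Each such missing pair lies in $E(K_{U_1,U_2})\setminus E(H)$, so it contributes to $d(H,K_{U_1,U_2})$. By the choice of $(U_1,U_2)$ we have $d(H,K_{U_1,U_2})\le\varepsilon h$.

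Fix $i\in\{1,2\}$. Every vertex $u\in S_i^{(j)}$ is non-adjacent to at least $j\varepsilon_2|U_{3-i}|\ge\varepsilon_2|U_{3-i}|$ vertices of $U_{3-i}$. The corresponding pairs $\{u,v\}$ with $v\in U_{3-i}\setminus N_{U_{3-i}}(u)$ are distinct for distinct $u$, since $u$ is the unique endpoint in $U_i$. Hence
\[
|S_i^{(j)}|\,\varepsilon_2|U_{3-i}|\le |E(K_{U_1,U_2})\setminus E(H)|\le d(H,K_{U_1,U_2})\le \varepsilon h .
\]

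Next I would use the lower bound in \eqref{align-003R}, namely $h\le |U_1||U_2|/(1-\varepsilon)$, to convert $\varepsilon h$ into a multiple of $|U_i||U_{3-i}|$. Dividing by $\varepsilon_2|U_{3-i}|$ then gives
\[
|S_i^{(j)}|\le \frac{\varepsilon}{(1-\varepsilon)\varepsilon_2}\,|U_i| .
\]

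Finally, I would invoke the constant hierarchy \eqref{align-7G}: since $\varepsilon\ll\varepsilon_2$, we may assume $\varepsilon\le\varepsilon_2^2/2$. This yields $\varepsilon/((1-\varepsilon)\varepsilon_2)\le\varepsilon_2$, and hence $|S_i^{(j)}|\le\varepsilon_2|U_i|$.

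There is no real obstacle here. The only points needing care are that the missing pairs are counted without repetition, and that the requirement $\varepsilon\le \varepsilon_2^2/2$ is compatible with the hierarchy fixed in \eqref{align-7G}, which it is, since $\varepsilon$ may be chosen arbitrarily small relative to $\varepsilon_2$. Note also that $U_{3-i}\neq\varnothing$, since $|U_1||U_2|\ge(1-\varepsilon)h>0$, so the division is legitimate.
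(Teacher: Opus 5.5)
Your argument is correct and is essentially the paper's proof. The paper also double-counts the cross pairs between $U_i$ and $U_{3-i}$ that are missing from $H$, and compares this count with $d(H,K_{U_1,U_2})\le\varepsilon h$ and $|U_1||U_2|\ge(1-\varepsilon)h$; it just phrases this as a contradiction from assuming $|S_i^{(j)}|>\varepsilon_2|U_i|$, while you derive the bound directly and make the hierarchy step $\varepsilon\ll\varepsilon_2$ explicit via $\varepsilon\le\varepsilon_2^2/2$.
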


\begin{proof}
Fix an integer $j$ with $1\leq j\leq10$.
We prove only the case
$i=1$, since the case $i=2$ is symmetric.
Suppose to the contrary that $|S_1^{(j)}|>\varepsilon_2 |U_1|.$
By the definition of $S_1^{(j)}$, every vertex $u\in S_1^{(j)}$ misses at least
$j\varepsilon_2 |U_2|$ vertices in $U_2$.
Summing over all vertices in $S_1^{(j)}$,
we find that the number of non-edges between $U_1$ and $U_2$ is at least
$$
\sum_{u\in S_1^{(j)}}\bigl(|U_2|-|N_{U_2}(u)|\bigr)
\geq j\varepsilon_2 |U_2|\, |S_1^{(j)}|
> j\varepsilon_2^2 |U_1||U_2|.
$$
Each such non-edge belongs to the symmetric difference between $H$ and $K_{U_1,U_2}$.
We conclude that
$d(H,K_{U_1,U_2})> j\varepsilon_2^2 |U_1||U_2|.$
By \eqref{align-003R},
we obtain
$d(H,K_{U_1,U_2})> j\varepsilon_2^2(1-\varepsilon)h.$
Since $\varepsilon\ll \varepsilon_2$, we have $d(H,K_{U_1,U_2})>\varepsilon h,$
contradicting $d(H,K_{U_1,U_2})\leq \varepsilon h$.
Thus
$|S_1^{(j)}|\leq \varepsilon_2 |U_1|.$
The same argument gives
$|S_2^{(j)}|\leq \varepsilon_2 |U_2|.$
This completes the proof of Lemma \ref{Lem4.6}.
\end{proof}

For each $i\in\{1,2\}$ and each integer $j$ with $1\leq j\leq10$,
let $U_i^{(j)}:=U_i\setminus S_i^{(j)}$.
Let $H[S,T]$ be the bipartite subgraph on the vertex set $S\cup T$
which consists of all edges with one endpoint in $S$ and the other in $T$.
For brevity, we write $E(S)=E(H[S])$, $E(S,T)=E(H[S,T])$, $e(S)=e(H[S])$ and $e(S,T)=e(H[S,T])$.

\begin{lem}\label{Lem4.7B}
Let $j$ be an integer with $1\leq j\leq 10$, $u_1u_2 \in E(H)$ satisfying $|N_H(u_1)\cap N_H(u_2)\cap U_2^{(j)}| \geq \varepsilon_2|U_2|$,
and let $\mathcal{F}(u_1u_2)$ denote the family of all copies $F$ of $K_{s,t}^{+}$ in $H$ such that $u_1u_2\in E(F)$ and $F-\{u_1u_2\}\cong K_{s,t}$.
Then there exists a positive constant $c_{s,t,\varepsilon_2}$, depending only on $s$, $t$ and $\varepsilon_2$, such that
\[
|\mathcal{F}(u_1u_2)| \ge c_{s,t,\varepsilon_2} |U_2| h^{\frac{s+t-3}{2}}.
\]
\end{lem}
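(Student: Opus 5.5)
The plan is to build copies of $K_{s,t}^{+}$ in which $u_1u_2$ is the added edge and the remaining vertices are split across the near-complete bipartite structure $K_{U_1,U_2}$. Let $W:=N_H(u_1)\cap N_H(u_2)\cap U_2^{(j)}$, so that $|W|\ge \varepsilon_2|U_2|$ by hypothesis. Each copy will consist of
\begin{itemize}
\item an $(s-2)$-set $A\subseteq U_1\setminus\{u_1,u_2\}$;
\item a $t$-set $B\subseteq W$ such that $B$ is complete to $A$.
\end{itemize}
Then $A\cup\{u_1,u_2\}$ is the part of size $s$ and $B$ is the part of size $t$. Since $A\subseteq U_1$ and $B\subseteq U_2$ are disjoint, and $u_1,u_2\notin W$ because $W$ lies in their common neighbourhood, every such pair $(A,B)$ gives a copy $F\in\mathcal F(u_1u_2)$. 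Different pairs $(A,B)$ give different copies, because the edge $u_1u_2$ is fixed and $F$ determines its two parts. So it suffices to count the complete bipartite pairs $(A,B)$.

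\textbf{Step 1: the bipartite graph between $W$ and $U_1$ is dense.} Every $w\in W\subseteq U_2^{(j)}$ satisfies $d_{U_1}(w)>(1-j\varepsilon_2)|U_1|\ge(1-10\varepsilon_2)|U_1|$. Deleting $u_1,u_2$ from $U_1$ changes this by at most $2$. Recall $|U_1|\ge \sqrt{(1-\varepsilon)h}/\sqrt{1+\varepsilon}\to\infty$ by \eqref{align-003R} and $|U_1|\le|U_2|$. Hence every $w\in W$ has at least $\frac12|U_1|$ neighbours in $U_1':=U_1\setminus\{u_1,u_2\}$.

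\textbf{Step 2: Kővári--Sós--Turán-type double counting.}
\begin{itemize}
\item For an $(s-2)$-set $A\subseteq U_1'$, let $d(A)$ be the number of its common neighbours in $W$. Then $\sum_A d(A)=\sum_{w\in W}\binom{d_{U_1'}(w)}{s-2}\ge |W|\binom{|U_1|/2}{s-2}$.
\item The average of $d(A)$ over the $\binom{|U_1'|}{s-2}$ sets $A$ is therefore at least $c_s|W|$.
\item By convexity of $x\mapsto\binom{x}{t}$ (Jensen), the number of pairs $(A,B)$ is at least $\binom{|U_1'|}{s-2}\binom{c_s|W|}{t}\ge c'\,|U_1|^{s-2}(\varepsilon_2|U_2|)^{t}$.
\end{itemize}
Here $c'$ depends only on $s$ and $t$, and the inequality uses $|W|\ge \varepsilon_2|U_2|\to\infty$.

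\textbf{Step 3: converting to powers of $h$.} Write $|U_1|^{s-2}|U_2|^{t}=|U_2|\cdot|U_1|^{s-2}|U_2|^{t-1}$. Using $|U_2|\ge(1-\varepsilon)h/|U_1|$, we get
\[
|U_1|^{s-2}|U_2|^{t-1}\ge (1-\varepsilon)^{t-1}|U_1|^{s-t-1}h^{t-1}.
\]
The hypothesis $t+1\ge s$ makes the exponent $s-t-1$ nonpositive. Combined with $|U_1|\le\sqrt{|U_1||U_2|}\le\sqrt{(1+\varepsilon)h}$, this gives
\[
|U_1|^{s-t-1}\ge \bigl((1+\varepsilon)h\bigr)^{(s-t-1)/2}.
\]
Hence $|\mathcal F(u_1u_2)|\ge c_{s,t,\varepsilon_2}|U_2|h^{(s+t-3)/2}$, with a constant of order $\varepsilon_2^{t}$ times a quantity depending on $s,t$.

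\textbf{Main obstacle.} The delicate step is Step 2. A naive attempt would choose $A$ freely in $U_1^{(j)}$ and intersect neighbourhoods with $W$, but each vertex of $U_1^{(j)}$ may miss up to $10\varepsilon_2|U_2|$ vertices of $U_2$, which is comparable to $|W|$. So the common neighbourhood of $A$ inside $W$ could in principle be empty. The fix is to exploit the degree condition on the $W$ side, since $W\subseteq U_2^{(j)}$, and to average via Jensen rather than bound each $A$ individually. The remaining bookkeeping is routine: removing $u_1,u_2$, and taking $m$ large so that binomials behave like powers.
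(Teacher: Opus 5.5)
There is a genuine gap. In Step 1 you assert $|U_1|\ge\sqrt{(1-\varepsilon)h}/\sqrt{1+\varepsilon}\to\infty$, but this does not follow. From \eqref{align-003R} and $|U_1|\le|U_2|$ you only get $|U_1|\le\sqrt{(1+\varepsilon)h}$ and $|U_2|\ge\sqrt{(1-\varepsilon)h}$. Nothing prevents $|U_1|$ from being bounded while $|U_2|$ is of order $h$. This regime is not hypothetical: Lemma~\ref{LEM4.8B} invokes this lemma exactly under the assumption $|U_1|<30s^2$.

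When $|U_1|$ is bounded, two of your estimates can fail: $d_{U_1'}(w)\ge\frac12|U_1|$, and $\binom{|U_1|/2}{s-2}\ge c|U_1|^{s-2}$ (the binomial may even be $0$). Taking $m$ large does not help on the $U_1$ side. More seriously, no combinatorial patch suffices. If $|U_1|\le s-1$ and $u_1,u_2\in U_1$, then $U_1\setminus\{u_1,u_2\}$ has fewer than $s-2$ vertices, so your family of pairs $(A,B)$ is empty. The hypotheses you use (the degree condition on $W$ and \eqref{align-003R}) do not rule this out.

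The missing ingredient is spectral: you must first show $|U_1|\ge s$, as the paper does. If $a:=|U_1|\le s-1$, apply Lemma~\ref{lem2.7G}(ii) with $U=U_1$, $V=U_2$. It gives $\rho\le g_a(h)\le g_{s-1}(h)$, since $x\mapsto g_x(h)$ is increasing, and this contradicts Lemma~\ref{Lem4.2}. With $|U_1|\ge s$ in hand, split by size.
\begin{itemize}
\item If $|U_1|<1/(10\varepsilon_2)$, then $j\varepsilon_2|U_1|<1$, which forces every $w\in W$ to be adjacent to all of $U_1$. So each $(s-2)$-subset of $U_1\setminus\{u_1,u_2\}$ (there is at least one) pairs with every $t$-subset of $W$. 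This gives at least $\binom{|W|}{t}\ge c\,|U_1|^{s-2}|U_2|^t$ copies, because $|U_1|$ is bounded in terms of $\varepsilon_2$.
\item If $|U_1|\ge 1/(10\varepsilon_2)$, then $|U_1|$ is a large constant relative to $s,t$. Your Steps 1--2 then go through with constants depending only on $s,t$.
\end{itemize}
Your Step 3 and the injectivity of $(A,B)\mapsto F$ are correct.

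Incidentally, the Jensen step is unnecessary. The paper picks the $t$-set $B\subseteq W$ first. Since $t$ is fixed, a union bound gives $|\bigcap_{y\in B}N_{U_1}(y)|\ge(1-jt\varepsilon_2)|U_1|$, so every $B$ directly has many admissible choices of $A$.
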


\begin{proof}
Let $L:=N_H(u_1)\cap N_H(u_2)\cap U_2^{(j)}.$
By assumption, $|L|\geq \varepsilon_2 |U_2|$.
Since $|U_2|\geq \sqrt{(1-\varepsilon)h}$, we have
$|U_2|\to\infty$ as $h\to\infty$. Thus, since $t$ is fixed and
$|L|\geq\varepsilon_2|U_2|$, there exists a constant
$\alpha_{t,\varepsilon_2}>0$ such that
$\binom{|L|}{t}\geq \alpha_{t,\varepsilon_2}|U_2|^t$.
We first choose a $t$-set $B\subseteq L$. Then every vertex of $B$ is adjacent to both $u_1$ and $u_2$.

For this fixed $B$, put
$C_B:=\bigcap_{y\in B}N_{U_1}(y).$
Since $B\subseteq U_2^{(j)}$, every $y\in B$ satisfies
$
|N_{U_1}(y)|\geq (1-j\varepsilon_2)|U_1|.
$
Hence, by the union bound,
$
|C_B|\geq (1-jt\varepsilon_2)|U_1|.
$

Now we claim  that $|U_1|\geq s$.
Suppose to the contrary that $|U_1|\leq s-1$, and put
$a:=|U_1|$. By our choice of
$U_1,U_2$, we have
$d(H,K_{U_1,U_2})\leq \varepsilon h.$
Thus, for sufficiently large $h$, Lemma~\ref{lem2.7G} {\rm (ii)} applied
to $H$ with $U=U_1$ and $V=U_2$ gives
$\rho\leq g_a(h).$
For fixed $h$ sufficiently large, the function
$f(x):=\frac{x-1+\sqrt{4h-x^2+1}}{2}$
is increasing on $1\leq x\leq s-1$, since
$f'(x)=\frac12(1-\frac{x}{\sqrt{4h-x^2+1}})>0.$
Hence, as $a\leq s-1$, we get
$\rho\leq g_a(h)\leq g_{s-1}(h),$
contradicting Lemma~\ref{Lem4.2}, which states that
$\rho>g_{s-1}(h)$. Therefore, $|U_1|\geq s$.

We now estimate the number of ways to choose the remaining $s-2$ vertices in the part of size $s$. Since $\varepsilon_2$ is sufficiently small, the above lower bound on $|C_B|$ implies the following. If $|U_1|\geq 100(s+t)$, then
$
|C_B|-2\geq \frac{1}{3}|U_1|.
$
If $s\leq |U_1|<100(s+t)$,
then $jt\varepsilon_2 |U_1|\leq jt\varepsilon_2 \cdot 100(s+t)<1$, and hence $|C_B|>|U_1|-1.$
Since $C_B\subseteq U_1$ and $|C_B|$ is an integer,
it follows that $|C_B|=|U_1|$.
Hence, the number of choices for
$A'\subseteq C_B\setminus\{u_1,u_2\}$ of size $s-2$
is $$\binom{|C_B\setminus \{u_1,u_2\}|}{s-2}\geq \binom{|C_B|-2}{s-2}.$$

We claim that there exists a constant $\beta_{s,t}>0$, depending only
on $s$ and $t$, such that
\begin{align}\label{alig-002}
\binom{|C_B|-2}{s-2}
\geq \beta_{s,t}|U_1|^{s-2}.
\end{align}
Indeed, suppose first that $|U_1|\geq 100(s+t)$. Since
$|C_B|-2\geq \frac{1}{3}|U_1|\geq 2(s-2)$
and $s$ is fixed, it follows that
\begin{align*}
\binom{|C_B|-2}{s-2}
\geq
\frac{\bigl((|C_B|-2)/2\bigr)^{s-2}}{(s-2)!}
\geq
\frac{1}{6^{s-2}(s-2)!}|U_1|^{s-2}.
\end{align*}
Here we have used the fact that $|C_B|-2$ is sufficiently large
compared with $s-2$.

Now suppose that $s\leq |U_1|<100(s+t)$.
In this case, $|C_B|=|U_1|$, and hence
$\binom{|C_B|-2}{s-2}
=\binom{|U_1|-2}{s-2}$.
Since $|U_1|$ ranges over only finitely many integers, the constant
\[
\gamma_{s,t}
:=
\min_{\substack{k\in\mathbb{Z}\\
s\leq k<100(s+t)}}
\frac{\binom{k-2}{s-2}}{k^{s-2}}
\]
is positive. Therefore,
$\binom{|C_B|-2}{s-2}\geq\gamma_{s,t}|U_1|^{s-2}$.

Thus, by taking
\[
\beta_{s,t}
:=
\min\left\{
\frac{1}{6^{s-2}(s-2)!},
\gamma_{s,t}
\right\},
\]
we obtain \eqref{alig-002} in both cases.
Consequently, after fixing $B$, there are at least
$\beta_{s,t}|U_1|^{s-2}$ choices for $A'$.

For each pair $(A',B)$, let
$A:=A'\cup\{u_1,u_2\}$.
Then the edges between $A$ and $B$, together with the edge
$u_1u_2$, form a copy of $K_{s,t}^{+}$ in $H$.
Note that the above construction may count the same copy of
$K_{s,t}^{+}$ several times. However, the multiplicity is bounded by
a constant depending only on $s$ and $t$. Indeed, for a fixed copy
$F\in \mathcal{F}(u_1u_2)$, the possible choices of the set $B$ are
subsets of $V(F)\setminus\{u_1,u_2\}$ of size $t$. Hence the number of
possible choices of $B$ is at most
$\binom{s+t}{t}$.
Once $B$ is fixed, the set $A'$ is uniquely determined as
$A'=V(F)\setminus(B\cup\{u_1,u_2\})$.
Therefore, each copy in $\mathcal{F}(u_1u_2)$ is counted at most
$\binom{s+t}{t}$ times in our construction.
Consequently,
\[
|\mathcal{F}(u_1u_2)|
\ge
\frac{\alpha_{t,\varepsilon_2}\beta_{s,t}}
{\binom{s+t}{t}}
|U_2|^t|U_1|^{s-2}.
\]
Since $|U_1|\leq |U_2|$ and
$|U_1||U_2|\geq (1-\varepsilon)h$,
we have
$|U_2|\geq \sqrt{|U_1||U_2|}\geq \sqrt{(1-\varepsilon)h}$.
Combining these with $t+1\geq s\geq 3$, we obtain
\begin{align*}
|U_2|^t|U_1|^{s-2}
&=|U_2|^{t-s+2}(|U_1||U_2|)^{s-2}
=|U_2|\cdot |U_2|^{t-s+1}(|U_1||U_2|)^{s-2}\\
&\geq
|U_2|\cdot \bigl((1-\varepsilon)h\bigr)^{\frac{t-s+1}{2}}
\cdot \bigl((1-\varepsilon)h\bigr)^{s-2}
\geq |U_2|\big(\frac{h}{2}\big)^{\frac{s+t-3}{2}}.
\end{align*}
Hence
$
|\mathcal{F}(u_1u_2)|
\geq
c_{s,t,\varepsilon_2}|U_2|h^{\frac{s+t-3}{2}}
$
for some positive constant $c_{s,t,\varepsilon_2}$ depending only on $s$, $t$  and $\varepsilon_2$.
 This completes the proof.
\end{proof}

\begin{lem}\label{LEM4.8B}
We have $|U_1|\geq 30s^2.$
\end{lem}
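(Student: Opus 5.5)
We argue by contradiction: suppose $a:=|U_1|<30s^2$, so $|U_1|$ is bounded by a constant depending only on $s$. The plan has three steps. First show that $H[U_1]$ contains an edge. Then show that almost every vertex of $U_2$ is joined to all of $U_1$. Finally, apply Lemma~\ref{Lem4.7B} to an edge inside $U_1$; because $|U_2|$ is linear in $h$ when $|U_1|$ is bounded, this produces far more copies of $K_{s,t}^+$ than \eqref{align-002T} permits.

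\emph{Step 1: $H[U_1]$ has an edge.} Since $a<30s^2$ and $\varepsilon\ll\varepsilon_0$, we may assume $\varepsilon\le \frac{1}{100a}$ for every such $a$. Moreover $d(H,K_{U_1,U_2})\le\varepsilon h$ and $H$ has no isolated vertices, so Lemma~\ref{lem2.7G}(i) applies with $U=U_1$ and $V=U_2$. It gives
\[
\rho\le \sqrt h+\frac{e(U_1)}{a}+O_a\Big(\frac1{\sqrt h}\Big).
\]
By Lemma~\ref{Lem4.2} and \eqref{equ-001},
\[
\rho>g_{s-1}(h)=\sqrt h+\frac{s-2}{2}+O\Big(\frac1{\sqrt h}\Big).
\]
Since $s\ge 3$, we have $\frac{s-2}{2}\ge\frac12$, so for large $h$ these two bounds force $e(U_1)>0$. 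In fact they give $e(U_1)\ge a(s-2)/2$, though only one edge is needed. Fix an edge $u_1u_2\in E(H[U_1])$.

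\emph{Step 2: the common neighbourhood of $u_1,u_2$ in $U_2$ is large.} Consider $U_2^{(1)}=U_2\setminus S_2^{(1)}$. Each $y\in U_2^{(1)}$ satisfies $|N_{U_1}(y)|>(1-\varepsilon_2)a$. As $a<30s^2$ and $\varepsilon_2\ll1$, we have $\varepsilon_2 a<1$, so integrality gives $N_{U_1}(y)=U_1$. In particular $y$ is adjacent to both $u_1$ and $u_2$. By Lemma~\ref{Lem4.6},
\[
|N_H(u_1)\cap N_H(u_2)\cap U_2^{(1)}|\ge |U_2^{(1)}|\ge(1-\varepsilon_2)|U_2|\ge\varepsilon_2|U_2|,
\]
which is exactly the hypothesis of Lemma~\ref{Lem4.7B} with $j=1$.

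\emph{Step 3: counting.} Lemma~\ref{Lem4.7B} gives
\[
N(K_{s,t}^+,H)\ge|\mathcal F(u_1u_2)|\ge c_{s,t,\varepsilon_2}|U_2|h^{\frac{s+t-3}{2}}.
\]
By \eqref{align-003R}, $|U_2|\ge (1-\varepsilon)h/a\ge h/(60s^2)$, so
\[
N(K_{s,t}^+,H)=\Omega_{s,t,\varepsilon_2}\bigl(h^{\frac{s+t-1}{2}}\bigr),
\]
with an implied constant independent of $\varepsilon_1$. Since $\varepsilon_1\ll$ this constant (the hierarchy allows this, as $\varepsilon_1\ll\varepsilon_2$ in \eqref{align-7G}), this contradicts \eqref{align-002T}. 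Hence $|U_1|\ge 30s^2$.

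The main point needing care is the interplay of constants. The bound $\varepsilon\le\frac1{100a}$ in Step 1 and the bound $\varepsilon_2 a<1$ in Step 2 must hold uniformly for all $a<30s^2$; this is guaranteed by the hierarchy \eqref{align-7G}, since $30s^2$ depends only on $s$. The $O_a(1/\sqrt h)$ error in Step 1 is harmless because $a$ ranges over finitely many values.
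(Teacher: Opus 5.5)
Your proof is correct and uses the same two ingredients as the paper. Lemma~\ref{lem2.7G}(i) forces an edge inside $U_1$; the paper phrases this as a contradiction with $e(U_1)=0$ after first ruling out such edges by counting. Then Lemma~\ref{Lem4.7B}, together with $|U_2|\geq h/(60s^2)$, contradicts \eqref{align-002T}. The only cosmetic difference is how you verify the hypothesis of Lemma~\ref{Lem4.7B}: you use integrality on the $U_2$ side ($N_{U_1}(y)=U_1$ for $y\in U_2^{(1)}$), whereas the paper uses integrality to get $S_1^{(8)}=\varnothing$ and then intersects the $U_2$-neighbourhoods of the two endpoints.
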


\begin{proof}
Suppose to the contrary that $|U_1|<30s^2$.
It follows that $|U_2|\geq \frac{(1-\varepsilon)h}{|U_1|}\geq \frac{h}{60s^2}$.
By Lemma \ref{Lem4.6}, we have
\[
|S_1^{(8)}|
\leq \varepsilon_2 |U_1|
< 30s^2\varepsilon_2
< 1.
\]
Since $|S_1^{(8)}|$ is an integer, we have $S_1^{(8)}=\varnothing$, and hence
$U_1^{(8)}=U_1$.
If $e(U_1^{(8)})>0$, we may choose an edge $u_1u_2\in E(U_1^{(8)})$.
Then $|N_{U_2}(u_i)|\geq (1-8\varepsilon_2)|U_2|$ for each $i\in \{1,2\}$.
Consequently, this edge $u_1u_2$ satisfy
$$|N_H(u_1)\cap N_H(u_2)\cap U_2^{(8)}|
\geq |N_{U_2}(u_1)|+|N_{U_2}(u_2)|-|U_2|-|S_2^{(8)}|
 \geq (1-17\varepsilon_2)|U_2|>\varepsilon_2 |U_2|.$$
Then by Lemma \ref{Lem4.7B} and $|U_2|\geq \frac{h}{60s^2}$, we have
$$|\mathcal{F}(u_1u_2)|
\geq c_{s,t,\varepsilon_2} |U_2| h^{\frac{s+t-3}{2}}
\geq \frac{1}{60s^2}c_{s,t,\varepsilon_2}h^{\frac{s+t-1}{2}},$$
which contradicts \eqref{align-002T} as $\varepsilon_1\ll \varepsilon_2$.
Consequently, we must have $e(U_1^{(8)})=0$.
Since $U_1^{(8)}=U_1$, this immediately implies $e(U_1)=0$.

Since $|U_1|<30s^2$ and $s$ is fixed, there are only finitely
many possible values of $|U_1|$. Hence Lemma~\ref{lem2.7G}(i)
applies uniformly for all sufficiently large $h$. Since
$e(U_1)=0$, we obtain
$\rho\leq\sqrt h+O_s(\frac1{\sqrt h}).$
On the other hand,
Lemma~\ref{Lem4.2} yields
$$ \rho > g_{s-1}(h) = \sqrt h+\frac{s-2}{2} + O_s\left(\frac1{\sqrt h}\right),$$
which leads to a contradiction.
We therefore conclude that
$|U_1|\geq 30s^2.$
\end{proof}

\begin{lem}\label{Lem4.9C}
For every $w\in V(H)$, there exists some $i\in\{1,2\}$ such that
$|N_{U_i}(w)|\le 6\varepsilon_2 |U_i|.$
Furthermore,
for each $i\in \{1,2\}$ and every $u\in U_i$, we have $|N_{U_i}(u)|\le 6\varepsilon_2 |U_i|$.

\end{lem}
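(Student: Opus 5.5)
The plan is to argue by contradiction. If the statement failed, some vertex $w$ would have many neighbours on both sides, and we would use this to build $\Omega(h^{(s+t-1)/2})$ copies of $K_{s,t}^+$ in $H$. That contradicts \eqref{align-002T}, since $\varepsilon_1\ll\varepsilon_2$. The tool throughout is Lemma~\ref{Lem4.7B}: an edge $u_1u_2$ whose endpoints have at least $\varepsilon_2|U_2|$ common neighbours in $U_2^{(j)}$ already produces $c_{s,t,\varepsilon_2}|U_2|h^{(s+t-3)/2}$ copies. So it suffices to find $\Omega(\sqrt h)$ such edges, or, when $U_2$ is large, fewer edges but with a correspondingly larger factor $|U_2|$.

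\textbf{First assertion.} Suppose $w$ satisfies $|N_{U_1}(w)|>6\varepsilon_2|U_1|$ and $|N_{U_2}(w)|>6\varepsilon_2|U_2|$. By Lemma~\ref{Lem4.6}, at most $\varepsilon_2|U_1|$ vertices of $U_1$ lie outside $U_1^{(1)}$. Hence $w$ has at least $5\varepsilon_2|U_1|$ neighbours $u\in U_1^{(1)}$, and each such $u$ satisfies $|N_{U_2}(u)|\ge(1-\varepsilon_2)|U_2|$. The set $N_{U_2}(w)\cap U_2^{(1)}$ has size at least $5\varepsilon_2|U_2|$. Therefore, for each such $u$, the edge $wu$ has at least $5\varepsilon_2|U_2|-\varepsilon_2|U_2|\ge\varepsilon_2|U_2|$ common neighbours in $U_2^{(1)}$. (The case $w=u$ cannot occur, since $u$ is a neighbour of $w$.)

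Lemma~\ref{Lem4.7B} then applies to every edge $wu$, giving at least $c|U_2|h^{(s+t-3)/2}$ copies of $K_{s,t}^+$ for each. We sum over the $\ge5\varepsilon_2|U_1|$ choices of $u$. The multiplicity is bounded, because each copy of $K_{s,t}^+$ has a unique added edge; in fact the copies for different $u$ are distinct, since the added edge $wu$ differs. This gives at least $5\varepsilon_2 c\,|U_1||U_2|h^{(s+t-3)/2}\ge 4\varepsilon_2c\,h^{(s+t-1)/2}$ copies by \eqref{align-003R}, contradicting \eqref{align-002T}.

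\textbf{Second assertion.} Take $u\in U_i$. By Lemma~\ref{Lem4.5}(i), $|N_{U_{3-i}}(u)|\ge\frac12|U_{3-i}|>6\varepsilon_2|U_{3-i}|$. Applying the first assertion to $w=u$, index $3-i$ is excluded, so it must be index $i$ that satisfies $|N_{U_i}(u)|\le6\varepsilon_2|U_i|$.

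\textbf{Main obstacle.} The difficulty is the asymmetry in Lemma~\ref{Lem4.7B}: the common neighbourhood must lie in $U_2$, while the many edges are indexed by $U_1$. The argument above handles this by choosing edges $wu$ with $u\in U_1$, which works for both sides simultaneously. I will need to check that the resulting copies are counted in $\mathcal F(wu)$ with the added edge equal to $wu$, and that Lemma~\ref{Lem4.7B} needs no hypothesis on where $u_1,u_2$ lie. As stated, it needs none. It does use $|U_1|\ge s$, which holds by Lemma~\ref{LEM4.8B}.
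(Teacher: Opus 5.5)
Your proposal is correct and follows essentially the same argument as the paper. You assume some $w$ has more than $6\varepsilon_2|U_i|$ neighbours on both sides, and apply Lemma~\ref{Lem4.7B} to each of the more than $5\varepsilon_2|U_1|$ edges $wu$ with $u$ a well-connected neighbour in $U_1$. You then sum the resulting disjoint families to contradict \eqref{align-002T}, and deduce the second assertion from Lemma~\ref{Lem4.5}(i) exactly as the paper does.

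The differences are cosmetic. The paper uses the level $U_i^{(4)}$ where you use $U_i^{(1)}$; either choice leaves a common neighbourhood larger than $\varepsilon_2|U_2|$. The paper also explicitly proves that the added edge of a copy of $K_{s,t}^+$ is unique (deleting any cross edge leaves a triangle, since $t\ge 2$), which is what makes the families $\mathcal{F}(wu)$ disjoint; you only assert this.
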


\begin{proof}
Suppose, for contradiction, that there exists a vertex $w\in V(H)$
such that $|N_{U_i}(w)|>6\varepsilon_2|U_i|$
for both $i\in\{1,2\}$.
Since $U_1^{(4)}=U_1\setminus S_1^{(4)}$ and
$|S_1^{(4)}|\leq\varepsilon_2|U_1|$, we have
\[
\begin{aligned}
|N_{U_1^{(4)}}(w)|
\geq |N_{U_1}(w)|-|S_1^{(4)}|
>6\varepsilon_2|U_1|-\varepsilon_2|U_1|
=5\varepsilon_2|U_1|.
\end{aligned}
\]

Fix any vertex $u\in N_{U_1^{(4)}}(w)$.
Let
$L_u:=N_H(w)\cap N_H(u)\cap U_2^{(4)}$.
Since $|U_2\setminus U_2^{(4)}|=|S_2^{(4)}|\leq\varepsilon_2|U_2|$, we have
\[
\begin{aligned}
|N_{U_2^{(4)}}(w)|
\geq |N_{U_2}(w)|-|S_2^{(4)}|
>6\varepsilon_2|U_2|-\varepsilon_2|U_2|
=5\varepsilon_2|U_2|.
\end{aligned}
\]
Moreover, since $u\in U_1^{(4)}$, we have
$|N_{U_2}(u)|>(1-4\varepsilon_2)|U_2|$,
and hence
$|U_2\setminus N_{U_2}(u)|<4\varepsilon_2|U_2|$.
Therefore,
\[
\begin{aligned}
|L_u|
=|N_{U_2^{(4)}}(w)\cap N_{U_2}(u)|
\geq |N_{U_2^{(4)}}(w)|
      -|U_2\setminus N_{U_2}(u)|
>\bigl(5\varepsilon_2-4\varepsilon_2\bigr)|U_2|
=\varepsilon_2|U_2|.
\end{aligned}
\]
Clearly, $N_{U_2^{(4)}}(w)\cap N_{U_2}(u)=N_H(w)\cap N_H(u)\cap U_2^{(4)}$.
Thus, the edge $wu$ satisfies the hypothesis of Lemma~\ref{Lem4.7B}.

Recall that $\mathcal{F}(wu)$ denotes the family of all copies $F$ of
$K_{s,t}^{+}$ in $H$ such that $wu\in E(F)$ and
$F-\{wu\}\cong K_{s,t}$.
We first note that, in any copy of $K_{s,t}^{+}$, the added edge is
the unique edge whose deletion results in a copy of $K_{s,t}$.
Indeed, deleting the added edge clearly gives $K_{s,t}$. On the other
hand, if a cross-edge of the underlying $K_{s,t}$ is deleted, then the
added edge remains in the graph. Since $s\ge3$ and $t\ge2$, the two
endpoints of the added edge still have a common neighbor, and hence the
resulting graph contains a triangle. Therefore, it cannot be
isomorphic to the bipartite graph $K_{s,t}$.

We now show that the families $\mathcal{F}(wu)$, where
$u\in N_{U_1^{(4)}}(w)$, are pairwise disjoint. Suppose, to the
contrary, that there exist two distinct vertices
$u,u'\in N_{U_1^{(4)}}(w)$ and a copy
$F\in\mathcal{F}(wu)\cap\mathcal{F}(wu')$.
Then both $wu$ and $wu'$ are edges of $F$ whose deletion produces a
copy of $K_{s,t}$. By the uniqueness of the added edge proved above,
we must have $wu=wu'$,
which implies $u=u'$, a contradiction. Hence,
$\mathcal{F}(wu)\cap\mathcal{F}(wu')=\varnothing$
for all distinct $u,u'\in N_{U_1^{(4)}}(w)$.

By Lemma~\ref{Lem4.7B}, for each $u \in  N_{U_1^{(4)}}(w)$, we have
$|\mathcal{F}(w u)| \ge c_{s,t,\varepsilon_2} |U_2| h^{\frac{s+t-3}{2}}.$
Summing over all choices of $u \in N_{U_1^{(4)}}(w)$,
 we obtain
\begin{align*}
N(K_{s,t}^+, H) \ge \sum_{u \in N_{U_1^{(4)}}(w)} |\mathcal{F}(w u)|
> \varepsilon_2 |U_1| \cdot c_{s,t,\varepsilon_2} |U_2| h^{\frac{s+t-3}{2}}
= c_{s,t,\varepsilon_2} \varepsilon_2 |U_1||U_2| h^{\frac{s+t-3}{2}}.
\end{align*}
Since $|U_1||U_2| \ge (1-\varepsilon)h$ and $\varepsilon$ is sufficiently small, we get
$N(K_{s,t}^+, H) > c_{s,t,\varepsilon_2} \varepsilon_2 (1-\varepsilon) h^{\frac{s+t-1}{2}}.$
However, by \eqref{align-7G} we have $2^{\frac{s+t-1}{2}}\varepsilon_1 < c_{s,t,\varepsilon_2} \varepsilon_2 (1-\varepsilon)$, and hence
$N(K_{s,t}^+, H) > 2^{\frac{s+t-1}{2}}\varepsilon_1 h^{\frac{s+t-1}{2}},$
which contradicts \eqref{align-002T}.
Therefore, for every $w\in V(H)$, there exists some $i\in\{1,2\}$ such that
$|N_{U_i}(w)|\le 6\varepsilon_2 |U_i|.$

For each $i\in \{1,2\}$ and every vertex $u\in U_i$,
by Lemma~\ref{Lem4.5} (i), we have
$|N_{U_{3-i}}(u)|\geq \frac{1}{2}|U_{3-i}|.$
Since $\varepsilon_2\ll 1$, it follows that
$|N_{U_{3-i}}(u)|\geq \frac{1}{2}|U_{3-i}|>6\varepsilon_2|U_{3-i}|.$
Applying the first part of the Lemma to the vertex $u$, there exists some
$j\in\{1,2\}$ such that
$|N_{U_j}(u)|\leq 6\varepsilon_2|U_j|.$
The above inequality for $N_{U_{3-i}}(u)$ shows that $j\neq 3-i$, and hence we
must have $j=i$. Therefore,
$|N_{U_i}(u)|\leq 6\varepsilon_2|U_i|.$
\end{proof}

\begin{lem}\label{Lem4.10C}
 The following assertions hold:

{\rm (i)} Let $i\in \{1,2\}$.
For any two vertices $u_1\in U_i$ and $u_2\in U_i^{(8)}$,
     we have $x_{u_1}-x_{u_2}\leq 16\varepsilon_2 x_{u^*}$.
Furthermore, for any two vertices $u_1,u_2\in U_i^{(8)}$,
     we have $|x_{u_1}-x_{u_2}|\leq 16\varepsilon_2 x_{u^*}$.

{\rm (ii)} For every $u\in U_i^{(8)}$, we have
   $x_u^2\geq \frac{(1-30\varepsilon_2)|U_{3-i}|}{2h}.$

{\rm (iii)} For every $i\in \{1,2\}$, we have
 $\frac12-20\varepsilon_2\leq \sum_{u\in U_i^{(8)}}x_u^2\leq \sum_{u\in U_i}x_u^2 \leq \frac12+20\varepsilon_2$.
\end{lem}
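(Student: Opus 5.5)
The plan is to prove (i) by a two-step walk argument with the quadratic eigen-equation, then (iii) by a mass-balance argument, and to deduce (ii) from (i) and (iii).

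\textbf{Part (i).} Fix $i$, $u_1\in U_i$ and $u_2\in U_i^{(8)}$, and put $T:=N_H(u_1)\setminus N_H(u_2)$. Using $\rho^2\mathbf{x}=A^2(H)\mathbf{x}$ and discarding the nonnegative contribution of $N_H(u_2)\setminus N_H(u_1)$, I obtain
\[
\rho^2(x_{u_1}-x_{u_2})\le \sum_{v\in T}\sum_{w\in N_H(v)}x_w\le x_{u^*}\sum_{v\in T}d_H(v).
\]
It then suffices to show $\sum_{v\in T}d_H(v)\le 16\varepsilon_2 h$, because Lemma~\ref{Lem4.2} gives $\rho^2\ge h$.

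To bound this sum, split $T$ into $T\cap U_{3-i}$, $T\cap U_i$ and $T\cap R$, and likewise split each degree into its parts into $U_1$, $U_2$ and $R$. The $U_1$--$U_2$ edges are the main terms.
\begin{itemize}
\item $u_2\in U_i^{(8)}$ gives $|T\cap U_{3-i}|\le 8\varepsilon_2|U_{3-i}|$, and each such $v$ has at most $|U_i|$ neighbours in $U_i$.
\item Lemma~\ref{Lem4.9C} gives $|T\cap U_i|\le 6\varepsilon_2|U_i|$, and each such $v$ has at most $|U_{3-i}|$ neighbours in $U_{3-i}$.
\end{itemize}
These contribute at most $14\varepsilon_2|U_1||U_2|\le 14\varepsilon_2(1+\varepsilon)h$ by \eqref{align-003R}.

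Every remaining edge counted is either inside some $U_j$ or incident to $R$. All such edges lie in the symmetric difference with $K_{U_1,U_2}$, so they number at most $\varepsilon h$ and contribute at most $2\varepsilon h$ in total. Since $\varepsilon\ll\varepsilon_2$, this gives the bound $16\varepsilon_2 h$. The second assertion of (i) follows by exchanging the roles of $u_1$ and $u_2$, both now lying in $U_i^{(8)}$.

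\textbf{Part (iii).} Write $X_j:=\sum_{u\in U_j}x_u^2$. Summing the eigen-equation $\rho x_u=\sum_{v\sim u}x_v$ against $x_u$ over $u\in U_j$ gives
\[
\rho X_j=\sum_{uv\in E(U_1,U_2)}x_ux_v+2\sum_{uv\in E(U_j)}x_ux_v+\sum_{u\in U_j,\,w\in R,\,uw\in E(H)}x_ux_w .
\]
The first term is common to $j=1,2$, so I need the last two terms to be $O(\varepsilon_2\rho)$.

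For the edges inside $U_j$ and the edges to $R$, I would combine three ingredients: these "bad" edges number at most $\varepsilon h$; Lemma~\ref{Lem4.9C} bounds degrees inside $U_j$; and Lemma~\ref{Lem4.5}(ii) bounds the $U$-degrees of $R$-vertices. A Cauchy--Schwarz estimate of the shape $\sum_{\text{bad}}x_ux_v\le\bigl(\sum_{\text{bad}}x_u^2\bigr)^{1/2}\bigl(\sum_{\text{bad}}x_v^2\bigr)^{1/2}$ should then work. Here each $x_v^2$ is controlled through $\rho x_v\le\sum_{w\sim v}x_w$, in the same spirit as the proof of Lemma~\ref{lem2.7G}.

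This yields $|X_1-X_2|\le 20\varepsilon_2$. It remains to bound $\sum_{w\in R}x_w^2$ by $O(\varepsilon_2)$; I would use the same $R$-edge count, since vertices of $R$ have small $U$-degree. Together with $\|\mathbf{x}\|_2=1$ this gives $X_j\in[\tfrac12-20\varepsilon_2,\tfrac12+20\varepsilon_2]$.

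To pass from $X_j$ to the sum over $U_j^{(8)}$, note that by Lemma~\ref{Lem4.6} the set $S_j^{(8)}$ has at most $\varepsilon_2|U_j|$ vertices, and by (i) each of its entries exceeds the entries on $U_j^{(8)}$ by at most $16\varepsilon_2x_{u^*}$. Hence it carries only $O(\varepsilon_2)$ of the mass, provided $x_{u^*}^2=O(1/|U_j|)$, which I address below.

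\textbf{Part (ii).} By (i), all entries on $U_i^{(8)}$ lie within $16\varepsilon_2x_{u^*}$ of a common value $c_i$. Part (iii) then forces $c_i^2\approx\frac{1}{2|U_i^{(8)}|}\ge\frac{1}{2|U_i|}$, and $|U_{3-i}|/h\le\frac{1+\varepsilon}{|U_i|}$ by \eqref{align-003R}. Combining these gives $x_u^2\ge(1-30\varepsilon_2)|U_{3-i}|/(2h)$ for every $u\in U_i^{(8)}$.

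\textbf{Main obstacle.} The delicate point is controlling $x_{u^*}$ relative to $c_i$ when $|U_1|$ and $|U_2|$ are very unbalanced, since then $x_{u^*}$ might be of order $|U_1|^{-1/2}\gg|U_2|^{-1/2}$. I would first try to show that $u^*$ may be taken in $U_1^{(8)}$, or at least that $x_{u^*}\le(1+O(\varepsilon_2))c_1$. For this I would apply the bound $\rho^2x_{u^*}\le x_{u^*}\sum_{v\in N(u^*)}\ldots$ with the degree estimates above, using Lemma~\ref{Lem4.4} for low-degree vertices. For $i=2$ the error $16\varepsilon_2x_{u^*}$ is then not small compared with $c_2$, so I would re-run the walk bound of (i) with the finer weight $\max_{U_1}x$ in place of $x_{u^*}$ on the $U_1$-side terms.
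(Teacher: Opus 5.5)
\textbf{Verdict: genuine gap in part (ii).} Your part (i) coincides with the paper's argument. Your mass-balance identity for $X_j=\sum_{u\in U_j}x_u^2$ can also be made rigorous with operator norms:
\begin{itemize}
\item Lemma~\ref{lem2.8} gives $\mathbf{x}_{U_j}^{\top}A(H[U_j])\mathbf{x}_{U_j}\le\sqrt{2\varepsilon h}\,X_j$;
\item the $U_j$--$R$ term is at most $\sqrt{\varepsilon h}\,\|\mathbf{x}_R\|_2$;
\item $\|\mathbf{x}_R\|_2\le 2\sqrt{\varepsilon}$, as in the proof of Lemma~\ref{lem2.7G}.
\end{itemize}
The gap is (ii), the pointwise lower bound on which the lemma hinges. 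You reduce it to controlling $x_{u^*}$ and flag this as the main obstacle, but the remedy you sketch fails.

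Rerun the walk bound for $u_1\in U_2$, $u_2\in U_2^{(8)}$. The pairs $(v,z)$ with $v\in N_{U_2}(u_1)$ and $z\in N_{U_1}(v)$ can only be bounded by $6\varepsilon_2|U_1||U_2|\max_{U_1}x$. These $z$ genuinely lie in $U_1$, so no finer weight is available. After dividing by $\rho^2\ge h$ this is about $6\varepsilon_2/\sqrt{2|U_1|}$, whereas the target is $\sqrt{|U_1|/(2h)}\approx 1/\sqrt{2|U_2|}$. Once $|U_2|/|U_1|\gg\varepsilon_2^{-2}$ the error swamps the target. That regime is not excluded at this stage: only $|U_1|\ge 30s^2$ is known, and Case~1 of Section~\ref{sec5A} relies on this very lemma. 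Bounding these pairs instead by $\rho\sum_{v\in N_{U_2}(u_1)}x_v$ is no better, since then $6\varepsilon_2|U_2|\gg\rho$.

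The constants also fail. Even when $u^*\in U_i^{(8)}$, deducing (ii) from (i) and (iii) gives at best $x_u^2\ge(1-16\varepsilon_2)^2X_i/|U_i|$. That is a loss of about $32\varepsilon_2$, already beyond the $30\varepsilon_2$ in the statement.

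\textbf{The missing idea.} It is the core property, Lemma~\ref{Lem4.4}, applied pointwise on the larger side; it needs no information about $x_{u^*}$.
\begin{itemize}
\item For $u\in U_2^{(8)}$ you have $d_H(u)\le|U_1|+\varepsilon h<\varepsilon_0 h$, since $|U_1|\le\sqrt{(1+\varepsilon)h}$. Hence directly $2hx_u^2\ge(1-2\varepsilon_0)d_H(u)\ge(1-2\varepsilon_0)(1-8\varepsilon_2)|U_1|$.
\item For $u\in U_1^{(8)}$, whose degree may be too large for Lemma~\ref{Lem4.4}, the paper uses $\rho x_u\ge\sum_{v\in N_{U_2^{(8)}}(u)}x_v$ with $|N_{U_2^{(8)}}(u)|\ge(1-9\varepsilon_2)|U_2|$. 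It combines this with the upper bound $\rho^2\le(1+2\varepsilon)h$ of Lemma~\ref{Lem4.2}, which your plan never uses, to get $x_u^2\ge(1-30\varepsilon_2)|U_2|/(2h)$.
\item Then (iii) is just (ii) summed over $U_i^{(8)}$, using $|U_i^{(8)}|\ge(1-\varepsilon_2)|U_i|$ and \eqref{align-003R}. The upper bound comes from $\|\mathbf{x}\|_2=1$.
\end{itemize}
So the logical order is (ii)$\Rightarrow$(iii), the reverse of yours. The location and size of $x_{u^*}$ are only pinned down afterwards, in Lemma~\ref{Lem4.12C}.

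To avoid Lemma~\ref{Lem4.4} you would have to do three things. First prove $u^*\in U_1^{(8)}\cup U_2^{(8)}$ by double-walk counting; for $u^*\in R$ use Lemma~\ref{Lem4.5}(ii) together with Lemma~\ref{Lem4.9C}. Then apply (i) only on the side containing $u^*$. Finally transfer the bound to the other side through the eigen-equation. That works, but only with constants near $50\varepsilon_2$ rather than the stated ones.
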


\begin{proof}
(i)
Let $i\in \{1,2\}$ and let $u_1\in U_i$, $u_2\in U_i^{(8)}$.
If $x_{u_1}< x_{u_2}$, then there is nothing to do.
It remains to consider that $x_{u_1}\ge x_{u_2}$.
Applying the eigenvalue equation twice yields
\[
\rho^2(x_{u_1}-x_{u_2}) = \sum_{v\in N_H(u_1)}\sum_{z\in N_H(v)}x_z - \sum_{v\in N_H(u_2)}\sum_{z\in N_H(v)}x_z.
\]
Observe that the terms for $v \in N_H(u_1) \cap N_H(u_2)$ cancel out.
By the non-negativity of the eigenvector $x$,
discarding the negative terms in the difference yields
\[
\rho^2(x_{u_1}-x_{u_2}) \le \sum_{v \in N_H(u_1) \setminus N_H(u_2)} \sum_{z \in N_H(v)} x_z.
\]
Let $j=3-i$.
We estimate the double sum
$\sum_{v \in N_H(u_1) \setminus N_H(u_2)} \sum_{z \in N_H(v)} x_z$
by considering the following cases:

\begin{itemize}
    \item Suppose that $v \in N_{U_j}(u_1)$ and $z \in N_{U_i}(v)$.
Since $|N_{U_j}(u_2)|\ge (1-8\varepsilon_2)|U_j|$,
we have $|U_j\setminus N_{U_j}(u_2)|\le 8\varepsilon_2|U_j|$.
This yields that
\[
\sum_{v\in N_{U_j}(u_1)\setminus N_{U_j}(u_2)} \sum_{z\in N_{U_i}(v)}x_z
\le |U_j\setminus N_{U_j}(u_2)| \cdot |U_i|x_{u^*}
\le 8\varepsilon_2|U_1||U_2|x_{u^*}.
\]

\item
Suppose that $v \in N_{U_i}(u_1)$ and $z \in N_{U_j}(v)$.
Since $u_1\in U_i$, we get
$|N_{U_j}(u_1)|\geq \frac12|U_j|$.
By Lemma~\ref{Lem4.9C}, there exists $k\in\{1,2\}$ such that
$|N_{U_k}(u_1)|\le6\varepsilon_2|U_k|$.
Since $6\varepsilon_2<\frac12$, the index $k$ cannot be $j$.
Hence $k=i$, and therefore
$|N_{U_i}(u_1)|\le6\varepsilon_2|U_i|$.
Consequently,
\[
\sum_{v\in N_{U_i}(u_1)\setminus N_{U_i}(u_2)} \sum_{z\in N_{U_j}(v)}x_z
\le |N_{U_i}(u_1)| \cdot |U_j|x_{u^*}
\le 6\varepsilon_2|U_1||U_2|x_{u^*}.
\]

\item
It remains to consider the contribution from edges outside $K_{U_1,U_2}$. Since each such edge belongs to the symmetric difference $E(H)\triangle E(K_{U_1,U_2})$, we obtain
\[
\sum_{v\in N_{H}(u_1)\setminus N_{H}(u_2)} \sum_{\substack{z\in N_H(v)\\ vz\notin E(K_{U_1,U_2})}} x_z
\leq \sum_{v\in V(H)} \sum_{\substack{z\in N_H(v)\\ vz\notin E(K_{U_1,U_2})}} x_z
\le 2d(H,K_{U_1,U_2})x_{u^*},
\]
where the factor of $2$ accounts for the two possible orientations of each edge.
\end{itemize}

Summing these three parts yields:
\[
\rho^2(x_{u_1}-x_{u_2}) \le 14\varepsilon_2|U_1||U_2|x_{u^*} + 2d(H,K_{U_1,U_2})x_{u^*}.
\]
Using the bounds $\rho^2\ge h$, $|U_1||U_2|\le (1+\varepsilon)h$, and $d(H,K_{U_1,U_2})\le \varepsilon h$, we have
\[
h(x_{u_1}-x_{u_2}) \le \rho^2(x_{u_1}-x_{u_2}) < \big(14\varepsilon_2(1+\varepsilon) + 2\varepsilon\big)h x_{u^*} < 16\varepsilon_2 h x_{u^*},
\]
where the last inequality holds since $\varepsilon \ll \varepsilon_2$.
Thus, $x_{u_1}-x_{u_2}\leq 16\varepsilon_2 x_{u^*}$.

Finally, let $u_1,u_2\in U_i^{(8)}$.
Since $U_i^{(8)}\subseteq U_i$, applying the first assertion
to the ordered pairs $(u_1,u_2)$ and $(u_2,u_1)$, respectively, gives
\[
x_{u_1}-x_{u_2}\leq 16\varepsilon_2 x_{u^*}
\quad\text{and}\quad
x_{u_2}-x_{u_1}\leq 16\varepsilon_2 x_{u^*}.
\]
Therefore,
$|x_{u_1}-x_{u_2}|\leq 16\varepsilon_2 x_{u^*}$,
which proves the second assertion.

\medskip

(ii)
Suppose first that $u\in U_2^{(8)}$.
By the definition of $U_2^{(8)}$, we have
$|N_{U_1}(u)|\ge (1-8\varepsilon_2)|U_1|.$
Moreover, since $d(H,K_{U_1,U_2})\le \varepsilon h$,
the number of edges inside $U_2$ and the number of edges incident with $R$ are together at most $\varepsilon h$. Hence
\[d_H(u)\le |U_1|+d(H,K_{U_1,U_2})\le |U_1|+\varepsilon h.\]
Since $|U_1||U_2|\le (1+\varepsilon)h$ and $|U_1|\le |U_2|$,
we have $|U_1|\le \sqrt{(1+\varepsilon)h}$.
Therefore, for sufficiently large $h$ and $\varepsilon\ll \varepsilon_0$,
we conclude that $d_H(u)<\varepsilon_0 h.$
By Lemma \ref{Lem4.4}, we obtain $2h x_u^2\ge (1-2\varepsilon_0)d_H(u).$
Since $d_H(u)\ge |N_{U_1}(u)|\ge (1-8\varepsilon_2)|U_1|,$
we have
$2h x_u^2\ge (1-2\varepsilon_0)(1-8\varepsilon_2)|U_1|.$
Consequently,
\begin{align}\label{align-002R}
x_u^2 \ge \frac{(1-2\varepsilon_0)(1-8\varepsilon_2)|U_1|}{2h}
\geq (1-10\varepsilon_2)\frac{|U_1|}{2h}.
\end{align}

Suppose then that $u\in U_1^{(8)}$.
By the definition of $U_1^{(8)}$,
we have $|N_{U_2}(u)|\ge (1-8\varepsilon_2)|U_2|.$
Since $|S_2^{(8)}|\le \varepsilon_2|U_2|$, it follows that
$|N_{U_2^{(8)}}(u)|\ge |N_{U_2}(u)|-|S_2^{(8)}|\ge (1-9\varepsilon_2)|U_2|.$
For every $v\in U_2^{(8)}$, by \eqref{align-002R} we have
$x_v^2\ge(1-10\varepsilon_2)\frac{|U_1|}{2h}$.
Using the eigenvalue equation at $u$,
we obtain $$\rho x_u=\sum_{v\in N_H(u)}x_v\ge\sum_{v\in N_{U_2^{(8)}}(u)}x_v.$$
Consequently,
$\rho x_u\ge |N_{U_2^{(8)}}(u)|\sqrt{(1-10\varepsilon_2)\frac{|U_1|}{2h}}.$
Squaring both sides and using $\rho^2\le (1+2\varepsilon)h$
 and $|N_{U_2^{(8)}}(u)|\geq (1-9\varepsilon_2)|U_2|$, we get
\begin{align*}
x_u^2
\geq
\frac{(1-9\varepsilon_2)^2(1-10\varepsilon_2)|U_1||U_2|}{(1+2\varepsilon)h} \cdot \frac{|U_2|}{2h}
\geq (1-30\varepsilon_2)\frac{|U_2|}{2h}.
\end{align*}
where the last inequality holds since $\varepsilon\ll \varepsilon_0\ll \varepsilon_2$.

\medskip

(iii)
Summing over all $v\in U_2^{(8)}$, and using $|U_2^{(8)}|\ge (1-\varepsilon_2)|U_2|$ and (ii), we get
\[
\sum_{v\in U_2^{(8)}}x_v^2
\ge
|U_2^{(8)}|\cdot \frac{(1-30\varepsilon_2)|U_1|}{2h}
\ge
\frac{(1-\varepsilon_2)(1-30\varepsilon_2)|U_1||U_2|}{2h}.
\]
Since $|U_1||U_2|\ge (1-\varepsilon)h,$ we obtain
$\sum_{v\in U_2^{(8)}}x_v^2\ge
\frac{(1-\varepsilon_2)(1-30\varepsilon_2)(1-\varepsilon)}{2}.$
Since $\varepsilon\ll \varepsilon_0\ll \varepsilon_2,$ we finally have
$\sum_{v\in U_2^{(8)}}x_v^2 \geq \frac12-16\varepsilon_2.$

Summing over all $u\in U_1^{(8)}$, and using $|U_1^{(8)}|\ge (1-\varepsilon_2)|U_1|$ and (ii), we get
\[
\sum_{u\in U_1^{(8)}}x_u^2
\ge
|U_1^{(8)}|\cdot (1-30\varepsilon_2)\frac{|U_2|}{2h}
\ge
\frac{(1-\varepsilon_2)(1-30\varepsilon_2)|U_1||U_2|}{2h}
\geq \frac12-20\varepsilon_2.
\]
where the last inequality holds as $|U_1||U_2|\ge (1-\varepsilon)h$.
Therefore, for each $i\in {1,2}$, we have $\sum_{u\in U_i^{(8)}}x_u^2 \geq \frac12-20\varepsilon_2$.
Consequently,
\[
\sum_{u\in U_i}x_u^2 \le 1 - \sum_{u\in U_{3-i}^{(8)}}x_u^2 \le 1 - \Big(\frac12-20\varepsilon_2\Big) = \frac12+20\varepsilon_2,
\]
which completes the proof.
\end{proof}

\begin{lem}\label{Lem4.11C}
We have $R=\varnothing$.
\end{lem}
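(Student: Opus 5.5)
We argue by contradiction. Suppose some $w\in R$ exists. We play the lower bound on $x_w$ from Lemma~\ref{Lem4.4} against an upper bound from the eigenvalue equation. The factor $\tfrac12$ in Lemma~\ref{Lem4.5}(ii) is what should produce a contradiction of the form $\rho^2<h$.

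\emph{Step 1 (the lower bound).} Every edge at $w\in R$ lies outside $K_{U_1,U_2}$, so $1\le d_H(w)\le d(H,K_{U_1,U_2})\le\varepsilon h<\varepsilon_0 h$. Here $d_H(w)\ge 1$ because $H$ has no isolated vertices. Lemma~\ref{Lem4.4} then gives $2hx_w^2\ge(1-2\varepsilon_0)d_H(w)$.

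\emph{Step 2 (the upper bound).} By the eigenvalue equation and Cauchy--Schwarz,
\[
\rho^2x_w^2\le d_H(w)\sum_{v\in N_H(w)}x_v^2 .
\]
We split $N_H(w)$ into $N_{U_1}(w)$, $N_{U_2}(w)$ and $N_R(w)$ and aim for
\[
\sum_{v\in N_H(w)}x_v^2\le \tfrac14+O(\varepsilon_2).
\]
\begin{itemize}
\item The $R$-part is at most $\sum_{v\in R}x_v^2\le 1-\sum_i\sum_{U_i^{(8)}}x_v^2\le 40\varepsilon_2$, by Lemma~\ref{Lem4.10C}(iii).
\item By Lemma~\ref{Lem4.9C}, for some $i$ we have $|N_{U_i}(w)|\le 6\varepsilon_2|U_i|$. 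By Lemma~\ref{Lem4.5}(ii), $|N_{U_{3-i}}(w)|\le\tfrac12|U_{3-i}|$.
\item For a part $U_j$ we show that every $x_v^2$ with $v\in U_j$ is at most $(1+O(\sqrt{\varepsilon_2}))\,\bar x_j^2$, where $\bar x_j^2$ is the average of $x_u^2$ over $U_j^{(8)}$. This uses Lemma~\ref{Lem4.10C}(i): any entry in $U_j$ exceeds any entry in $U_j^{(8)}$ by at most $16\varepsilon_2x_{u^*}$, and entries in $U_j^{(8)}$ differ pairwise by at most $16\varepsilon_2x_{u^*}$.
\item Given this, a set of at most $\tfrac12|U_{3-i}|$ vertices of $U_{3-i}$ carries weight at most $(\tfrac12+O(\sqrt{\varepsilon_2}))\sum_{U_{3-i}}x_v^2\le\tfrac14+O(\sqrt{\varepsilon_2})$.
\item Similarly, $6\varepsilon_2|U_i|$ vertices of $U_i$ carry weight $O(\varepsilon_2)$.
\end{itemize}

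\emph{Step 3 (conclusion).} Combining Steps 1 and 2 and cancelling $d_H(w)>0$ gives
\[
\rho^2\le \frac{2h\bigl(\tfrac14+O(\sqrt{\varepsilon_2})\bigr)}{1-2\varepsilon_0}<h,
\]
which contradicts $\rho>\sqrt h$ from Lemma~\ref{Lem4.2}. Hence $R=\varnothing$.

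\emph{Main obstacle.} The hard step is the uniformity claim in Step 2. The error term in Lemma~\ref{Lem4.10C}(i) is measured against the global maximum $x_{u^*}$, not against the typical entry of $U_j$. When $|U_1|\ll|U_2|$, entries in $U_2$ have size about $\sqrt{|U_1|/2h}$, which may be much smaller than $x_{u^*}$; so $16\varepsilon_2x_{u^*}$ need not be negligible there. I would first bound $x_{u^*}$ as follows. If $u^*\in U_i$, then Lemma~\ref{Lem4.9C} makes its neighbourhood essentially $U_{3-i}$ plus $O(\varepsilon h)$ exceptional edges. The eigenvalue equation together with Lemma~\ref{Lem4.10C}(ii),(iii) should then give $x_{u^*}^2\le(1+O(\varepsilon_2))|U_{3-i}|/2h$.

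If the imbalance still ruins the estimate for the smaller-weight part, I would fall back on a cruder route that avoids pointwise control. Using $\rho x_v=\sum_{N(v)}x$ for $v\in U_j$, the entries of $U_j$ are governed by sums over $U_{3-j}$, which are nearly uniform across $U_j$ because $|S_j^{(8)}|\le\varepsilon_2|U_j|$. From this I would show directly that a half-sized subset of $U_j$ carries at most about half the $\ell_2$-mass of $U_j$.

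It is essential to keep the factor $\tfrac12$ from Lemma~\ref{Lem4.5}(ii). A direct bound using only Lemma~\ref{Lem4.10C}(iii) yields merely $\rho^2\le(1+O(\varepsilon_2))h$, which is not a contradiction.
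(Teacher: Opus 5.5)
Your skeleton matches the paper's proof: the lower bound from Lemma~\ref{Lem4.4}, Cauchy--Schwarz, the split of $N_H(w)$ via Lemma~\ref{Lem4.9C} and Lemma~\ref{Lem4.5}(ii), and $\sum_{v\in R}x_v^2\le 40\varepsilon_2$. However, the step you flag yourself is a genuine gap, and your proposed repairs do not close it.

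\emph{Why the pointwise route fails.} The uniformity claim in Step~2 cannot be extracted from Lemma~\ref{Lem4.10C}(i) for the larger part. By Lemma~\ref{Lem4.10C}(ii) and maximality, $x_{u^*}^2\ge x_u^2\ge(1-30\varepsilon_2)|U_2|/2h$ for every $u\in U_1^{(8)}$. So the error $16\varepsilon_2x_{u^*}$ is always at least of order $\varepsilon_2\sqrt{|U_2|/h}$, while typical entries of $U_2$ are of order $\sqrt{|U_1|/h}$. Nothing available at this point bounds $|U_2|/|U_1|$; Lemma~\ref{LEM4.8B} only gives $|U_1|\ge 30s^2$. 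Hence no upper bound on $x_{u^*}$ can make this error negligible. Moreover, Lemma~\ref{Lem4.12C} itself uses $R=\varnothing$, so it is not available here. The paper obtains pointwise upper bounds on $U_2$ only later, in Claim~\ref{CLA5.9}(ii), and only under $|U_2|\le 100|U_1|$. The same defect affects your claim that $6\varepsilon_2|U_i|$ vertices of $U_i$ carry weight $O(\varepsilon_2)$ when $i=2$. Your fallback paragraph does not explain how a half-sized subset would be shown to carry at most half the mass.

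\emph{The missing idea.} Never bound the neighbours from above pointwise; bound the non-neighbours from below instead.
\begin{itemize}
\item Since $|N_{U_{3-i}}(w)|\le\frac12|U_{3-i}|$ and $|S_{3-i}^{(8)}|\le\varepsilon_2|U_{3-i}|$, at least $(\frac12-\varepsilon_2)|U_{3-i}|$ vertices of $U_{3-i}^{(8)}$ are non-neighbours of $w$.
\item Each such vertex has $x_v^2\ge(1-30\varepsilon_2)|U_i|/2h$ by Lemma~\ref{Lem4.10C}(ii). Using $|U_1||U_2|\ge(1-\varepsilon)h$, they carry mass at least $\frac14-9\varepsilon_2$.
\item Subtracting this from $\sum_{v\in U_{3-i}}x_v^2\le\frac12+20\varepsilon_2$ (Lemma~\ref{Lem4.10C}(iii)) gives $\sum_{v\in N_{U_{3-i}}(w)}x_v^2\le\frac14+30\varepsilon_2$.
\item The same argument, applied to the at least $(1-7\varepsilon_2)|U_i|$ non-neighbours in $U_i^{(8)}$, gives $\sum_{v\in N_{U_i}(w)}x_v^2\le 40\varepsilon_2$.
\end{itemize}
The lower bound in Lemma~\ref{Lem4.10C}(ii) comes from Lemma~\ref{Lem4.4} and does not depend on the ratio $|U_2|/|U_1|$, so this works in every regime. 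The total is then at most $\frac14+110\varepsilon_2<\frac{1-2\varepsilon_0}{2}$. This contradicts the bound $\sum_{v\in N_H(w)}x_v^2\ge\frac{1-2\varepsilon_0}{2}$ from your Steps~1 and~2, exactly as you intended.
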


\begin{proof}
Suppose to the contrary that there exists a vertex $u\in R$.
By the definition of $H$, we have $d_H(u)\geq 1$.
By Lemma~\ref{Lem4.9C}, there exists some $i\in\{1,2\}$ such that
$|N_{U_i}(u)|\le 6\varepsilon_2|U_i|.$
Let $j=3-i$.
Since $u\in R$, Lemma~\ref{Lem4.5} (ii) gives
$|N_{U_j}(u)|\le \frac12|U_j|.$

Since every edge incident with $u$ belongs to
$E(H)\triangle E(K_{U_1,U_2})$, we have
$$
d_H(u)
\le d(H,K_{U_1,U_2})
\le \varepsilon h
<\varepsilon_0 h.
$$
Hence, by Lemma~\ref{Lem4.4},
$2h x_u^2\ge (1-2\varepsilon_0)d_H(u).$
On the other hand, by the eigenvalue equation and the
Cauchy--Schwarz inequality,
$$
\begin{aligned}
\rho^2x_u^2
=\left(\sum_{v\in N_H(u)}x_v\right)^2
\le
d_H(u)\sum_{v\in N_H(u)}x_v^2.
\end{aligned}
$$
By Lemma~\ref{Lem4.2}, we have $\rho^2>h$. Hence,
\begin{align}\label{align-R1}
\sum_{v\in N_H(u)}x_v^2
\ge \frac{\rho^2x_u^2}{d_H(u)}
\ge\frac{1-2\varepsilon_0}{2}.
\end{align}

We next derive an upper bound on the same quantity.
Since
$|U_i^{(8)}|\ge(1-\varepsilon_2)|U_i|$
and
$|N_{U_i}(u)|\le6\varepsilon_2|U_i|,$
we obtain
$|U_i^{(8)}\setminus N_{U_i}(u)|\ge(1-7\varepsilon_2)|U_i|.$
By Lemma \ref{Lem4.10C} (ii), every $v\in U_i^{(8)}$ satisfies
$x_v^2\ge\frac{(1-30\varepsilon_2)|U_j|}{2h}.$
Therefore, using
$|U_1||U_2|\ge(1-\varepsilon)h$, we get
$$
\begin{aligned}
\sum_{v\in U_i^{(8)}\setminus N_{U_i}(u)}x_v^2
&\ge
(1-7\varepsilon_2)|U_i|
\frac{(1-30\varepsilon_2)|U_j|}{2h}
\ge\frac{(1-7\varepsilon_2)(1-30\varepsilon_2)(1-\varepsilon)}{2}
\ge\frac12-19\varepsilon_2,
\end{aligned}
$$
where the last inequality holds since
$\varepsilon\ll\varepsilon_2\ll1$.
Together with Lemma~\ref{Lem4.10C} (iii), this yields
\begin{align}\label{align-R2}
\sum_{v\in N_{U_i}(u)}x_v^2
\le
\sum_{v\in U_i}x_v^2
-
\sum_{v\in U_i^{(8)}\setminus N_{U_i}(u)}x_v^2
\le
\left(\frac12+20\varepsilon_2\right)
-\left(\frac12-19\varepsilon_2\right)
\le
40\varepsilon_2.
\end{align}

Similarly, since
$|N_{U_j}(u)|\le \frac12|U_j|$
and
$|U_j^{(8)}|\ge(1-\varepsilon_2)|U_j|,$
we have
$|U_j^{(8)}\setminus N_{U_j}(u)|
\ge (\frac12-\varepsilon_2)|U_j|.$
Again by Lemma~\ref{Lem4.10C} (ii),
$$
\begin{aligned}
\sum_{v\in U_j^{(8)}\setminus N_{U_j}(u)}x_v^2
\ge
\left(\frac12-\varepsilon_2\right)|U_j|
\frac{(1-30\varepsilon_2)|U_i|}{2h}
\ge
\frac{\left(\frac12-\varepsilon_2\right)
(1-30\varepsilon_2)(1-\varepsilon)}{2}
\ge
\frac14-9\varepsilon_2,
\end{aligned}
$$
where the last inequality follows from
$\varepsilon\ll\varepsilon_2\ll1$.
Hence Lemma~\ref{Lem4.10C} (iii) gives
\begin{align}\label{align-R3}
\sum_{v\in N_{U_j}(u)}x_v^2
&\le
\sum_{v\in U_j}x_v^2
-
\sum_{v\in U_j^{(8)}\setminus N_{U_j}(u)}x_v^2
\le
\left(\frac12+20\varepsilon_2\right)
-
\left(\frac14-9\varepsilon_2\right)
\le
\frac14+30\varepsilon_2.
\end{align}

Moreover, by Lemma~\ref{Lem4.10C} (iii),
\begin{align}\label{align-R4}
\sum_{v\in R}x_v^2
&=
1-\sum_{v\in U_1}x_v^2-\sum_{v\in U_2}x_v^2
\le
1-\sum_{v\in U_1^{(8)}}x_v^2-\sum_{v\in U_2^{(8)}}x_v^2
\le 40\varepsilon_2.
\end{align}

Combining \eqref{align-R2}, \eqref{align-R3}, and
\eqref{align-R4}, we obtain
$$
\begin{aligned}
\sum_{v\in N_H(u)}x_v^2
=
\sum_{v\in N_{U_i}(u)}x_v^2+\sum_{v\in N_{U_j}(u)}x_v^2+\sum_{v\in N_R(u)}x_v^2
\le
40\varepsilon_2+\left(\frac14+30\varepsilon_2\right)+40\varepsilon_2
\le
\frac14+110\varepsilon_2.
\end{aligned}
$$
Since $\varepsilon_0\ll\varepsilon_2\ll1$, we may choose the
parameters sufficiently small so that
$\frac14+110\varepsilon_2<\frac{1-2\varepsilon_0}{2}.$
This contradicts \eqref{align-R1}.
Therefore, $R=\varnothing$.
\end{proof}

\begin{lem}\label{Lem4.12C}
We have $u^*\in U_1^{(8)}\cup U_2^{(8)}$.
Moreover, if $u^*\in U_i^{(8)}$ for some $i\in \{1,2\}$,
then $x_{u^*}^2\leq (1+80\varepsilon_2) \frac{|U_{3-i}|}{2h}$.
\end{lem}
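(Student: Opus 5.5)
By Lemma~\ref{Lem4.11C} we have $R=\varnothing$, so $V(H)=U_1\cup U_2$ and $u^*$ lies in some $U_i$. The plan is to show first that $u^*$ cannot lie in $S_i^{(8)}$, and then to bound $x_{u^*}$ using the eigenvalue equation together with Lemma~\ref{Lem4.10C}.

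\textbf{Step 1: $u^*\in U_i^{(8)}$.} Suppose $u^*\in S_i^{(8)}\subseteq U_i$. Pick any $u_2\in U_i^{(8)}$; this set is nonempty because $|U_i^{(8)}|\ge(1-\varepsilon_2)|U_i|$. Lemma~\ref{Lem4.10C}(i) then gives $x_{u_2}\ge(1-16\varepsilon_2)x_{u^*}$, so $u^*$ is essentially as heavy as a typical vertex of $U_i$. Now use the eigenvalue equation at $u^*$:
\[
\rho x_{u^*}=\sum_{v\in N_{U_{3-i}}(u^*)}x_v+\sum_{v\in N_{U_i}(u^*)}x_v .
\]
The second sum involves at most $6\varepsilon_2|U_i|$ vertices by Lemma~\ref{Lem4.9C}. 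The first sum involves at most $(1-8\varepsilon_2)|U_{3-i}|$ vertices. Applying Cauchy--Schwarz together with Lemma~\ref{Lem4.10C}(iii), $\sum_{U_{3-i}}x_v^2\le\frac12+20\varepsilon_2$, bounds the first sum by $\sqrt{(1-8\varepsilon_2)|U_{3-i}|(\frac12+20\varepsilon_2)}$. The second sum is bounded by $\sqrt{6\varepsilon_2|U_i|\cdot(\frac12+20\varepsilon_2)}$.

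Compare this with the lower bound $x_{u^*}^2\ge x_{u_2}^2\ge (1-30\varepsilon_2)|U_{3-i}|/(2h)$ from Lemma~\ref{Lem4.10C}(ii), and use $\rho^2\ge h$ and $|U_1||U_2|\approx h$. The crude Cauchy--Schwarz bound loses only a constant factor and does not produce a contradiction by itself, so the estimate must be sharper. I would bound the neighbor sum by $|N_{U_{3-i}}(u^*)|\cdot\max x$ only after first showing that the entries $x_v$ for $v\in U_{3-i}$ are nearly uniform. Lemma~\ref{Lem4.10C}(i) gives upper control of every $x_v$ by roughly $x_{u_2'}+16\varepsilon_2 x_{u^*}$ for $u_2'\in U_{3-i}^{(8)}$. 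Writing $\bar x_{3-i}$ for the typical value on $U_{3-i}^{(8)}$, this gives
\[
\rho x_{u^*}\le (1-8\varepsilon_2)|U_{3-i}|\,\bar x_{3-i}+O(\varepsilon_2)\,|U_{3-i}|\,x_{u^*}+6\varepsilon_2|U_i|\,x_{u^*}.
\]
By contrast, a vertex $u_2\in U_i^{(8)}$ whose neighborhood almost covers $U_{3-i}^{(8)}$ satisfies $\rho x_{u_2}\ge(1-9\varepsilon_2)|U_{3-i}|(\bar x_{3-i}-16\varepsilon_2x_{u^*})$. The deficit of $8\varepsilon_2|U_{3-i}|$ in the neighborhood of $u^*$ competes against error terms that are also of order $\varepsilon_2$. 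This bookkeeping of constants is the main obstacle. If the direct comparison does not give a contradiction, I would use a cleaner route: run the two-step eigen-equation as in the proof of Lemma~\ref{Lem4.10C}(i), but compare $u^*$ against the average of $u_2$ over $U_i^{(8)}$, and check that the missing $8\varepsilon_2$ fraction of neighbors beats the $6\varepsilon_2$ error term. If necessary, I would pass to $S_i^{(10)}$ versus $S_i^{(8)}$ to widen the gap.

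\textbf{Step 2: the upper bound.} With $u^*\in U_i^{(8)}$, write
\[
\rho x_{u^*}=\sum_{v\in N_{U_{3-i}}(u^*)}x_v+\sum_{v\in N_{U_i}(u^*)}x_v .
\]
For the first sum, apply Cauchy--Schwarz with $|N_{U_{3-i}}(u^*)|\le|U_{3-i}|$ and $\sum_{U_{3-i}}x_v^2\le\frac12+20\varepsilon_2$ to get at most $\sqrt{|U_{3-i}|(\frac12+20\varepsilon_2)}$. For the second sum, use $|N_{U_i}(u^*)|\le 6\varepsilon_2|U_i|$ together with $x_v\le x_{u^*}$. Squaring and using $\rho^2>h$ gives roughly $h\,x_{u^*}^2\le (1+O(\varepsilon_2))|U_{3-i}|/2$, provided the term $6\varepsilon_2|U_i|x_{u^*}$ is negligible relative to $\rho x_{u^*}$. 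This is a real concern when $i=2$, since $|U_2|$ could be much larger than $\sqrt h$. In that case I would instead bound the $U_i$-contribution through the edges inside $U_i$, which number at most $\varepsilon h$, via $\sum_{v\in N_{U_i}(u^*)}x_v\le\sqrt{d_{U_i}(u^*)\sum x_v^2}$ with $d_{U_i}(u^*)\le \varepsilon h$. Finally, I would adjust the constants so that the result reads $x_{u^*}^2\le(1+80\varepsilon_2)|U_{3-i}|/(2h)$.
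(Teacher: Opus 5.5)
Step~1 carries the real content of the lemma, and your proposal does not establish it. You concede that the one-step eigen-equation does not close, and it cannot. It measures $x_{u^*}$ against entries on $U_{3-i}$, and the relative control from Lemma~\ref{Lem4.10C} has errors ($16\varepsilon_2$, $20\varepsilon_2$, $30\varepsilon_2$) larger than the deficit $8\varepsilon_2$ you want to exploit. Comparing with a single $u_2\in U_i^{(8)}$ gives no gap at all, because the thresholds defining $S_i^{(8)}$ and $U_i^{(8)}$ coincide: $u_2$ may have just one more neighbour in $U_{3-i}$ than $u^*$.

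An averaged two-step comparison over $U_i^{(8)}$ can be pushed through, but you leave exactly the bookkeeping it needs as something to ``check''. You would have to show two things. First, each of the at least $7\varepsilon_2|U_{3-i}|$ vertices $v\in U_{3-i}^{(8)}$ missed by $u^*$ has $\sum_{z\in N_{U_i}(v)}x_z\ge(1-O(\varepsilon_2))|U_i|x_{u^*}$. Second, an average $u_2$ misses only $O(\varepsilon)|U_{3-i}|$ cross neighbours. The margin that results is only $7\varepsilon_2$ against $6\varepsilon_2$.

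The idea you are missing is that no comparison vertex is needed. Since $x_z\le x_{u^*}$ for all $z$, the identity $\rho^2x_{u^*}=\sum_{v\in N_H(u^*)}\sum_{z\in N_H(v)}x_z$ gives $\rho^2\le W$, where $W$ is the number of walks of length two from $u^*$. Split these walks by whether the second edge lies in $K_{U_1,U_2}$. Using $|N_{U_{3-i}}(u^*)|\le(1-8\varepsilon_2)|U_{3-i}|$ and $|N_{U_i}(u^*)|\le 6\varepsilon_2|U_i|$ (Lemma~\ref{Lem4.9C}), you get $W\le(1-2\varepsilon_2)|U_1||U_2|+2d(H,K_{U_1,U_2})\le\bigl((1-2\varepsilon_2)(1+\varepsilon)+2\varepsilon\bigr)h<h$. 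This contradicts $\rho^2>h$ from Lemma~\ref{Lem4.2}. The absolute bound $\rho^2>h\approx|U_1||U_2|$ plays the role of your ``typical vertex'', so no eigenvector uniformity enters.

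Step~2 also fails as written when $i=2$ and $|U_2|\gg\sqrt h$. Your remedy, Cauchy--Schwarz with $d_{U_2}(u^*)\le\varepsilon h$, produces an error term of order $\sqrt{\varepsilon h}$. That swamps the target main term $\sqrt{|U_1|/2}<h^{1/4}$, so you only get $x_{u^*}\lesssim\sqrt{\varepsilon/2}$, which is useless.

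The eigen-equation is unnecessary here. Let $u_0$ minimize $x$ over $U_i^{(8)}$. Lemma~\ref{Lem4.10C}(iii) gives $(1-\varepsilon_2)|U_i|x_{u_0}^2\le\frac12+20\varepsilon_2$. Lemma~\ref{Lem4.10C}(i) gives $(1-16\varepsilon_2)x_{u^*}\le x_{u_0}$, which is the very inequality you already wrote down in Step~1. Finally, $|U_i|\ge(1-\varepsilon)h/|U_{3-i}|$. Together these yield $x_{u^*}^2\le(1+17\varepsilon_2)^2(1+44\varepsilon_2)\frac{|U_{3-i}|}{2h}\le(1+80\varepsilon_2)\frac{|U_{3-i}|}{2h}$ for $i=1$ and $i=2$ alike. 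Your Cauchy--Schwarz estimate does work for $i=1$, since then $|U_i|\le\sqrt{(1+\varepsilon)h}$.
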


\begin{proof}
Recall that $u^*\in V(H)$ is a vertex such that $x_{u^*}=\max_{v\in V(H)}x_v$.
We first prove that $u^*\notin S_1^{(8)}$.
Suppose otherwise. Then
$|N_{U_2}(u^*)|\le(1-8\varepsilon_2)|U_2|.$
Since $u^*\notin R$, Lemma~\ref{Lem4.5} implies
$|N_{U_2}(u^*)|\geq \frac12|U_2|>6\varepsilon_2|U_2|.$
Applying Lemma~\ref{Lem4.9C}, the small degree direction
cannot be $U_2$, and hence
$|N_{U_1}(u^*)|\le6\varepsilon_2|U_1|.$
Applying the eigenvalue equation twice, we obtain
\[\rho^2x_{u^*}=\sum_{v\in N_H(u^*)}\sum_{z\in N_H(v)}x_z.
\]
Since $x_z\le x_{u^*}$ for every $z\in V(H)$, it suffices to estimate the number of pairs $(v,z)$ appearing in the above sum.

We first consider the contribution from edges between $U_1$ and $U_2$.
Since $|N_{U_2}(u^*)|\le (1-8\varepsilon_2)|U_2|$,
we have
\[
\sum_{v\in N_{U_2}(u^*)}\sum_{z\in N_{U_1}(v)}x_z \le (1-8\varepsilon_2)|U_2|\cdot |U_1|\,x_{u^*},
\]
and
\[
\sum_{v\in N_{U_1}(u^*)} \sum_{z\in N_{U_2}(v)}x_z
\le  6\varepsilon_2|U_1||U_2|x_{u^*}.
\]

It remains to estimate the contribution from edges that do not belong to the complete bipartite graph $K_{U_1,U_2}$.
Such edges are contained in $E(H)\triangle E(K_{U_1,U_2}),$
and hence
\[
\sum_{v\in N_H(u^*)}\sum_{\substack{z\in N_H(v)\\ vz\notin E(K_{U_1,U_2})}}x_z
\le
2d(H,K_{U_1,U_2})x_{u^*}.
\]
The factor $2$ appears because each edge may be counted from both orientations.

Combining the above estimates,
we obtain \[ \rho^2 x_{u^*} \le \bigl((1-8\varepsilon_2)+6\varepsilon_2\bigr) |U_1||U_2|\,x_{u^*} + 2d(H,K_{U_1,U_2})\,x_{u^*}. \]
Cancelling $x_{u^*}$ and using $|U_1||U_2|\le (1+\varepsilon)h $
 and $d(H,K_{U_1,U_2})\le \varepsilon h,$
we deduce that
\[ \rho^2 \le (1-2\varepsilon_2)(1+\varepsilon)h+2\varepsilon h. \]
Since $\varepsilon\ll \varepsilon_2$, the right-hand side is strictly less than $h$, implying that $\rho^2<h.$
This contradicts Lemma \ref{Lem4.2}.
Therefore, $u^*\notin S_1^{(8)}$.
Similarly, we also get that $u^*\notin S_2^{(8)}$.

Since every vertex outside $U_1^{(8)}\cup U_2^{(8)}$ belongs to
$R\cup S_1^{(8)}\cup S_2^{(8)}$, and we have already shown that
$R=\varnothing$, $u^*\notin S_1^{(8)}$, and $u^*\notin S_2^{(8)}$, it follows that
$u^*\in U_1^{(8)}\cup U_2^{(8)}.$

Assume that $u^*\in U_i^{(8)}$ for some $i\in \{1,2\}$.
Let $u_0 \in U_i^{(8)}$ be a vertex that minimizes $x_u$ over $U_i^{(8)}$,
so that $x_{u_0} = \min_{u \in U_i^{(8)}} x_u$.
By Lemma~\ref{Lem4.10C}~(iii),
we have $\sum_{u \in U_i} x_u^2 \le \frac{1}{2} + 20\varepsilon_2$,
which implies that
\[
(1-\varepsilon_2)|U_i| x_{u_0}^2
\le |U_i^{(8)}| x_{u_0}^2
\le \sum_{u \in U_i^{(8)}} x_u^2
\le \frac{1}{2} + 20\varepsilon_2.
\]
Using the relation $(1-\varepsilon)h \le |U_1||U_2| \le (1+\varepsilon)h$, we obtain
\begin{align*}
x_{u_0}^2
&\le \frac{\frac{1}{2}+20\varepsilon_2}{(1-\varepsilon_2)|U_i|}
\le \frac{\frac{1}{2}+20\varepsilon_2}{1-\varepsilon_2} \cdot \frac{|U_{3-i}|}{(1-\varepsilon)h}.
\end{align*}
Applying Lemma \ref{Lem4.10C} (i) with $u_1=u^*$ and $u_2=u_0$ yields that
$x_{u^*}-x_{u_0}\leq 16\varepsilon_2 x_{u^*}$,
which implies that $$x_{u^*}^2\leq (1+17\varepsilon_2)^2x_{u_0}^2
\leq (1+17\varepsilon_2)^2\cdot \left(1+44\varepsilon_2\right) \frac{|U_{3-i}|}{2h}
\leq \left(1+80\varepsilon_2\right) \frac{|U_{3-i}|}{2h}.$$
This completes the proof of Lemma \ref{Lem4.12C}.
\end{proof}

\section{Proof of Theorem  \ref{thm1.1}}\label{sec5A}

In this section, we complete the proof of Theorem \ref{thm1.1}.

\begin{proof}[\textbf{Proof of Theorem \ref{thm1.1}}]
We proceed by contradiction and distinguish two cases.

\medskip
\noindent{{\bf{Case 1.}}} $|U_2|\geq 100|U_1|$.
\medskip

Since $|U_2| \geq 100|U_1|$, we have
$
|U_2|^2 \geq 100|U_1||U_2|
\geq 100(1-\varepsilon)h
$
and
$
100|U_1|^2 \leq |U_1||U_2|
\leq (1+\varepsilon)h.
$
Consequently,
$|U_2| \geq 10\sqrt{(1-\varepsilon)h}$ and
$|U_1| \leq \frac{1}{10}\sqrt{(1+\varepsilon)h}$.
Since $\varepsilon$ is sufficiently small, it follows that
\begin{align}\label{ali-02}
|U_2|\geq 9\sqrt{h}
\quad\text{and}\quad
|U_1|\leq \frac{1}{9}\sqrt{h}.
\end{align}

\begin{claim}\label{CLA5.1}
For every vertex $u\in U_2$, we have
    $x_u\leq \frac{1}{4} x_{u^*}$.
\end{claim}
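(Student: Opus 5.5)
The plan is to compare the eigenvector mass of $U_2$ with the normalization $\|\mathbf{x}\|_2=1$. First I will show that $u^*$ lies on the small side. Then I will argue that one vertex of $U_2$ with a large entry would force all of $U_2^{(8)}$ to have large entries, which is too much mass.

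\emph{Step 1: $u^*\in U_1^{(8)}$.} By Lemma~\ref{Lem4.12C}, $u^*\in U_1^{(8)}\cup U_2^{(8)}$. Suppose $u^*\in U_2^{(8)}$. Then Lemma~\ref{Lem4.12C} gives
\[
x_{u^*}^2\le (1+80\varepsilon_2)\frac{|U_1|}{2h}.
\]
On the other hand, $U_1^{(8)}\neq\varnothing$ because $|S_1^{(8)}|\le\varepsilon_2|U_1|<|U_1|$. For any $u\in U_1^{(8)}$, Lemma~\ref{Lem4.10C}(ii) and the Case~1 assumption $|U_2|\ge 100|U_1|$ give
\[
x_u^2\ge (1-30\varepsilon_2)\frac{|U_2|}{2h}\ge 100(1-30\varepsilon_2)\frac{|U_1|}{2h}>x_{u^*}^2.
\]
This contradicts the maximality of $x_{u^*}$. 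Hence $u^*\in U_1^{(8)}$. Applying Lemma~\ref{Lem4.10C}(ii) to $u^*$ itself then yields the lower bound
\[
x_{u^*}^2\ge (1-30\varepsilon_2)\frac{|U_2|}{2h}.
\]

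\emph{Step 2: propagation inside $U_2$.} Suppose, for contradiction, that some $v\in U_2$ satisfies $x_v>\frac14 x_{u^*}$. Apply the first assertion of Lemma~\ref{Lem4.10C}(i) with $i=2$, $u_1=v$ and any $u_2\in U_2^{(8)}$. This gives $x_{u_2}\ge x_v-16\varepsilon_2x_{u^*}$, so every $u_2\in U_2^{(8)}$ satisfies
\[
x_{u_2}>\Big(\frac14-16\varepsilon_2\Big)x_{u^*}.
\]
The key point is that Lemma~\ref{Lem4.10C}(i) lets an arbitrary $u_1\in U_2$ be compared with the typical vertices. This avoids any direct control of $d_{U_2}(v)$, which could be as large as $\varepsilon h$ and would ruin a naive estimate through the eigenvalue equation at $v$.

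\emph{Step 3: too much mass.} Sum the bound from Step~2 over $U_2^{(8)}$, using $|U_2^{(8)}|\ge(1-\varepsilon_2)|U_2|$ and the lower bound on $x_{u^*}^2$ from Step~1:
\[
\sum_{u\in U_2^{(8)}}x_u^2\ \ge\ (1-\varepsilon_2)\Big(\frac14-16\varepsilon_2\Big)^2(1-30\varepsilon_2)\frac{|U_2|^2}{2h}.
\]
By \eqref{ali-02}, $|U_2|^2\ge 81h$, so the right-hand side is at least roughly $\frac{81}{32}>1$ once $\varepsilon_2$ is small. This contradicts $\|\mathbf{x}\|_2=1$ (or, more sharply, Lemma~\ref{Lem4.10C}(iii)). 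Therefore $x_u\le\frac14x_{u^*}$ for every $u\in U_2$.

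The main obstacle is Step~1, namely locating $u^*$ on the small side $U_1$. Once that is done, the Case~1 hypothesis makes the squared entries on $U_1^{(8)}$ a factor of order $|U_2|/|U_1|\ge 100$ larger than the upper bound Lemma~\ref{Lem4.12C} would impose if $u^*$ were in $U_2$. After that, only constant bookkeeping with $\varepsilon_2\ll1$ remains.
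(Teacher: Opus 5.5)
Your proof is correct and is essentially the paper's argument in contrapositive form. The paper bounds $\min_{u\in U_2^{(8)}}x_u$ from above using the mass bound of Lemma~\ref{Lem4.10C}(iii) together with $|U_2|^2\ge 100(1-\varepsilon)h$, compares this with $x_{u^*}^2\ge(1-30\varepsilon_2)\frac{|U_2|}{2h}$, and then transfers the bound to all of $U_2$ via Lemma~\ref{Lem4.10C}(i). Your Step~1 is valid but unnecessary, so it is not the ``main obstacle'': the lower bound on $x_{u^*}^2$ follows directly from the maximality of $x_{u^*}$ applied to any vertex of $U_1^{(8)}$, which is how the paper obtains it. The paper then derives $u^*\in U_1^{(8)}$ afterwards, as Claim~\ref{CLA5.2}.
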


\begin{proof}
Let $u_0\in U_2^{(8)}$ satisfy
$x_{u_0}=\min_{u\in U_2^{(8)}}x_u$.
By Lemma~\ref{Lem4.10C} (iii) and
$|U_2^{(8)}|\ge(1-\varepsilon_2)|U_2|$, we have
$$
(1-\varepsilon_2)|U_2|x_{u_0}^2
\le
\sum_{u\in U_2^{(8)}}x_u^2
\le
\frac12+20\varepsilon_2.
$$
Since $|U_2|^2 \geq 100|U_1||U_2| \geq 100(1-\varepsilon)h$, it follows that
$$
\begin{aligned}
x_{u_0}^2
\le
\frac{\frac12+20\varepsilon_2}
{(1-\varepsilon_2)|U_2|}
\le
\frac{\frac12+20\varepsilon_2}{1-\varepsilon_2}
\cdot
\frac{|U_2|}{100(1-\varepsilon)h}
\le
\frac1{20}\cdot
\frac{(1-30\varepsilon_2)|U_2|}{2h},
\end{aligned}
$$
where the last inequality holds since
$\varepsilon\ll\varepsilon_2\ll1$.

Choose any $v\in U_1^{(8)}$. By Lemma~\ref{Lem4.10C} (ii) and the maximality of $x_{u^*}$,
$x_{u^*}^2\ge x_v^2
\ge\frac{(1-30\varepsilon_2)|U_2|}{2h}.$
Hence
$x_{u_0}\le\frac1{\sqrt{20}}x_{u^*}.$
Finally, by Lemma~\ref{Lem4.10C} (i), for every
$u\in U_2$,
$$
x_u
\le
x_{u_0}+16\varepsilon_2x_{u^*}
\le
\left(\frac1{\sqrt{20}}+16\varepsilon_2\right)x_{u^*}
\le
\frac14x_{u^*},
$$
provided that $\varepsilon_2$ is sufficiently small.
\end{proof}

\begin{claim}\label{CLA5.2}
We have $u^*\in U_1^{(8)}$.
\end{claim}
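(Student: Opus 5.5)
By Lemma~\ref{Lem4.12C}, $u^*\in U_1^{(8)}\cup U_2^{(8)}$. It therefore suffices to rule out $u^*\in U_2^{(8)}$, and the idea is to play the upper bound of Claim~\ref{CLA5.1} against the lower bound of Lemma~\ref{Lem4.10C}~(ii).

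Suppose, for contradiction, that $u^*\in U_2^{(8)}$. Since $U_2^{(8)}\subseteq U_2$, Claim~\ref{CLA5.1} applied to $u=u^*$ gives
\[
x_{u^*}\le \tfrac14 x_{u^*}.
\]
Hence $x_{u^*}\le 0$. But $\mathbf{x}$ is a nonnegative unit vector and $x_{u^*}$ is its maximum entry, so $x_{u^*}\ge 1/\sqrt{|V(H)|}>0$. This is a contradiction, and therefore $u^*\in U_1^{(8)}$.

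For robustness I would also give a direct argument that does not lean on the way Claim~\ref{CLA5.1} was proved. In Claim~\ref{CLA5.1}, the comparison vertex $v\in U_1^{(8)}$ was used only through $x_{u^*}\ge x_v$, and that inequality holds regardless of where $u^*$ lies. So the claim is valid unconditionally, and the argument above is not circular. Alternatively, take any $v\in U_1^{(8)}$; this set is nonempty because $|U_1^{(8)}|\ge(1-\varepsilon_2)|U_1|\ge 1$ by Lemma~\ref{LEM4.8B}. Lemma~\ref{Lem4.10C}~(ii) gives $x_v^2\ge (1-30\varepsilon_2)|U_2|/(2h)$. Claim~\ref{CLA5.1} together with the bound of Lemma~\ref{Lem4.12C} for $u^*\in U_2^{(8)}$ gives
\[
x_v^2\le x_{u^*}^2\le (1+80\varepsilon_2)\frac{|U_1|}{2h}.
\]
Combining the two yields $|U_2|\le (1+200\varepsilon_2)|U_1|$, which contradicts $|U_2|\ge 100|U_1|$.

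There is essentially no obstacle here. The only point needing care is checking that Claim~\ref{CLA5.1} and the nonemptiness of $U_1^{(8)}$ do not presuppose the location of $u^*$, and both facts are established above.
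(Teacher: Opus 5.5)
Your main argument is the paper's argument: Lemma~\ref{Lem4.12C} reduces the claim to excluding $u^*\in U_2^{(8)}$, and then Claim~\ref{CLA5.1} applied to $u=u^*$ gives $x_{u^*}\le\frac14x_{u^*}$, which contradicts $x_{u^*}>0$. Your backup argument is also correct, and it does not need Claim~\ref{CLA5.1} at all. It plays the bound $x_{u^*}^2\le(1+80\varepsilon_2)\frac{|U_1|}{2h}$ from Lemma~\ref{Lem4.12C} against Lemma~\ref{Lem4.10C}~(ii) to get $|U_2|\le(1+200\varepsilon_2)|U_1|<100|U_1|$, and the only link used is $x_v\le x_{u^*}$, which is just maximality.
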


\begin{proof}
By Lemma~\ref{Lem4.12C}, we have
$u^*\in U_1^{(8)}\cup U_2^{(8)}$.
Suppose, for contradiction, that $u^*\in U_2^{(8)}$.
By Claim~\ref{CLA5.1}, every $u\in U_2^{(8)}$ satisfies
$x_u\le \frac14 x_{u^*}$.
Taking $u=u^*$, we obtain
$x_{u^*}\le \frac14 x_{u^*}$,
which contradicts $x_{u^*}>0$.
Therefore, $u^*\notin U_2^{(8)}$, and hence
$u^*\in U_1^{(8)}$.
\end{proof}

Let $E'=E(U_1)\setminus E(S_1^{(8)})$.
We claim that $|E'|\leq \varepsilon_2|U_1|$.
Indeed, suppose to the contrary that
$|E'|>\varepsilon_2|U_1|$.
For every edge $e=uv\in E'$, since
$E'=E(U_1)\setminus E(S_1^{(8)})$, at least one endpoint of $e$,
say $u$, belongs to $U_1^{(8)}$. Hence,
by the definition of $U_1^{(8)}$,
$|N_{U_2}(u)|\geq (1-8\varepsilon_2)|U_2|$.
On the other hand, since $v\in U_1$, Lemma \ref{Lem4.5} (i) gives
$|N_{U_2}(v)|\geq \frac12|U_2|$.
Moreover, by Lemma \ref{Lem4.6},
$|S_2^{(8)}|\leq \varepsilon_2|U_2|$.
Therefore,
\[
\begin{aligned}
|N_H(u)\cap N_H(v)\cap U_2^{(8)}|
\geq
|N_{U_2}(u)|+|N_{U_2}(v)|
-|U_2|-|S_2^{(8)}|
\geq
\left(\frac12-9\varepsilon_2\right)|U_2|
>\varepsilon_2|U_2|,
\end{aligned}
\]
where the last inequality holds since $\varepsilon_2$ is sufficiently small.
Thus, every edge $e\in E'$ satisfies the hypothesis of
Lemma \ref{Lem4.7B} with $j=8$.

Furthermore, the families $\mathcal{F}(e)$, $e\in E'$, are pairwise
disjoint, since in every copy of $K_{s,t}^{+}$ the added edge is the
unique edge whose deletion yields a copy of $K_{s,t}$.
Hence, by Lemma \ref{Lem4.7B},
\[
\begin{aligned}
N(K_{s,t}^+,H)
\geq \sum_{e\in E'}|\mathcal{F}(e)|
\geq
|E'|\,c_{s,t,\varepsilon_2}|U_2|
h^{\frac{s+t-3}{2}}
>
\varepsilon_2|U_1|\,
c_{s,t,\varepsilon_2}|U_2|
h^{\frac{s+t-3}{2}}
\geq
2^{\frac{s+t-1}{2}}\varepsilon_1
h^{\frac{s+t-1}{2}},
\end{aligned}
\]
which contradicts \eqref{align-002T}, since
$\varepsilon_1\ll\varepsilon_2$.
Therefore,
$|E'|\leq\varepsilon_2|U_1|$.

Since $|E'|\leq\varepsilon_2 |U_1|$ and
$|U_1^{(8)}|\geq (1-\varepsilon_2)|U_1|$, after removing all
vertices incident with edges of $E'$, at least
$|U_1^{(8)}|-2|E'| \geq (1-\varepsilon_2)|U_1|-2|E'|$
vertices remain. Denote the set of these remaining vertices by
$U_1'$.

For every $u\in U_1'$, we have
$|N_{U_2}(u)|\geq (1-8\varepsilon_2)|U_2|$.
Since
$(1-8\varepsilon_2)|U_2|>\frac{|U_1|}{4}$,
we can greedily choose a matching
$E''\subseteq E(U_1',U_2)$
of size
$\lfloor\frac{|U_1|}{4}\rfloor$.
Let $Q$ be the set of vertices in $U_1'$ not incident with
edges of $E''$. Then $Q$ is independent in $H[U_1]$, and hence
$Q$ is a clique in the complement of $H[U_1]$.

We claim that the complement of $H[U_1]$ contains a matching of size
$|E'|+|E''|.$
Indeed,
\begin{align*}
|Q|-2(|E'|+|E''|)
\geq (1-\varepsilon_2)|U_1|-4|E'|-3|E''|
>(\frac14-5\varepsilon_2){|U_1|}.
\end{align*}
Since $\varepsilon_2\ll 1$, we have
$\frac14-5\varepsilon_2>0.$
Thus
$|Q|\geq 2(|E'|+|E''|).$
Since $Q$ induces a clique in the complement of $H[U_1]$ and
$|Q|\geq 2(|E'|+|E''|)$, there exists a matching
$M\subseteq \binom{Q}{2}\setminus E(H)$
with $|M|=|E'|+|E''|$.
Let $H^\star$ be obtained from $H$ by
deleting all edges of $E'$ and $E''$, and then adding all edges
of $M$.
Clearly,
$|M|=|E'|+|E''|$,
and hence
$e(H^\star)=e(H)=h$.
Furthermore, by \eqref{ali-02},
\begin{align}\label{align-001G}
d(H,H^\star)\leq |M|+|E'|+|E''|\leq |U_1|\leq \frac{1}{9}\sqrt{h}\leq \varepsilon h.
\end{align}
For brevity, write \(\rho^\star=\rho(H^\star)\).
By the Perron-Frobenius theorem, there exists a nonnegative unit eigenvector
$\mathbf{y} = (y_1, \dots, y_n)^\top$ corresponding to $\rho^\star = \rho(H^\star)$.
Let $u^\star$ be a vertex of $H^\star$ that maximizes the entries of $\mathbf{y}$,
i.e., $y_{u^\star} = \max_{v \in V(H^\star)} y_v$.
For a vertex $v\in V(H^\star)$ and a vertex subset $X\subseteq V(H^\star)$ (possibly $v\notin X$),
we write $N^\star_X(v):=N_{H^\star}(v)\cap X$.

\begin{claim}\label{CLA5.3}
We have $\rho^\star>g_{s-1}(h)>\sqrt{h}.$
\end{claim}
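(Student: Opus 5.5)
The plan is to compare $\rho^\star$ with $\rho$ through the Rayleigh quotient, using the Perron vector $\mathbf{x}$ of $H$ as a test vector for $A(H^\star)$. Since $\mathbf{x}$ is a unit vector and $H^\star$ arises from $H$ by deleting $E'\cup E''$ and adding $M$, we get
\[
\rho^\star\ \ge\ \mathbf{x}^\top A(H^\star)\mathbf{x}
=\rho-2\sum_{uv\in E'}x_ux_v-2\sum_{uv\in E''}x_ux_v+2\sum_{uv\in M}x_ux_v .
\]
By Lemma~\ref{Lem4.2}, $\rho>g_{s-1}(h)>\sqrt h$. It therefore suffices to show that the total gain from $M$ strictly exceeds the total loss from $E'\cup E''$.

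The three sums are estimated in terms of $x_{u^*}^2$ as follows.
\begin{enumerate}
\item[(a)] For $uv\in E'$, the trivial bound $x_ux_v\le x_{u^*}^2$ suffices.
\item[(b)] Each edge of $E''$ joins a vertex of $U_1'\subseteq U_1$ to a vertex of $U_2$. Claim~\ref{CLA5.1} bounds the $U_2$-endpoint by $\frac14x_{u^*}$, so $x_ux_v\le \frac14x_{u^*}^2$.
\item[(c)] Each edge of $M$ has both endpoints in $Q\subseteq U_1'\subseteq U_1^{(8)}$. By Claim~\ref{CLA5.2}, $u^*\in U_1^{(8)}$, so Lemma~\ref{Lem4.10C}(i) applied to $u^*$ and a vertex $w\in U_1^{(8)}$ gives $x_w\ge(1-16\varepsilon_2)x_{u^*}$. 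Hence $x_ux_v\ge(1-32\varepsilon_2)x_{u^*}^2$ for $uv\in M$.
\end{enumerate}

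Using $|M|=|E'|+|E''|$, these bounds combine to
\[
\rho^\star-\rho\ \ge\ 2x_{u^*}^2\Bigl(|M|(1-32\varepsilon_2)-|E'|-\tfrac14|E''|\Bigr)
=2x_{u^*}^2\Bigl(\bigl(\tfrac34-32\varepsilon_2\bigr)|E''|-32\varepsilon_2|E'|\Bigr).
\]
Now $|E'|\le\varepsilon_2|U_1|$, while $|E''|=\lfloor |U_1|/4\rfloor\ge |U_1|/5$ because $|U_1|\ge 30s^2$ by Lemma~\ref{LEM4.8B}. The bracket is therefore at least $\bigl(\frac{3}{20}-\frac{32}{5}\varepsilon_2-32\varepsilon_2^2\bigr)|U_1|>0$ for $\varepsilon_2\ll1$. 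Since $x_{u^*}>0$, this gives $\rho^\star>\rho>g_{s-1}(h)>\sqrt h$.

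The step that needs the most care is the comparison of the $E'$ loss against the $M$ gain. Bound (a) is crude, so the argument relies on $|E'|$ being an $\varepsilon_2$-fraction of $|U_1|$, while $|E''|$ is a constant fraction of $|U_1|$ and its edges are genuinely cheap, each costing at most $\frac14x_{u^*}^2$ rather than close to $x_{u^*}^2$. This is exactly why the matching $E''$ was built with one endpoint in $U_2$.

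Two further points should be checked. Deleting edges may leave isolated vertices in $H^\star$, but these affect neither $\rho^\star$ nor the Rayleigh bound. Also, $Q\subseteq U_1^{(8)}$ holds because $U_1'$ was taken inside $U_1^{(8)}$.
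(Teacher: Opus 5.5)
Your proposal is correct and is essentially the paper's own argument. You test $A(H^\star)$ against the Perron vector $\mathbf{x}$ of $H$, bound each $E'$-edge by $x_{u^*}^2$, bound each $E''$-edge by $\tfrac14x_{u^*}^2$ via Claim~\ref{CLA5.1}, bound each $M$-edge from below by $(1-16\varepsilon_2)^2x_{u^*}^2$ via Claim~\ref{CLA5.2} and Lemma~\ref{Lem4.10C}(i), and conclude from $|E''|\ge |U_1|/5$ and $|E'|\le\varepsilon_2|U_1|$; your bracket $\bigl(\tfrac34-32\varepsilon_2\bigr)|E''|-32\varepsilon_2|E'|$ is just a slightly sharper form of the paper's final bound $(-64\varepsilon_2|E'|+|E''|)x_{u^*}^2$.
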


\begin{proof}
By Lemma~\ref{Lem4.10C} (i) and Claim~\ref{CLA5.2}, every vertex $u\in U_1^{(8)}$
satisfies $x_u\geq (1-16\varepsilon_2)x_{u^*}.$
By Claim~\ref{CLA5.1}, every vertex $v\in U_2$ satisfies
$x_v\leq \frac14 x_{u^*}.$
Using $\mathbf{x}$ in the Rayleigh quotient for $H^\star$, we get
$$
\begin{aligned}
\rho^\star-\rho
&\geq
2\sum_{uv\in M}x_ux_v
-2\sum_{uv\in E'}x_ux_v
-2\sum_{uv\in E''}x_ux_v  \\
&\geq
2(|E'|+|E''|)
(1-16\varepsilon_2)^2x_{u^*}^2
-2|E'| x_{u^*}^2
-\frac12|E''| x_{u^*}^2\\
&\geq (-64\varepsilon_2|E'|+|E''|)x_{u^*}^2.
\end{aligned}
$$
Since $|U_1|\geq 30s^2$, we have
$|E''|=\lfloor\frac{|U_1|}4\rfloor\geq \frac{|U_1|}5$ and
$|E'|\leq\varepsilon_2|U_1|.$
Therefore, for sufficiently small \(\varepsilon_2>0\),
the previous inequality implies that
$\rho^\star-\rho>0$,
and hence $\rho^\star>\rho$.
Combining this with Lemma~\ref{Lem4.2} and \eqref{equ-001}, we obtain
$\rho^\star>\rho>g_{s-1}(h)>\sqrt{h}$,
as required.
\end{proof}

\begin{claim}\label{CLA5.4}
Let $i\in \{1,2\}$.
For any two vertices $u_1\in U_i$ and $u_2\in U_i^{(8)}$,
     we have $y_{u_1}-y_{u_2}\leq 16\varepsilon_2 y_{u^\star}$.
Furthermore, for any two vertices $u_1,u_2\in U_i^{(8)}$,
     we have $|y_{u_1}-y_{u_2}|\leq 16\varepsilon_2 y_{u^\star}$.
\end{claim}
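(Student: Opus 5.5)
The plan is to repeat the argument of Lemma~\ref{Lem4.10C}~(i) verbatim for $H^\star$, with $\mathbf{y}$ and $\rho^\star$ in place of $\mathbf{x}$ and $\rho$. The needed structure transfers because $H^\star$ is close to $H$. By \eqref{align-001G} and $d(H,K_{U_1,U_2})\le\varepsilon h$, we have
\[
d(H^\star,K_{U_1,U_2})\le d(H,K_{U_1,U_2})+d(H,H^\star)\le \varepsilon h+\tfrac19\sqrt h\le 2\varepsilon h
\]
for large $h$.

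First we check the degree properties of $H^\star$ that the argument uses. The modification only deletes edges of $E'\cup E''$ and adds a matching $M$ inside $U_1$.
\begin{itemize}
\item For $u_2\in U_i^{(8)}$, at most one edge of $E''$ at $u_2$ is removed, and edges of $E'$ lie inside $U_1$. Hence $|N^\star_{U_{3-i}}(u_2)|\ge(1-8\varepsilon_2)|U_{3-i}|-1\ge(1-9\varepsilon_2)|U_{3-i}|$.
\item For $u_1\in U_i$, the degree into $U_i$ increases by at most $1$ (one matching edge of $M$). So Lemma~\ref{Lem4.9C} gives $|N^\star_{U_i}(u_1)|\le 6\varepsilon_2|U_i|+1\le 7\varepsilon_2|U_i|$. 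This uses $|U_1|\ge 30s^2$ from Lemma~\ref{LEM4.8B}; to absorb the $+1$ for general constants one may need $\varepsilon_2|U_1|\ge1$. If that fails, we instead handle the single extra vertex directly, since its contribution $|U_j|y_{u^\star}$ is $O(\sqrt h)\,y_{u^\star}=o(h)\,y_{u^\star}$.
\end{itemize}
Since $R=\varnothing$ by Lemma~\ref{Lem4.11C}, $U_1\cup U_2$ is still all of $V(H^\star)$.

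Next, for $u_1\in U_i$ and $u_2\in U_i^{(8)}$ with $y_{u_1}\ge y_{u_2}$, apply the eigenvalue equation twice:
\[
\rho^{\star2}(y_{u_1}-y_{u_2})\le\sum_{v\in N^\star(u_1)\setminus N^\star(u_2)}\ \sum_{z\in N^\star(v)}y_z .
\]
We split this sum into the same three parts as in Lemma~\ref{Lem4.10C}.
\begin{itemize}
\item Terms with $v\in U_j\setminus N^\star(u_2)$ and $vz$ a cross edge contribute at most $9\varepsilon_2|U_1||U_2|\,y_{u^\star}$.
\item Terms with $v\in N^\star_{U_i}(u_1)$ and $vz$ a cross edge contribute at most $7\varepsilon_2|U_1||U_2|\,y_{u^\star}$.
\item Terms with $vz\notin E(K_{U_1,U_2})$ contribute at most $2d(H^\star,K_{U_1,U_2})\,y_{u^\star}\le4\varepsilon h\,y_{u^\star}$.
\end{itemize}
Claim~\ref{CLA5.3} gives $\rho^{\star2}>h$, and $|U_1||U_2|\le(1+\varepsilon)h$. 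Together these give $y_{u_1}-y_{u_2}\le(16\varepsilon_2(1+\varepsilon)+4\varepsilon)\,y_{u^\star}$.

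The main obstacle is that this constant is slightly above $16\varepsilon_2$. To land exactly on $16\varepsilon_2$, I would count more carefully. The bound $9\varepsilon_2$ is really $8\varepsilon_2|U_j|+1$, and similarly $7\varepsilon_2$ is $6\varepsilon_2|U_i|+1$. This yields $14\varepsilon_2|U_1||U_2|+O(|U_1|+|U_2|)$, and the $O(\sqrt h+h/|U_1|)$ error is absorbed into $2\varepsilon_2 h$ once $|U_1|\ge 30s^2$. Here the possibly large $h/|U_1|$ term must be checked; if needed, use that only the single matched neighbor is lost, so the loss is $|U_i|\,y_{u^\star}\le h/30s^2\cdot y_{u^\star}$, which may require shrinking constants. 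Finally, the second statement follows by applying the first to $(u_1,u_2)$ and $(u_2,u_1)$.
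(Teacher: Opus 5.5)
Your skeleton is the paper's: the second-order eigenvalue equation, the three-way split, $d(H^\star,K_{U_1,U_2})\le 2\varepsilon h$, and swapping $u_1,u_2$ for the second assertion. However, your treatment of the single exceptional vertex in the first two parts fails. There are two such vertices. One is the vertex that $u_2$ loses through $E''$ when $i=2$, which lies in $U_1$. The other is the vertex that $u_1$ gains through $M$ when $i=1$, which also lies in $U_1$. Both have inner sums running over $z\in U_2$, so your crude bound on their contribution is $|U_2|\,y_{u^\star}$. This is not $O(\sqrt h)\,y_{u^\star}$: in Case~1 you only know $|U_2|\ge 9\sqrt h$, and $|U_2|\approx h/|U_1|$ is of order $h$ whenever $|U_1|$ is a bounded constant, which Lemma~\ref{LEM4.8B} does not exclude.

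You cannot absorb $h/|U_1|$ into $2\varepsilon_2 h$ via $|U_1|\ge 30s^2$ either. That would need $60s^2\varepsilon_2\ge 1$, whereas the proof of Lemma~\ref{LEM4.8B} uses $30s^2\varepsilon_2<1$, and later steps force $\varepsilon_2$ much smaller still; shrinking constants only makes this worse. For the same reason, your inequality $(1-8\varepsilon_2)|U_1|-1\ge(1-9\varepsilon_2)|U_1|$ is unavailable when $\varepsilon_2|U_1|<1$, which is exactly the bounded-$|U_1|$ regime.

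The missing idea is to bound the exceptional vertex $v$ through the eigenvalue equation instead of counting its neighbours: $\sum_{z\in N_{H^\star}(v)}y_z=\rho^\star y_v\le \rho^\star y_{u^\star}\le\sqrt{2h}\,y_{u^\star}$ by Lemma~\ref{lem2.8}. Then keep the unperturbed counts. The first is $8\varepsilon_2|U_j|$, since $N^\star_{U_j}(u)\subseteq N_{U_j}(u)$ with at most one vertex lost. The second is $6\varepsilon_2|U_i|$, since $N^\star_{U_i}(u_1)\subseteq N_{U_i}(u_1)\cup B$ with $|B|\le 1$. This gives $(\rho^\star)^2(y_{u_1}-y_{u_2})\le(14\varepsilon_2|U_1||U_2|+4\varepsilon h+2\rho^\star)\,y_{u^\star}$. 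The bound $16\varepsilon_2$ then follows directly, with no condition on $\varepsilon_2|U_1|$ and with room to spare. This also disposes of your worry that the constant exceeds $16\varepsilon_2$.
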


\begin{proof}
If $y_{u_1} \le y_{u_2}$, then the desired inequality is immediate.
Hence assume that $y_{u_1} > y_{u_2}$.
Applying the eigenvalue equation twice yields
\[
(\rho^\star)^2(y_{u_1}-y_{u_2}) = \sum_{v\in N_{H^\star}(u_1)}\sum_{z\in N_{H^\star}(v)}y_z - \sum_{v\in N_{H^\star}(u_2)}\sum_{z\in N_{H^\star}(v)}y_z.
\]
Observe that the terms for $v \in N_{H^\star}(u_1) \cap N_{H^\star}(u_2)$ cancel out.
By the non-negativity of the eigenvector $\mathbf{y}$,
discarding the negative terms in the difference yields
\[
(\rho^\star)^2(y_{u_1}-y_{u_2}) \le \sum_{v \in N_{H^\star}(u_1) \setminus N_{H^\star}(u_2)} \sum_{z \in N_{H^\star}(v)} y_z.
\]

Let $j=3-i$. Since the only edges between $U_1$ and $U_2$ deleted when passing from $H$ to $H^\star$ are those in the matching $E''$, and no edge between $U_1$ and $U_2$ is added, we have $N^\star_{U_j}(u)\subseteq N_{U_j}(u)$
for every $u\in U_i$. Moreover, since $E''$ is a matching,
$|N_{U_j}(u)\setminus N^\star_{U_j}(u)|\leq 1.$
Therefore,
$$
N^\star_{U_j}(u_1)\setminus N^\star_{U_j}(u_2)
\subseteq
\bigl(N_{U_j}(u_1)\setminus N_{U_j}(u_2)\bigr)
\cup
\bigl(N_{U_j}(u_2)\setminus N^\star_{U_j}(u_2)\bigr).
$$
Since $u_2\in U_i^{(8)}$, we have
$|U_j\setminus N_{U_j}(u_2)|\leq 8\varepsilon_2|U_j|,$
and hence
$|N_{U_j}(u_1)\setminus N_{U_j}(u_2)|\leq 8\varepsilon_2|U_j|.$
Thus, except for at most one vertex, the set
$N^\star_{U_j}(u_1)\setminus N^\star_{U_j}(u_2)$
has size at most $8\varepsilon_2|U_j|$.
The possible exceptional vertex contributes at most $\rho^\star y_{u^\star}$ by the eigenvalue equation.
Consequently,
$$
\begin{aligned}
\sum_{v\in N^\star_{U_j}(u_1)\setminus N^\star_{U_j}(u_2)}
\sum_{z\in N^\star_{U_i}(v)}y_z
\leq
8\varepsilon_2|U_i||U_j|y_{u^\star}
+\rho^\star y_{u^\star}.
\end{aligned}
$$

We next consider
$\sum_{v\in N^\star_{U_i}(u_1)\setminus N^\star_{U_i}(u_2)}\sum_{z\in N^\star_{U_j}(v)}y_z.$
By Lemma $\ref{Lem4.9C}$,
$|N_{U_i}(u_1)|\leq 6\varepsilon_2|U_i|.$
When passing from $H$ to $H^\star$, no edge is added inside $U_2$, while the only edges added inside $U_1$ form the matching $M$. Hence, for either $i=1$ or $i=2$,
$N^\star_{U_i}(u_1)\subseteq N_{U_i}(u_1)\cup B,$
where $|B|\leq1$. Therefore,
$$
\begin{aligned}
\sum_{v\in N^\star_{U_i}(u_1)\setminus N^\star_{U_i}(u_2)}
\sum_{z\in N^\star_{U_j}(v)}y_z
\leq
6\varepsilon_2|U_i||U_j|y_{u^\star}
+\rho^\star y_{u^\star}.
\end{aligned}
$$

Finally, the contribution corresponding to edges outside $K_{U_1,U_2}$ satisfies
$$
d(H^\star,K_{U_1,U_2})
\leq d(H^\star,H)+d(H,K_{U_1,U_2})
\leq2\varepsilon h.
$$
Hence
$$
\sum_{v\in N_{H^\star}(u_1)\setminus N_{H^\star}(u_2)}
\sum_{\substack{z\in N_{H^\star}(v)\\
vz\notin E(K_{U_1,U_2})}}y_z
\leq4\varepsilon h y_{u^\star}.
$$

Combining the above estimates gives
$$
(\rho^\star)^2(y_{u_1}-y_{u_2})
\leq
\left(
14\varepsilon_2|U_1||U_2|
+4\varepsilon h
+2\rho^\star
\right)y_{u^\star}.
$$
Since $e(H^\star)=h$, by Lemma \ref{lem2.8} we have
$\rho^\star\leq\sqrt{2h}.$
Moreover, by Claim $\ref{CLA5.3}$,
$(\rho^\star)^2>h,$
and $|U_1||U_2|\leq(1+\varepsilon)h$. Therefore,
$$
\begin{aligned}
h(y_{u_1}-y_{u_2})
<
(\rho^\star)^2(y_{u_1}-y_{u_2})
\leq
\left(
14\varepsilon_2(1+\varepsilon)
+4\varepsilon
+\frac{2\sqrt{2}}{\sqrt h}
\right)
h y_{u^\star}
< 16\varepsilon_2h y_{u^\star},
\end{aligned}
$$
where the last inequality holds for sufficiently large $h$, since
$\varepsilon\ll\varepsilon_2$.
Thus,
$y_{u_1}-y_{u_2}\leq16\varepsilon_2y_{u^\star}.$

Finally, let $u_1,u_2\in U_i^{(8)}$.
Since $U_i^{(8)}\subseteq U_i$, applying the first assertion
to the ordered pairs $(u_1,u_2)$ and $(u_2,u_1)$, respectively, gives
\[
y_{u_1}-y_{u_2}\leq 16\varepsilon_2 y_{u^\star}
\quad\text{and}\quad
y_{u_2}-y_{u_1}\leq 16\varepsilon_2 y_{u^\star}.
\]
Therefore,
$|y_{u_1}-y_{u_2}|\leq 16\varepsilon_2 y_{u^\star}$,
which proves the second assertion.
\end{proof}

\begin{claim}\label{CLA5.5}
We have $u^\star\in U_1^{(8)}\cup U_2^{(8)}.$
\end{claim}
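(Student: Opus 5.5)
The plan is to mirror the proof of Lemma~\ref{Lem4.12C}, now for $H^\star$ and $\mathbf{y}$. First I will check that $H^\star$ has no vertices outside $U_1\cup U_2$. By Lemma~\ref{Lem4.11C} we have $R=\varnothing$ in $H$, and $H^\star$ lives on the same vertex set. So every vertex of $H^\star$ lies in $U_1^{(8)}\cup U_2^{(8)}\cup S_1^{(8)}\cup S_2^{(8)}$, and it suffices to show $u^\star\notin S_1^{(8)}$ and $u^\star\notin S_2^{(8)}$. The two cases are symmetric, so I treat $u^\star\in S_i^{(8)}$ and set $j=3-i$.

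Next I record the degree facts for $u^\star$ in $H^\star$. By definition of $S_i^{(8)}$ we have $|N_{U_j}(u^\star)|\le(1-8\varepsilon_2)|U_j|$. Lemma~\ref{Lem4.5}(i) gives $|N_{U_j}(u^\star)|\ge\frac12|U_j|$, so Lemma~\ref{Lem4.9C} forces $|N_{U_i}(u^\star)|\le 6\varepsilon_2|U_i|$. Passing to $H^\star$, edges between $U_1$ and $U_2$ are only deleted, so $N^\star_{U_j}(u^\star)\subseteq N_{U_j}(u^\star)$. Inside $U_i$ only the matching $M$ is added, so $N^\star_{U_i}(u^\star)\subseteq N_{U_i}(u^\star)\cup B$ with $|B|\le1$.

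Then I expand $(\rho^\star)^2y_{u^\star}=\sum_{v\in N_{H^\star}(u^\star)}\sum_{z\in N_{H^\star}(v)}y_z$ and bound every $y_z$ by $y_{u^\star}$, splitting the terms as in Lemma~\ref{Lem4.12C}:
\begin{itemize}
\item Pairs with $v\in N^\star_{U_j}(u^\star)$ and $z\in U_i$ contribute at most $(1-8\varepsilon_2)|U_1||U_2|y_{u^\star}$.
\item Pairs with $v\in N_{U_i}(u^\star)$ and $z\in U_j$ contribute at most $6\varepsilon_2|U_1||U_2|y_{u^\star}$.
\item The possible extra vertex $v\in B$ contributes at most $\rho^\star y_{u^\star}\le\sqrt{2h}\,y_{u^\star}$, by the eigenvalue equation and Lemma~\ref{lem2.8}.
\item Pairs $(v,z)$ with $vz\notin E(K_{U_1,U_2})$ contribute at most $2d(H^\star,K_{U_1,U_2})y_{u^\star}\le 4\varepsilon h\,y_{u^\star}$, using $d(H^\star,K_{U_1,U_2})\le d(H^\star,H)+d(H,K_{U_1,U_2})\le 2\varepsilon h$ from \eqref{align-001G}.
\end{itemize}
Summing, cancelling $y_{u^\star}>0$, and using $|U_1||U_2|\le(1+\varepsilon)h$ gives
\[
(\rho^\star)^2\le(1-2\varepsilon_2)(1+\varepsilon)h+4\varepsilon h+\sqrt{2h}<h
\]
for large $h$, since $\varepsilon\ll\varepsilon_2$. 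This contradicts Claim~\ref{CLA5.3}.

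The only delicate step is the bookkeeping of how the modifications $E'$, $E''$, $M$ change neighborhoods. I expect this to cause no real difficulty, because each modification set is either a matching or already absorbed into the edit-distance term.
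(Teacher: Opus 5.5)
Your proof is correct and follows essentially the same route as the paper. You rule out $u^\star\in S_i^{(8)}$ using $R=\varnothing$, transfer the degree bounds from $H$ to $H^\star$, and expand $(\rho^\star)^2y_{u^\star}$ to reach $(\rho^\star)^2<h$, contradicting Claim~\ref{CLA5.3}; the only difference is that the paper needs no term for a possible $M$-neighbour, because $M\subseteq\binom{Q}{2}$ with $Q\subseteq U_1^{(8)}$ means no edge of $M$ touches $u^\star\in S_1^{(8)}$, whereas your extra $\sqrt{2h}$ term handles the same point and is equally harmless.
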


\begin{proof}
By Lemma~\ref{Lem4.11C}, we have $R=\varnothing$, and hence
$u^\star\in U_1\cup U_2$.
Suppose, for contradiction, that
$u^\star\in S_i^{(8)}$ for some $i\in\{1,2\}$, and put $j=3-i$.
By the definition of $S_i^{(8)}$,
$|N_{U_j}(u^\star)|\leq(1-8\varepsilon_2)|U_j|.$
Moreover, since $u^\star\in U_i$, Lemma~\ref{Lem4.9C} yields
$|N_{U_i}(u^\star)|\leq6\varepsilon_2|U_i|.$

We claim that the corresponding upper bounds continue to hold in
$H^\star$.
Since no edge between $U_1$ and $U_2$ is added when passing from
$H$ to $H^\star$, we have
$|N^\star_{U_j}(u^\star)|\leq|N_{U_j}(u^\star)|\leq
(1-8\varepsilon_2)|U_j|.$
It remains to estimate the number of neighbors of $u^\star$ in $U_i$.

If $i=2$, then no edge is added inside $U_2$, and hence
$|N^\star_{U_2}(u^\star)|\leq|N_{U_2}(u^\star)|\leq6\varepsilon_2|U_2|.$
If $i=1$, then $u^\star\in S_1^{(8)}$, whereas every edge of the
added matching $M$ has both endpoints in
$Q\subseteq U_1^{(8)}$.
Thus, no edge of $M$ is incident with $u^\star$, and therefore
$|N^\star_{U_1}(u^\star)|\leq|N_{U_1}(u^\star)|\leq6\varepsilon_2|U_1|.$
Consequently, in either case,
$|N^\star_{U_i}(u^\star)|\leq6\varepsilon_2|U_i|.$

Applying the eigenvalue equation twice at $u^\star$, we obtain
$$
(\rho^\star)^2y_{u^\star}
=
\sum_{v\in N_{H^\star}(u^\star)}
\sum_{z\in N_{H^\star}(v)}y_z.
$$
The contribution arising from edges between $U_1$ and $U_2$ is at most
$\bigl((1-8\varepsilon_2)+6\varepsilon_2\bigr)|U_1||U_2|y_{u^\star}.$
Furthermore,
$$
d(H^\star,K_{U_1,U_2})
\leq
d(H^\star,H)+d(H,K_{U_1,U_2})
\leq
2\varepsilon h.
$$
Hence the contribution arising from edges outside
$K_{U_1,U_2}$ is at most
$4\varepsilon h\,y_{u^\star}.$
It follows that
$$
\begin{aligned}
(\rho^\star)^2
\leq
(1-2\varepsilon_2)|U_1||U_2|
+4\varepsilon h
\leq
\left(
(1-2\varepsilon_2)(1+\varepsilon)
+4\varepsilon
\right)h
&<h,
\end{aligned}
$$
where the last inequality follows from
$\varepsilon\ll\varepsilon_2$.
This contradicts Claim~\ref{CLA5.3}, which gives
$\rho^\star>\sqrt h$.
Therefore,
$u^\star\in U_1^{(8)}\cup U_2^{(8)}.$
\end{proof}

\begin{claim}\label{CLA5.6}
We have $u^\star\in U_1^{(8)}.$
\end{claim}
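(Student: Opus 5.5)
We mimic the argument used for Claims~\ref{CLA5.1} and \ref{CLA5.2} in $H$, now carried out for the eigenvector $\mathbf{y}$ of $H^\star$. By Claim~\ref{CLA5.5}, $u^\star\in U_1^{(8)}\cup U_2^{(8)}$. Suppose, for contradiction, that $u^\star\in U_2^{(8)}$. Then, by Claim~\ref{CLA5.4}, every $u\in U_2^{(8)}$ satisfies $y_u\ge(1-16\varepsilon_2)y_{u^\star}$. We will show instead that all entries of $\mathbf{y}$ on $U_2$ are at most a small constant multiple of $y_{u^\star}$, which is absurd at $u=u^\star$.

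The first step is to transfer Lemma~\ref{Lem4.10C}(ii),(iii) to $\mathbf{y}$. We need a lower bound $y_v^2\ge(1-O(\varepsilon_2))\frac{|U_2|}{2h}$ for $v\in U_1^{(8)}$, and an upper bound $\sum_{u\in U_2^{(8)}}y_u^2\le\frac12+O(\varepsilon_2)$. A direct route uses the eigenvalue equation at $u\in U_2^{(8)}$ in $H^\star$. Such a $u$ has at least $(1-9\varepsilon_2)|U_1|-1$ neighbours in $U_1^{(8)}$, since $E''$ removes at most one edge at $u$. By Claim~\ref{CLA5.4}, all these neighbours have $y$-values within $16\varepsilon_2y_{u^\star}$ of one another.

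Write $a$ for a typical value of $\mathbf{y}$ on $U_1^{(8)}$ and $b$ for a typical value on $U_2^{(8)}$. Up to $O(\varepsilon_2)$ relative errors, and since $u^\star\in U_2^{(8)}$ gives $b\approx y_{u^\star}$, the eigenvalue equations give
\[
\rho^\star b\approx |U_1|a+(\text{contribution of edges inside }U_2).
\]
Edges inside $U_2$ are few and contribute at most $6\varepsilon_2|U_2|y_{u^\star}$ by Lemma~\ref{Lem4.9C}, with no new edges added there. Similarly, for $a$ we get
\[
\rho^\star a\gtrsim (1-9\varepsilon_2)|U_2|b.
\]
Hence $a\gtrsim \frac{|U_2|}{\rho^\star}b\ge\frac{|U_2|}{\sqrt{2h}}\,b$. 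By \eqref{ali-02}, $|U_2|\ge 9\sqrt h$, so $a\ge 6b\approx 6y_{u^\star}$. This contradicts the maximality of $y_{u^\star}$ as soon as $U_1^{(8)}\neq\varnothing$, which holds since $|U_1|\ge 30s^2$ by Lemma~\ref{LEM4.8B}. Concretely, pick $v\in U_1^{(8)}$ not incident with $E''$; this is possible because $E''$ covers only about $|U_1|/4$ vertices of $U_1$. Then
\[
\rho^\star y_v\ge\sum_{u\in N^\star_{U_2^{(8)}}(v)}y_u\ge(1-9\varepsilon_2)|U_2|(1-16\varepsilon_2)y_{u^\star}.
\]
Dividing by $\rho^\star\le\sqrt{2h}$ gives $y_v\ge 5y_{u^\star}>y_{u^\star}$, a contradiction. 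This one-line estimate is the whole argument. It does not even need the norm estimates, only Claim~\ref{CLA5.4}, Lemma~\ref{Lem4.6}, and the bound \eqref{ali-02}.

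The main point needing care is the neighbourhood count in $H^\star$ for the chosen $v$. We must check that $v\in U_1^{(8)}$ has $|N_{U_2}(v)|\ge(1-8\varepsilon_2)|U_2|$ in $H$. We must also check that passing to $H^\star$ removes no $U_1$–$U_2$ edge at $v$: only edges of $E''$ are removed between the parts, and $v$ was chosen unmatched. Intersecting with $U_2^{(8)}$ then loses at most $\varepsilon_2|U_2|$ vertices, by Lemma~\ref{Lem4.6}. Finally, Claim~\ref{CLA5.4} applied with $u_1=u^\star$ and $u_2\in U_2^{(8)}$, namely $|y_{u_1}-y_{u_2}|\le16\varepsilon_2y_{u^\star}$, gives the needed lower bound on each summand. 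Thus $u^\star\notin U_2^{(8)}$, and so $u^\star\in U_1^{(8)}$.
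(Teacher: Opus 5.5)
Your argument is correct and is essentially the paper's proof. Assuming $u^\star\in U_2^{(8)}$, Claim~\ref{CLA5.4} gives $y_u\ge(1-16\varepsilon_2)y_{u^\star}$ on $U_2^{(8)}$; then the eigenvalue equation at a vertex of $U_1^{(8)}$, together with $\rho^\star\le\sqrt{2h}$ and $|U_2|\ge 9\sqrt h$ from \eqref{ali-02}, yields an entry of $\mathbf{y}$ exceeding $y_{u^\star}$. The differences are cosmetic: you pick $v$ unmatched by $E''$, whereas the paper takes any $w\in U_1^{(8)}$ and absorbs the at most one lost neighbour into the bound $(1-10\varepsilon_2)|U_2|$, and your preliminary heuristic about transferring Lemma~\ref{Lem4.10C} to $\mathbf{y}$ is not needed.
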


\begin{proof}
By Claim \ref{CLA5.5},
we have $u^\star \in U_1^{(8)} \cup U_2^{(8)}$.
It remains to rule out the possibility that
$u^\star\in U_2^{(8)}$.
Suppose, for contradiction, that
$u^\star\in U_2^{(8)}.$
Then, for every $u\in U_2^{(8)}$, Claim~\ref{CLA5.4} gives
$y_{u^\star}-y_u\leq16\varepsilon_2y_{u^\star},$
and hence
$y_u\geq(1-16\varepsilon_2)y_{u^\star}.$

Choose an arbitrary vertex $w\in U_1^{(8)}$.
By the definition of $U_1^{(8)}$,
$|N_{U_2}(w)|\geq(1-8\varepsilon_2)|U_2|.$
Recall that $H^\star$ is obtained from $H$ by deleting the edges in
$E'\cup E''$ and adding the matching $M$.
Since
$E'\subseteq E(H[U_1])$ and
$M\subseteq\binom{U_1}{2}$,
neither $E'$ nor $M$ affects the edges between $U_1$ and $U_2$.
Thus, the only edges between $U_1$ and $U_2$ that are deleted when
passing from $H$ to $H^\star$ are those in $E''$.
Since $E''$ is a matching, the vertex $w$ loses at most one neighbor
in $U_2$. Therefore,
$$
|N^\star_{U_2}(w)|
\geq
|N_{U_2}(w)|-1.
$$
Using $|S_2^{(8)}|\leq\varepsilon_2|U_2|$, we obtain
$$
\begin{aligned}
|N^\star_{U_2^{(8)}}(w)|
\geq
|N^\star_{U_2}(w)|-|S_2^{(8)}|
\geq
(1-9\varepsilon_2)|U_2|-1
\geq
(1-10\varepsilon_2)|U_2|,
\end{aligned}
$$
where the last inequality holds for sufficiently large $h$.

Applying the eigenvalue equation at $w$, we obtain
$$
\begin{aligned}
\rho^\star y_w
=
\sum_{v\in N_{H^\star}(w)}y_v
\geq
\sum_{v\in N^\star_{U_2^{(8)}}(w)}y_v
\geq
(1-10\varepsilon_2)|U_2|
(1-16\varepsilon_2)y_{u^\star}.
\end{aligned}
$$

Since $e(H^\star)=h$, by Lemma \ref{lem2.8} we have
$\rho^\star\leq\sqrt{2h}$.
while \eqref{ali-02} gives
$|U_2|\geq9\sqrt h.$
Consequently,
$$
y_w
\geq
\frac{9(1-10\varepsilon_2)(1-16\varepsilon_2)}{\sqrt{2}}y_{u^\star}
>y_{u^\star},
$$
where the last inequality holds since the parameters are sufficiently
small.
This contradicts the choice of $u^\star$ as a vertex maximizing the
entries of $\mathbf{y}$.
Therefore,
$u^\star\notin U_2^{(8)}.$
Since
$u^\star\in U_1^{(8)}\cup U_2^{(8)}$,
we conclude that
$u^\star\in U_1^{(8)}.$
This completes the proof.
\end{proof}

\begin{claim}\label{CLA5.7}
For every $u\in U_2$,  we have
$y_u\le \frac{2}{5}y_{u^\star}$.
\end{claim}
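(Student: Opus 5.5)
We follow the proof of Claim~\ref{CLA5.1}, but with the eigenvector $\mathbf{y}$ of $H^\star$ in place of $\mathbf{x}$. The two ingredients there were an $\ell_2$ bound on $U_2$ that makes the smallest entry on $U_2^{(8)}$ small, and a lower bound on $y_{u^\star}$. The new feature is that we no longer have Lemma~\ref{Lem4.10C}(ii)--(iii) for $\mathbf{y}$. Since $y_{u^\star}$ is already fixed in $U_1^{(8)}$ by Claim~\ref{CLA5.6}, we will use the eigenvalue equation directly instead.

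\emph{Step 1: upper bound on a typical $U_2$-entry.} Let $u_0\in U_2^{(8)}$ minimize $y_u$ over $U_2^{(8)}$. Since $\mathbf{y}$ is a unit vector and $|U_2^{(8)}|\ge(1-\varepsilon_2)|U_2|$, we get
\[
(1-\varepsilon_2)|U_2|\,y_{u_0}^2\le\sum_{u\in U_2^{(8)}}y_u^2\le 1,
\]
so $y_{u_0}^2\le \frac{1}{(1-\varepsilon_2)|U_2|}$. This needs no analogue of Lemma~\ref{Lem4.10C}(iii); the crude bound $1$ suffices.

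\emph{Step 2: lower bound on $y_{u^\star}$.} By Claim~\ref{CLA5.6}, $u^\star\in U_1^{(8)}$. Claim~\ref{CLA5.4} then gives $y_w\ge(1-16\varepsilon_2)y_{u^\star}$ for every $w\in U_1^{(8)}$. Take $v\in U_2^{(8)}$. In $H$ it has at least $(1-8\varepsilon_2)|U_1|$ neighbours in $U_1$, and passing to $H^\star$ it loses at most one of them, through $E''$. Removing $S_1^{(8)}$, the eigenvalue equation at $v$ gives
\[
\rho^\star y_v\ge (1-10\varepsilon_2)|U_1|(1-16\varepsilon_2)y_{u^\star}.
\]
The ``$-1$'' is absorbed because $|U_1|\ge 30s^2$ by Lemma~\ref{LEM4.8B}; I would also check whether $\varepsilon_2|U_1|\ge 1$ is really needed, or simply use $|U_1|$ large. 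This step is only auxiliary. The main estimate comes from the eigenvalue equation at $u^\star$ itself:
\[
\rho^\star y_{u^\star}\ge \sum_{v\in N^\star_{U_2^{(8)}}(u^\star)}y_v\ge (1-10\varepsilon_2)|U_2|\,y_{u_0}.
\]
This lower bound is not the useful direction. Instead, apply Cauchy--Schwarz to the full eigen-equation to get $\rho^\star y_{u^\star}\le \sqrt{d}\,\|\mathbf{y}\|$. I think the cleaner route is a two-step eigenvalue expansion:
\[
(\rho^\star)^2 y_{u^\star}\ge \sum_{v\in N^\star_{U_2}(u^\star)}\ \sum_{w\in N^\star_{U_1^{(8)}}(v)} y_w\ge (1-20\varepsilon_2)|U_1||U_2|(1-16\varepsilon_2)y_{u^\star}.
\]
This is consistent but does not bound $y_{u^\star}$ from below. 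Instead, bound $y_{u^\star}$ using the normalization on $U_1$. The entries on $U_1^{(8)}$ are all within $16\varepsilon_2 y_{u^\star}$ of the maximum, so the mass on $U_1$ is about $|U_1|y_{u^\star}^2$. The mass on $U_2$ is $\sum_{v}y_v^2$ with $y_v\approx \frac{|U_1|}{\rho^\star}y_{u^\star}$ (up to $(1\pm O(\varepsilon_2))$, using the eigen-equation at $v$ and the upper bound $y_w\le y_{u^\star}$). Since $(\rho^\star)^2\le(1+3\varepsilon)h$ by the perturbation bound from Lemma~\ref{Lem4.2} and \eqref{align-001G}, and $|U_1||U_2|\approx h$, the $U_2$-mass is also about $|U_1|y_{u^\star}^2$.

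\emph{Step 3: conclusion.} The argument is direct, with no contradiction needed. For $v\in U_2$, the eigenvalue equation in $H^\star$ gives
\[
\rho^\star y_v\le |N^\star_{U_1}(v)|\,y_{u^\star}+\sum_{z\in N^\star_{U_2}(v)}y_z\le |U_1|y_{u^\star}+\rho^\star\text{-small terms}.
\]
Here we use Lemma~\ref{Lem4.9C} (inside-$U_2$ degree at most $6\varepsilon_2|U_2|$, and no edges were added to $U_2$). To control the inside term, iterate once, or use Claim~\ref{CLA5.4} together with the Step~1 bound. Then $\rho^\star>\sqrt h$ (Claim~\ref{CLA5.3}) and $|U_1|\le\frac19\sqrt h$ from \eqref{ali-02} yield
\[
y_v\le \tfrac19 y_{u^\star}+O(\varepsilon_2)y_{u^\star}\le\tfrac25 y_{u^\star}.
\]
The main obstacle is the inside-$U_2$ contribution $\sum_{z\in N^\star_{U_2}(v)}y_z$. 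Bounding it by $6\varepsilon_2|U_2|y_{u^\star}$ is too weak, because $|U_2|$ is large relative to $\rho^\star$. I would first bound every $y_z$ with $z\in U_2$ by $y_{u_0}+16\varepsilon_2y_{u^\star}$ via Claim~\ref{CLA5.4}. Failing that, I would expand twice, writing $(\rho^\star)^2y_v$ as a double sum and bounding it with $|U_1||U_2|$-type counts as in Claim~\ref{CLA5.4}, which keeps every error term at $O(\varepsilon_2 h)y_{u^\star}$.
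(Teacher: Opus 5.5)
\textbf{There is a genuine gap.} The inside-$U_2$ contribution, which you yourself flag as the main obstacle, is never controlled, and the remedy you propose fails.

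In Step~3, you replace each $y_z$ ($z\in N^\star_{U_2}(v)$) by $y_{u_0}+16\varepsilon_2y_{u^\star}$ via Claim~\ref{CLA5.4} and use $|N^\star_{U_2}(v)|\le6\varepsilon_2|U_2|$. This leaves the term $6\varepsilon_2|U_2|\cdot16\varepsilon_2y_{u^\star}/\rho^\star$, which is of order $\varepsilon_2^2(|U_2|/\sqrt h)\,y_{u^\star}$. In Case~1 you only know $|U_1|\ge30s^2$, so $|U_1|$ may be bounded and $|U_2|$ of order $h$. The term is then unbounded, not $O(\varepsilon_2)y_{u^\star}$.

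The double expansion, as you describe it (``$|U_1||U_2|$-type counts as in Claim~\ref{CLA5.4}''), is circular. The paths $v\to w\in U_1\to z\in U_2$ contribute about $|U_1||U_2|\max_{U_2}y$, which is the same size as $(\rho^\star)^2\max_{U_2}y$. In Claim~\ref{CLA5.4} these main terms cancel only because a difference is taken. The expansion could be rescued by two further steps: collapse the $U_1$-branch via $\sum_{z\in N_{H^\star}(w)}y_z=\rho^\star y_w\le\rho^\star y_{u^\star}$, and count the $U_2$-$U_2$-$U_2$ paths by $2e_{H^\star}(U_2)\le2\varepsilon h$. Neither step is in your proposal.

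Step~2's mass estimate has the same defect. The pointwise approximation $y_v\approx\frac{|U_1|}{\rho^\star}y_{u^\star}$ up to a factor $1\pm O(\varepsilon_2)$ ignores exactly this inside-$U_2$ term, so your lower bound on $y_{u^\star}$ is not established either.

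\textbf{The missing observation} is that the inequality you set aside as ``not the useful direction'' already proves the claim. Combine $\rho^\star y_{u^\star}\ge(1-10\varepsilon_2)|U_2|\,y_{u_0}$ with $\rho^\star\le\sqrt{2h}$ (Lemma~\ref{lem2.8}) and $|U_2|\ge9\sqrt h$ from \eqref{ali-02}. This gives $y_{u_0}\le\frac{\sqrt2}{9(1-10\varepsilon_2)}y_{u^\star}<\frac14y_{u^\star}$. Claim~\ref{CLA5.4} then yields $y_u\le y_{u_0}+16\varepsilon_2y_{u^\star}\le\frac25y_{u^\star}$ for every $u\in U_2$.

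This is the paper's proof, phrased there as a contradiction to $y_{v_0}>\frac14y_{u^\star}$. It needs neither Step~1's $\ell_2$ normalization, nor a lower bound on $y_{u^\star}$, nor any eigenvalue equation at a vertex $v\in U_2$.
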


\begin{proof}
Fix a vertex $u\in U_2$.
Let $v_0\in U_2^{(8)}$ be a vertex such that $y_{v_0}=\min_{v\in U_2^{(8)}}y_v$.
By Claim~\ref{CLA5.4}, we have
$y_u-y_{v_0}\leq 16\varepsilon_2 y_{u^\star}.$
Since $\varepsilon_2$ is sufficiently small,
$\frac14+16\varepsilon_2\leq\frac25.$
Thus, it suffices to prove that
$y_{v_0}\leq \frac14y_{u^\star}.$
We prove this by contradiction. Suppose that
$
y_{v_0}>\frac14 y_{u^\star}.
$
By Claim~\ref{CLA5.6}, we have \(u^\star\in U_1^{(8)}\).
Consequently,
$$
|N^\star_{U_2^{(8)}}(u^\star)|
\geq |N^\star_{U_2}(u^\star)|-|S_2^{(8)}|
\geq \big((1-8\varepsilon_2)|U_2|-1\big)-\varepsilon_2 |U_2|
\geq (1-10\varepsilon_2)|U_2|.
$$
It follows that
\[
\rho^\star y_{u^\star}
\geq (1-10\varepsilon_2)|U_2|y_{v_0}
\geq (1-10\varepsilon_2)|U_2|\frac14 y_{u^\star}.
\]
Therefore,
$\rho^\star\geq \frac14(1-10\varepsilon_2)|U_2|.$
By \eqref{ali-02}, we have
$|U_2|\geq 9\sqrt{h}$,
and hence
$\rho^\star\geq \frac14(1-10\varepsilon_2)|U_2|
\geq 2\sqrt{h}.$
On the other hand,
since $e(H^\star)=h$, by Lemma \ref{lem2.8} we have
$\rho^\star\leq\sqrt{2h},$
a contradiction.
Therefore,
$y_{v_0}\leq \frac14 y_{u^\star},$
and hence
$y_u\leq \frac25 y_{u^\star}.$
\end{proof}

\begin{claim}\label{CLA5.8}
We have
$(\rho^\star)^2-\rho^\star\leq h-\frac1{5}|U_1|.$
\end{claim}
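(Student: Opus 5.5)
We will bound $(\rho^\star)^2$ through the twice-applied eigenvalue equation at the maximizing vertex $u^\star$. By Claim~\ref{CLA5.6}, $u^\star\in U_1^{(8)}$, and we write
\[
(\rho^\star)^2 y_{u^\star}=\sum_{v\in N_{H^\star}(u^\star)}\sum_{z\in N_{H^\star}(v)} y_z .
\]
The right-hand side is split according to where $v$ and $z$ lie. The aim is to show that almost every ordered pair $(v,z)$ contributes at most $y_{u^\star}$. The pairs that reach back into $U_1$ through $U_2$ carry a deficit, because $U_1$ is now almost independent except for the matching $M$. Each vertex of $U_1$ touched by the removed matching $E''$ lost one edge, and this loss is measured against the budget $e(H^\star)=h$.

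\textbf{Step 1: rewrite $\rho^\star y_{u^\star}$ in terms of the edge count.} We use the standard identity
\[
\sum_{v\in N(u^\star)}\sum_{z\in N(v)}y_z=\sum_{v\in N(u^\star)}d(v)y_v+\dots,
\]
or, more directly, we bound the double sum by counting. Pairs with $v\in U_1$ (neighbors of $u^\star$ inside $U_1$) number at most $1+|N_{U_1}(u^\star)|$, and each contributes $\rho^\star y_v\le \rho^\star y_{u^\star}$. Since $u^\star$ gained at most one neighbor from $M$, a term of size about $\rho^\star y_{u^\star}$ appears here; this is the source of the $-\rho^\star$ on the left side of Claim~\ref{CLA5.8}. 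For $v\in U_2$, the vertex $z$ ranges over $N^\star(v)$. We bound $y_z\le y_{u^\star}$ when $z\in U_1$, and $y_z\le \frac25 y_{u^\star}$ when $z\in U_2$ (Claim~\ref{CLA5.7}). Summing $d_{U_1}^\star(v)$ over $v\in N_{U_2}(u^\star)$ gives at most $e^\star(U_1,U_2)$.

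\textbf{Step 2: control the total.} We combine the estimates as
\[
(\rho^\star)^2\le \rho^\star+e^\star(U_1,U_2)+\tfrac25\cdot 2e^\star(U_2)+(\text{terms from }E^\star(U_1)).
\]
Since $R=\varnothing$ (Lemma~\ref{Lem4.11C}), we have $h=e^\star(U_1,U_2)+e^\star(U_1)+e^\star(U_2)$. The edges of $M$ inside $U_1$ are counted with the factor $y_{u^\star}$ but should be charged only through the pairs where $v\in U_1$. The cleaner route is therefore to write
\[
e^\star(U_1,U_2)=h-e^\star(U_1)-e^\star(U_2),
\]
with $e^\star(U_1)\ge |M|\ge|E''|\ge |U_1|/5$. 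The deficit $|U_1|/5$ then comes exactly from the edges of $M$, which were moved inside $U_1$ where they do not feed the $U_1$–$U_2$ walks at $u^\star$, except for the single possible $M$-edge at $u^\star$, which is absorbed into the $\rho^\star$ term. The edges inside $U_2$ contribute with coefficient $\frac45<1$, so they can only help.

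\textbf{Main obstacle.} The delicate point is Step 1 for walks $u^\star\to v\to z$ with $v\in U_1$ and $z$ arbitrary, and for walks through $U_2$ landing on $M$-edges. We must make sure that the $M$-edges are not counted in the $U_1$–$U_2$ part while still being present in $h$, which gives the $|U_1|/5$ saving. We must also ensure that the at most $6\varepsilon_2|U_1|$ original $U_1$-neighbors of $u^\star$ (Lemma~\ref{Lem4.9C}) do not cost more than $\rho^\star$ in total. This is done as follows. Every $U_1$-edge at $u^\star$ other than the possible $M$-edge lies in $E(S_1^{(8)})$, because $E'$ was deleted and $u^\star\in U_1^{(8)}$. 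So $N^\star_{U_1}(u^\star)$ has at most one element, and its contribution is at most $\rho^\star y_{u^\star}$. With this, all remaining walks pass through $U_2$, and the count $\sum_{v\in U_2}d^\star_{U_1}(v)\le e^\star(U_1,U_2)\le h-|M|$ gives the claim.
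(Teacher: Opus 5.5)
Your proposal is correct and follows essentially the same route as the paper: you apply the eigenvalue equation twice at $u^\star\in U_1^{(8)}$ and use $|N^\star_{U_1}(u^\star)|\le 1$ (every $H[U_1]$-edge at $u^\star$ lies in $E'$ and was deleted, and $M$ is a matching). You then bound $y_z\le y_{u^\star}$ on $U_1$ and $y_z\le\frac25 y_{u^\star}$ on $U_2$, and conclude with $e_{H^\star}(U_1,U_2)+\frac45 e_{H^\star}(U_2)\le h-e_{H^\star}(U_1)\le h-|E''|$, where $|E''|=\lfloor |U_1|/4\rfloor\ge |U_1|/5$ by Lemma~\ref{LEM4.8B}. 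In the write-up, drop the weaker count $1+|N_{U_1}(u^\star)|$ from your Step~1 and use only this sharper bound, since the weaker count would cost up to $6\varepsilon_2|U_1|\rho^\star$ and break the estimate.
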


\begin{proof}
By Claim~\ref{CLA5.6}, we have $u^\star\in U_1^{(8)}$.
By the construction of $H^\star$, all edges of $H[U_1]$ incident with
$u^\star$ were deleted, while the added edge set $M$ is a matching.
Consequently,
$|N^\star_{U_1}(u^\star)|\leq 1.$
Applying the eigenvalue equation twice at $u^\star$, we obtain
$$
(\rho^\star)^2y_{u^\star}
=\sum_{v\in N_{H^\star}(u^\star)}
\sum_{z\in N_{H^\star}(v)}y_z.
$$
We split the outer sum according to
$N_{H^\star}(u^\star)=
N^\star_{U_1}(u^\star)\cup N^\star_{U_2}(u^\star)$.

\begin{itemize}
    \item Since $|N^\star_{U_1}(u^\star)|\leq1$, the eigenvalue equation gives
$$
\begin{aligned}
\sum_{v\in N^\star_{U_1}(u^\star)}
\sum_{z\in N_{H^\star}(v)}y_z
=
\rho^\star
\sum_{v\in N^\star_{U_1}(u^\star)}y_v
\leq
\rho^\star y_{u^\star}.
\end{aligned}
$$

\item We next estimate the contribution from
$v\in N^\star_{U_2}(u^\star)$.
First, since $y_z\leq y_{u^\star}$ for every $z\in U_1$, we obtain
$$
\begin{aligned}
\sum_{v\in N^\star_{U_2}(u^\star)}
\sum_{z\in N^\star_{U_1}(v)}y_z
&\leq
e_{H^\star}(U_1,U_2)y_{u^\star}.
\end{aligned}
$$
\item On the other hand, by Claim~\ref{CLA5.7},
$y_z\leq \frac25y_{u^\star}$ for every $z\in U_2.$
Hence
$$
\begin{aligned}
\sum_{v\in N^\star_{U_2}(u^\star)}
\sum_{z\in N^\star_{U_2}(v)}y_z
\leq 2e_{H^\star}(U_2)\cdot\frac25y_{u^\star}
=\frac45e_{H^\star}(U_2)y_{u^\star},
\end{aligned}
$$
where the factor $2$ accounts for the two possible orientations of each edge of $H^\star[U_2]$.
\end{itemize}

Combining the above estimates, we obtain
$$
(\rho^\star)^2y_{u^\star}
\leq
\left(
e_{H^\star}(U_1,U_2)
+\frac45e_{H^\star}(U_2)
\right)y_{u^\star}
+\rho^\star y_{u^\star}.
$$
Since
$h=e_{H^\star}(U_1)+e_{H^\star}(U_1,U_2)+e_{H^\star}(U_2),$
it follows that
$$
\begin{aligned}
(\rho^\star)^2-\rho^\star
\leq
e_{H^\star}(U_1,U_2)
+\frac45e_{H^\star}(U_2)
=
h-e_{H^\star}(U_1)-\frac15e_{H^\star}(U_2)
\leq
h-e_{H^\star}(U_1).
\end{aligned}
$$

Finally, by the construction of $H^\star$, the matching $M$ is contained
in $E(H^\star[U_1])$. Therefore,
$$
e_{H^\star}(U_1)
\geq |M|
=|E'|+|E''|
\geq |E''|.
$$
By Lemma~\ref{LEM4.8B},
$|U_1|\geq30s^2$, and hence
$
|E''|
=\lfloor\frac{|U_1|}{4}\rfloor\geq\frac15|U_1|.$
Consequently,
$
(\rho^\star)^2-\rho^\star
\leq
h-\frac15|U_1|,
$
as desired.
\end{proof}

By Claim \ref{CLA5.3}, we have
 $\rho^\star \geq g_{s-1}(h)=\frac{s-2 + \sqrt{4h - (s-1)^2 + 1}}{2}$.
Rearranging the inequality yields
$$(\rho^\star)^2 - (s - 2)\rho^\star \geq h - \frac{(s - 1)(s - 2)}{2}.$$
Combining this with Claim \ref{CLA5.8} yields
$(s-3)\rho^\star\leq \frac{(s - 1)(s - 2)}{2}-\frac{1}{5} |U_1|$.
Since \(\rho^\star\geq \sqrt h\), if \(s\ge4\), then the left-hand
side is at least \((s-3)\sqrt h\), whereas the right-hand side is bounded
above by a constant depending only on \(s\), which is impossible for
sufficiently large \(h\). Hence we must have \(s=3\). In this case, the
above inequality gives
$|U_1|\leq 5$,
contradicting $|U_1|\geq 30s^2$.
Thus, Case 1 is impossible.

\medskip

\noindent{{\bf{Case 2.}}} $|U_2|\leq 100|U_1|$.

\medskip

Since  $|U_1|\leq |U_2|\leq 100|U_1|$ and $|U_1||U_2|\leq (1+\varepsilon)h$,
it follows that $|U_1|\leq \sqrt{(1+\varepsilon)h}$ and $|U_2|\leq 100\sqrt{(1+\varepsilon)h}$.
Thus, for every $v\in V(H)$, we have
\begin{align}\label{Align-001}
d_H(v)\leq |U_1|+|U_2|\leq 101\sqrt{(1+\varepsilon)h}< \varepsilon_0 h.
\end{align}

\begin{claim}\label{CLA5.9}
Let $i\in\{1,2\}$ and put $j=3-i$.
Then the following assertions hold:

{\rm (i)} For every $u\in U_i^{(8)}$,
$x_u\geq (1-16\varepsilon_2) \sqrt{\frac{|U_j|}{2h}}.$

{\rm (ii)} For every $u\in U_i$, $x_u\leq (1+200\varepsilon_2) \sqrt{\frac{|U_j|}{2h}}.$ \end{claim}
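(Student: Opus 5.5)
For part (i) I would simply take a square root in Lemma~\ref{Lem4.10C}(ii). For $u\in U_i^{(8)}$ that lemma gives $x_u^2\geq (1-30\varepsilon_2)\frac{|U_j|}{2h}$. Since $\sqrt{1-30\varepsilon_2}\geq 1-16\varepsilon_2$ for small $\varepsilon_2$ (the right-hand side squared equals $1-32\varepsilon_2+256\varepsilon_2^2$, which is at most $1-30\varepsilon_2$), assertion (i) follows at once. The Case~2 hypothesis is not needed here.

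For part (ii) I would compare $x_u$ with the minimum entry on $U_i^{(8)}$ and control the error term through $x_{u^*}$. Let $u_0\in U_i^{(8)}$ minimize $x$ over $U_i^{(8)}$. Lemma~\ref{Lem4.10C}(i), applied to $u_1=u\in U_i$ and $u_2=u_0$, gives
\[
x_u\le x_{u_0}+16\varepsilon_2x_{u^*}.
\]
To bound $x_{u_0}$ I would argue exactly as in the proof of Lemma~\ref{Lem4.12C}. Lemma~\ref{Lem4.10C}(iii) gives $(1-\varepsilon_2)|U_i|x_{u_0}^2\le \frac12+20\varepsilon_2$. Combining this with $|U_i||U_j|\ge(1-\varepsilon)h$ yields
\[
x_{u_0}^2\le \frac{1+40\varepsilon_2}{(1-\varepsilon_2)(1-\varepsilon)}\cdot\frac{|U_j|}{2h}\le (1+42\varepsilon_2)\frac{|U_j|}{2h},
\]
so $x_{u_0}\le(1+21\varepsilon_2)\sqrt{|U_j|/(2h)}$.

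The only real point is bounding $x_{u^*}$ in terms of $\sqrt{|U_j|/(2h)}$ rather than $\sqrt{|U_{3-k}|/(2h)}$, where $u^*$ may lie in the other part. By Lemma~\ref{Lem4.12C}, $u^*\in U_k^{(8)}$ for some $k$, and $x_{u^*}^2\le(1+80\varepsilon_2)\frac{|U_{3-k}|}{2h}$. In Case~2 we have $|U_1|\le|U_2|\le 100|U_1|$, so $|U_{3-k}|\le 100|U_j|$ whatever $k$ and $j$ are. Hence
\[
x_{u^*}\le 10\sqrt{1+80\varepsilon_2}\,\sqrt{\frac{|U_j|}{2h}}\le 11\sqrt{\frac{|U_j|}{2h}}.
\]
Substituting both bounds gives
\[
x_u\le(1+21\varepsilon_2+176\varepsilon_2)\sqrt{\frac{|U_j|}{2h}}\le(1+200\varepsilon_2)\sqrt{\frac{|U_j|}{2h}}.
\]

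I expect this last step to be the only potential obstacle: the constant $200$ has little slack. If the bookkeeping came out slightly too large, I would instead use $x_{u^*}\le 10(1+40\varepsilon_2)\sqrt{|U_j|/(2h)}$, which costs only an $O(\varepsilon_2^2)$ term.
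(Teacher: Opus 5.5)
Your proposal is correct and follows the paper's proof essentially step for step. Part (i) comes from taking square roots in Lemma~\ref{Lem4.10C}(ii). Part (ii) uses the minimizer $u_0$ on $U_i^{(8)}$, the bound $x_{u_0}\le(1+21\varepsilon_2)\sqrt{|U_j|/(2h)}$ from Lemma~\ref{Lem4.10C}(iii) together with $|U_i^{(8)}|\ge(1-\varepsilon_2)|U_i|$ (Lemma~\ref{Lem4.6}), and the bound $x_{u^*}\le 11\sqrt{|U_j|/(2h)}$ from Lemma~\ref{Lem4.12C} and the Case~2 ratio, giving $21+176\le 200$.
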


\begin{proof}
By Lemma~\ref{Lem4.10C} (ii), for every $u\in U_i^{(8)}$,
$x_u^2\geq(1-30\varepsilon_2)\frac{|U_j|}{2h}.$
Since $\varepsilon_2$ is sufficiently small,
$\sqrt{1-30\varepsilon_2}\geq 1-16\varepsilon_2,$
and hence
$x_u\geq(1-16\varepsilon_2)\sqrt{\frac{|U_j|}{2h}}.$
This proves {\rm (i)}.

We now prove {\rm (ii)}.
Choose $u_0\in U_i^{(8)}$ such that
$x_{u_0}=\min_{v\in U_i^{(8)}}x_v.$
By Lemma~\ref{Lem4.10C} (iii) and
$|U_i^{(8)}|\geq(1-\varepsilon_2)|U_i|$, we have
$(1-\varepsilon_2)|U_i|x_{u_0}^2\leq\sum_{v\in U_i^{(8)}}x_v^2
\leq\frac12+20\varepsilon_2.$
Therefore,
$x_{u_0}^2\leq\frac{1+40\varepsilon_2}{2(1-\varepsilon_2)|U_i|}$.
Since
$|U_i||U_j|\geq(1-\varepsilon)h,$
we obtain
$x_{u_0}\leq(1+21\varepsilon_2)\sqrt{\frac{|U_j|}{2h}}.$

By Lemma~\ref{Lem4.12C}, we have
$u^*\in U_1^{(8)}\cup U_2^{(8)}$.
If $u^*\in U_i^{(8)}$, then
$x_{u^*}^2\leq(1+80\varepsilon_2)\frac{|U_j|}{2h}.$
If $u^*\in U_j^{(8)}$, then
$x_{u^*}^2\leq(1+80\varepsilon_2)\frac{|U_i|}{2h}.$
Since we are in Case 2,
$\frac1{100}\leq\frac{|U_i|}{|U_j|}\leq100.$
Thus, in either case,
$x_{u^*}\leq11\sqrt{\frac{|U_j|}{2h}}$
for sufficiently small $\varepsilon_2$.

Finally, for every $u\in U_i$, Lemma~\ref{Lem4.10C} (i) gives
$x_u-x_{u_0}\leq16\varepsilon_2x_{u^*}.$
Consequently,
\begin{align*}
x_u
\leq
x_{u_0}+16\varepsilon_2x_{u^*}
\leq
\left(1+21\varepsilon_2+176\varepsilon_2\right)
\sqrt{\frac{|U_j|}{2h}}
\leq
(1+200\varepsilon_2)
\sqrt{\frac{|U_j|}{2h}}.
\end{align*}
This proves {\rm (ii)}.
\end{proof}

By Lemma \ref{Lem4.12C}, $u^*\in U_k^{(8)}$ for some $k\in \{1,2\}$.
Furthermore, by  Claim \ref{CLA5.9},
we have
\begin{align}\label{align-005G}
x_{u^*}^2\leq (1+400\varepsilon_2)^2\frac{|U_{2}|}{2h}.
\end{align}

\begin{claim}\label{CLA5.10}
For each $i\in \{1,2\}$, define $$S_i^\bullet=\{u\in U_i:|N_{U_{3-i}}(u)|\leq 0.7|U_{3-i}|\}.$$
Then we have \(S_1^\bullet\cup S_2^\bullet=\varnothing\).
\end{claim}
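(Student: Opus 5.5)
We argue by contradiction, adapting the argument of Lemma~\ref{Lem4.11C} from vertices of $R$ to vertices of $S_i^\bullet$. Suppose some $u\in S_i^\bullet$ exists for some $i\in\{1,2\}$, and put $j=3-i$. Then $|N_{U_j}(u)|\le 0.7|U_j|$. Since $u\in U_i$, Lemma~\ref{Lem4.9C} gives $|N_{U_i}(u)|\le 6\varepsilon_2|U_i|$, and Lemma~\ref{Lem4.11C} gives $R=\varnothing$, so $N_H(u)=N_{U_i}(u)\cup N_{U_j}(u)$.

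\textbf{Lower bound.} By \eqref{Align-001} we have $d_H(u)<\varepsilon_0 h$, so Lemma~\ref{Lem4.4} yields $2hx_u^2\ge(1-2\varepsilon_0)d_H(u)$. The eigenvalue equation at $u$, the Cauchy--Schwarz inequality and $\rho^2>h$ (Lemma~\ref{Lem4.2}) give
\[
hx_u^2<\rho^2x_u^2=\Bigl(\sum_{v\in N_H(u)}x_v\Bigr)^2\le d_H(u)\sum_{v\in N_H(u)}x_v^2 .
\]
Combining the two inequalities, exactly as in \eqref{align-R1}, gives
\[
\sum_{v\in N_H(u)}x_v^2\ge \frac{1-2\varepsilon_0}{2}.
\]

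\textbf{Upper bound.} We bound the two pieces of $N_H(u)$ separately.
\begin{enumerate}
\item[(a)] The part in $U_i$: the computation leading to \eqref{align-R2} uses only $|N_{U_i}(u)|\le 6\varepsilon_2|U_i|$, Lemma~\ref{Lem4.10C}(ii)--(iii) and $|U_i^{(8)}|\ge(1-\varepsilon_2)|U_i|$. It applies verbatim and gives $\sum_{v\in N_{U_i}(u)}x_v^2\le 40\varepsilon_2$.
\item[(b)] The part in $U_j$: since $|N_{U_j}(u)|\le 0.7|U_j|$, at least $(0.3-\varepsilon_2)|U_j|$ vertices of $U_j^{(8)}$ are non-neighbours of $u$. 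By Lemma~\ref{Lem4.10C}(ii) each of them has $x_v^2\ge(1-30\varepsilon_2)|U_i|/(2h)$. Using $|U_1||U_2|\ge(1-\varepsilon)h$, their squared entries sum to at least $\frac{(0.3-\varepsilon_2)(1-30\varepsilon_2)(1-\varepsilon)}{2}\ge 0.15-10\varepsilon_2$. Subtracting this from the bound $\sum_{v\in U_j}x_v^2\le\frac12+20\varepsilon_2$ of Lemma~\ref{Lem4.10C}(iii) gives $\sum_{v\in N_{U_j}(u)}x_v^2\le 0.35+30\varepsilon_2$.
\end{enumerate}

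\textbf{Conclusion.} Adding (a) and (b),
\[
\sum_{v\in N_H(u)}x_v^2\le 0.35+70\varepsilon_2<\frac{1-2\varepsilon_0}{2},
\]
by the hierarchy \eqref{align-7G}. This contradicts the lower bound, so $S_i^\bullet=\varnothing$ for both $i$.

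The only delicate point is the constant bookkeeping in (b): the slack between $0.35$ and $\frac12$ must absorb all the $O(\varepsilon_2)$ losses, and with $\varepsilon_0\ll\varepsilon_2\ll1$ it comfortably does. The essential input from Case~2 is \eqref{Align-001}, which ensures that Lemma~\ref{Lem4.4} applies to every vertex, including those of $U_1$, whose degree may otherwise be large.
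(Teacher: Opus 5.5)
Your proof is correct, and it takes a genuinely different and shorter route than the paper's.

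\textbf{The paper's route.} The paper bounds $x_w$ pointwise. It applies the eigenvalue equation twice at $w$ and uses the Case~2 entry bounds of Claim~\ref{CLA5.9}: $x_v\le(1+\varepsilon_3)r\,x_{u^*}$ on $U_2$, together with \eqref{align-005G} for $x_{u^*}$. This gives $\rho^2x_w\le\bigl(a(1+\varepsilon_3)r+b+2\varepsilon\bigr)(1+\varepsilon)h\,x_{u^*}$. Squaring produces an expression quadratic in the degree fractions $a,b$. The paper compares it with the lower bound $(1-2\varepsilon_0)(ar^2+b)|U_2|$ from Lemma~\ref{Lem4.4}, which is linear in $a,b$, and the threshold $0.7$ yields the gap factor $\frac34$. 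When $a$ and $b$ are both tiny, the additive error terms swamp this comparison. So the paper treats $a,b<1/1000$ separately, via the edge-product bound of Lemma~\ref{Lem4.3}. It also uses the balance $|U_2|\le100|U_1|$ directly, through $r^2\ge1/100$ and the bound on $x_{u^*}$.

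\textbf{Your route.} You reuse the $\ell^2$-mass mechanism of Lemma~\ref{Lem4.11C}. Lemma~\ref{Lem4.4} plus Cauchy--Schwarz forces $N_H(u)$ to carry mass at least $\frac12-\varepsilon_0$. Lemma~\ref{Lem4.10C}(ii),(iii), together with $|N_{U_i}(u)|\le6\varepsilon_2|U_i|$ and $|N_{U_j}(u)|\le0.7|U_j|$, cap that mass at about $0.35$. Your constant bookkeeping in (a) and (b) checks out.

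\textbf{What your route buys.} Because the Cauchy--Schwarz lower bound does not depend on $d_H(u)$, the small-degree case needs no separate treatment. You avoid Claim~\ref{CLA5.9}, the extra constant $\varepsilon_3$, Lemma~\ref{Lem4.3}, and the case split on which of $a,b$ is small. The only Case~2 input is \eqref{Align-001}, which makes Lemma~\ref{Lem4.4} applicable to vertices of $U_1$, exactly as you observe. Both arguments need the threshold $0.7$ to stay bounded away from $1$ by much more than $\varepsilon_2$.
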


\begin{proof}
Suppose to the contrary that \(S_1^\bullet\cup S_2^\bullet\neq \varnothing\).
Take a vertex $w\in S_1^\bullet\cup S_2^\bullet$.
Put $a:=|N_{U_1}(w)|/|U_1|$ and $b:=|N_{U_2}(w)|/|U_2|$.
By Lemma \ref{Lem4.9C}, we have $a\leq 6\varepsilon_2$ or $b\leq 6\varepsilon_2$.

For every \(v\in U_2\), Claim~\ref{CLA5.9} implies that
$x_v\le (1+200\varepsilon_2)
\sqrt{\frac{|U_1|}{2h}}.$
For every \(u\in U_1^{(8)}\), Claim~\ref{CLA5.9} implies that
$x_u\geq (1-16\varepsilon_2)
\sqrt{\frac{|U_2|}{2h}}.$
By the definition of $u^*$,
we have $x_{u^*}\geq x_u\ge (1-16\varepsilon_2)
\sqrt{\frac{|U_2|}{2h}}$.
Consequently,
\[
\frac{x_v}{x_{u^*}}
\leq \frac{1+200\varepsilon_2}{1-16\varepsilon_2}
r
\le
(1+\varepsilon_3)r,
\]
where $r=\sqrt{{|U_1|}/{|U_2|}}\le 1$
and the last inequality follows from
\(\varepsilon_2\ll\varepsilon_3\).
Applying the eigenvalue equation to $\mathbf{x}$ at $w$ twice, we obtain
\[
\rho^2 x_{w} = \sum_{v \in N_{H}(w)} \sum_{z \in N_{H}(v)} x_z.
\]

We now estimate the contribution of the pairs $(v, z)$ in this double sum.
First, we consider the contribution from the edges between $U_1$ and $U_2$:
\[
\sum_{v \in N_{U_2}(w)} \sum_{z \in N_{U_1}(v)} x_z
+\sum_{v \in N_{U_1}(w)} \sum_{z \in N_{U_2}(v)} x_z
\leq \Big(b+a(1+\varepsilon_3)r\Big)|U_2||U_1|x_{u^*}.
\]

It remains to estimate the contribution from the edges that do not belong to the
complete bipartite graph $K_{U_1, U_2}$.
Such edges belong to $E(H) \triangle E(K_{U_1, U_2})$, which yields
\[
\sum_{v \in N_{H}(w)} \sum_{\substack{z \in N_{H}(v) \\ vz \notin E(K_{U_1, U_2})}} x_z
\leq 2 d(H, K_{U_1, U_2}) x_{u^*}
\leq 2\varepsilon h x_{u^*},
\]
where the factor $2$ accounts for the two possible orientations of each edge.
Hence
\begin{align}\label{align-18T}
        \rho^2 x_w
        \leq \Big(a(1+\varepsilon_3)r+b+2\varepsilon\Big)
             (1+\varepsilon)hx_{u^*}.
\end{align}

If $a<1/1000$ and $b<1/1000$, then since
$r:=\sqrt{|U_1|/|U_2|}\leq1$, $\rho^2\geq h$, and the
parameters are sufficiently small, \eqref{align-18T} yields
$x_w\leq
(1+\varepsilon)
\bigl(a(1+\varepsilon_3)r+b+2\varepsilon\bigr)x_{u^*}
\leq \frac3{1000}x_{u^*}$.
Since $H$ contains no isolated vertices,
we can select a neighbor $u\in N_H(w)$.
Since $|U_2|\leq 100|U_1|$, we have
$|U_2|^2
\leq 100|U_1||U_2|
\leq 100(1+\varepsilon)h$,
and hence
$|U_2|\leq 11\sqrt{h}$,
where the last inequality holds for sufficiently small $\varepsilon$.
Combining these with $x_u\leq x_{u^*}$ and \eqref{align-005G} yields
$$x_wx_u\leq \frac{3}{1000}x_{u^*}^2\leq \frac{3}{1000}(1+400\varepsilon_2)^2\frac{|U_{2}|}{2h}\leq \frac{1}{10\sqrt{h}},$$
which contradicts Lemma \ref{Lem4.3}.

It remains to consider the case where $a \ge \frac{1}{1000}$ or $b \ge \frac{1}{1000}$.
By \eqref{Align-001}, we have $d_H(w)<\varepsilon_0 h$.
Then Lemma~\ref{Lem4.4} yields
\begin{equation}\label{eq:R-lower}
2h x_w^2 \ge (1-2\varepsilon_0)d_H(w) \geq (1-2\varepsilon_0)(a|U_1| + b|U_2|)
=(1-2\varepsilon_0)(ar^2+b)|U_2|.
\end{equation}

In light of \eqref{align-18T},
$x_w\leq (1+\varepsilon)(a(1+\varepsilon_3)r+b+2\varepsilon)x_{u^*}.$
These, together with
\(\rho^2\geq h\) and \eqref{align-005G},  give that
\[
\begin{aligned}
2h x_w^2
&\leq 2h\cdot(1+\varepsilon)^2\Big(a(1+\varepsilon_3)
r+b+2\varepsilon\Big)^2x_{u^*}^2\\
&\leq\left(a(1+\varepsilon_3)r+b+2\varepsilon\right)^2(1+401\varepsilon_2)^2|U_{2}|.
\end{aligned}
\]

By Lemma~\ref{Lem4.9C}, either
$a\le 6\varepsilon_2$ or $b\le 6\varepsilon_2$.
We distinguish the following two cases:

First suppose that
$a\leq 6\varepsilon_2$.
Then \(w\notin S_2^\bullet\), since otherwise
Lemma~\ref{Lem4.5} (i) would imply that
$a\geq \frac12$, a contradiction.
Therefore, as
\(w\in S_1^\bullet\cup S_2^\bullet\),
we must have
\(w\in S_1^\bullet\).
Then by the definition of
$S_1^\bullet$, we have
$b\leq 0.7$.
Moreover, since we are in the case
\(a\ge 1/1000\) or \(b\ge 1/1000\),
and \(a\le 6\varepsilon_2<1/1000\),
we have \(b\ge1/1000\).
Hence
\[
\begin{aligned}
\left(a(1+\varepsilon_3)r+b+2\varepsilon\right)^2
(1+401\varepsilon_2)^2
&\leq
\left(b+6\varepsilon_2(1+\varepsilon_3)+2\varepsilon\right)^2
(1+401\varepsilon_2)^2\\
&\leq
\left(b+9\varepsilon_2\right)^2
(1+401\varepsilon_2)^2 .
\end{aligned}
\]
Since \(1/1000\leq b\leq0.7\) and \(\varepsilon_2\) is sufficiently small,
a direct calculation gives
\[
(b+9\varepsilon_2)^2(1+401\varepsilon_2)^2
\leq b^2+820\varepsilon_2
\leq \frac34 b
\leq \frac34(ar^2+b).
\]

Now suppose that
$b\le 6\varepsilon_2$.
Then \(w\notin S_1^\bullet\), since otherwise
Lemma~\ref{Lem4.5} (i) would imply that
$b\geq \frac12$, a contradiction.
Therefore, as
\(w\in S_1^\bullet\cup S_2^\bullet\),
we must have
\(w\in S_2^\bullet\).
Then by the definition of
$S_2^\bullet$, we have
$a\leq 0.7$.
Moreover, since we are in the case
\(a\ge 1/1000\) or \(b\ge 1/1000\),
and \(b\le 6\varepsilon_2<1/1000\),
we have \(a\ge1/1000\).
Consequently,
\[
\begin{aligned}
\left(a(1+\varepsilon_3)r+b+2\varepsilon\right)^2
(1+401\varepsilon_2)^2
&\le
\left(a(1+\varepsilon_3)r+8\varepsilon_2\right)^2
(1+401\varepsilon_2)^2 \\
&\leq
(1+3\varepsilon_3)a^2r^2+820\varepsilon_2.
\end{aligned}
\]
Since $|U_2|\leq100|U_1|$, we have $r^2\geq1/100$.
Hence, as $a\geq1/1000$,
$ar^2\geq\frac1{100000}.$
Moreover, since $a\leq 0.7$,
$a^2r^2\leq 0.7ar^2$.
For sufficiently small $\varepsilon_3$,
$(1+3\varepsilon_3)a^2r^2\leq 0.72ar^2.$
Since $\varepsilon_2$ is sufficiently small, we also have
$820\varepsilon_2\leq 0.03ar^2.$
Now we get
$
(1+3\varepsilon_3)a^2r^2+820\varepsilon_2
\leq\frac34ar^2.
$
Therefore,
\[
\left(a(1+\varepsilon_3)r+b+2\varepsilon\right)^2
(1+401\varepsilon_2)^2
\le
\frac34 ar^2
\le
\frac34(ar^2+b).
\]

In both cases, we obtain
$(a(1+\varepsilon_3)r+b+2\varepsilon)^2
(1+401\varepsilon_2)^2
\le \frac34(ar^2+b)$,
which implies $2h x_w^2<\frac34(ar^2+b)|U_2|$.
However, this contradicts \eqref{eq:R-lower}.
Therefore, $S_1^\bullet\cup S_2^\bullet=\varnothing$.
\end{proof}

\begin{claim}\label{CLA5.11}
For each $j\in \{1,2\}$,
we have $|E^j|\leq \varepsilon_2 |U_j|$,
where $E^j:=E(U_j)$.
\end{claim}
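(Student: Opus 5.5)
Suppose, for contradiction, that $|E^j|>\varepsilon_2|U_j|$ for some $j\in\{1,2\}$. The plan is to show that every edge inside $U_j$ is the added edge of many copies of $K_{s,t}^+$, so that Lemma~\ref{Lem4.7B} together with disjointness forces too many copies. This mirrors the bound $|E'|\le\varepsilon_2|U_1|$ from Case~1, but Claim~\ref{CLA5.10} now lets us handle \emph{all} edges of $U_j$, not only those with an endpoint in $U_j^{(8)}$.

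\textbf{Step 1 (common neighbourhoods).} Take an edge $uv\in E(U_j)$ and let $i=3-j$. By Claim~\ref{CLA5.10}, $S_j^\bullet=\varnothing$, so $|N_{U_i}(u)|,|N_{U_i}(v)|>0.7|U_i|$. Since $|S_i^{(8)}|\le\varepsilon_2|U_i|$ by Lemma~\ref{Lem4.6}, we get
\[
|N_H(u)\cap N_H(v)\cap U_i^{(8)}|\ge (0.4-\varepsilon_2)|U_i|>\varepsilon_2|U_i|.
\]

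\textbf{Step 2 (copies per edge).} If $j=1$, so $i=2$, this is exactly the hypothesis of Lemma~\ref{Lem4.7B} with $j=8$. It gives $|\mathcal F(uv)|\ge c_{s,t,\varepsilon_2}|U_2|h^{\frac{s+t-3}{2}}$.

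If $j=2$, the roles of $U_1$ and $U_2$ are swapped, and this is the main point to check. I would rerun the proof of Lemma~\ref{Lem4.7B} with the roles exchanged: choose the $t$-set $B$ inside the common neighbourhood in $U_1^{(8)}$, and the remaining $s-2$ vertices from $\bigcap_{y\in B}N_{U_2}(y)$, which has size at least $(1-8t\varepsilon_2)|U_2|$. This yields at least a constant times $|U_1|^t|U_2|^{s-2}$ copies. In Case~2 we have $|U_2|\le100|U_1|$, so $|U_1|=\Theta(\sqrt h)=\Theta(|U_2|)$, and the count becomes $c'\,|U_1|\,h^{\frac{s+t-3}{2}}$ for some constant $c'>0$. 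The multiplicity bound $\binom{s+t}{t}$ carries over verbatim. In both cases,
\[
|\mathcal F(uv)|\ge c'\,|U_i|\,h^{\frac{s+t-3}{2}}.
\]

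\textbf{Step 3 (summing).} The families $\mathcal F(e)$, $e\in E^j$, are pairwise disjoint, because the added edge of a copy of $K_{s,t}^+$ is the unique edge whose deletion leaves $K_{s,t}$, as shown in Lemma~\ref{Lem4.9C}. Summing over $e\in E^j$ gives
\[
N(K_{s,t}^+,H)>\varepsilon_2|U_j|\,c'\,|U_i|\,h^{\frac{s+t-3}{2}}\ge \varepsilon_2c'(1-\varepsilon)h^{\frac{s+t-1}{2}}.
\]
Since $\varepsilon_1\ll\varepsilon_2$, this exceeds $2^{\frac{s+t-1}{2}}\varepsilon_1h^{\frac{s+t-1}{2}}$, contradicting \eqref{align-002T}.

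The only real obstacle is the swapped-role version of Lemma~\ref{Lem4.7B} for $j=2$. There the inequalities $|U_1|\le|U_2|$ and $s\le t+1$ run in the unhelpful direction, and the Case~2 assumption $|U_2|\le100|U_1|$ (giving $|U_1|,|U_2|=\Theta(\sqrt h)$) is exactly what fixes the exponent.
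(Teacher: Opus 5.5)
Your proposal is correct and follows essentially the same route as the paper: $S_j^\bullet=\varnothing$ from Claim~\ref{CLA5.10} gives a linear-size common neighbourhood in $U_{3-j}^{(8)}$, the Lemma~\ref{Lem4.7B}-type count is balanced via the Case~2 ratio $|U_2|\le 100|U_1|$, and disjointness of the families $\mathcal{F}(e)$ finishes the argument. The only cosmetic difference is that the paper reruns the counting uniformly for both $j$ using $|U_{3-j}|/|U_j|\ge 1/100$, whereas you cite Lemma~\ref{Lem4.7B} directly for $j=1$ and redo the count only for $j=2$, which is equally valid.
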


\begin{proof}
Fix $j\in\{1,2\}$.
Suppose to the contrary that
$|E^j|>\varepsilon_2|U_j|$.
Choose an arbitrary edge $e=uv\in E^j$.
By Claim \ref{CLA5.10}, we have $S_j^\bullet=\varnothing$.
Hence
$|N_{U_{3-j}}(u)|>0.7|U_{3-j}|$
and
$|N_{U_{3-j}}(v)|>0.7|U_{3-j}|$.
It follows that
$|N_{U_{3-j}}(u)\cap N_{U_{3-j}}(v)|\geq 0.4|U_{3-j}|.$
Let $L:=N_{U_{3-j}}(u)\cap N_{U_{3-j}}(v)\cap U_{3-j}^{(8)}$.
By Lemma \ref{Lem4.6}, we have
$|S_{3-j}^{(8)}|\leq \varepsilon_2|U_{3-j}|$.
Therefore,
\[
\begin{aligned}
|L|\geq
|N_{U_{3-j}}(u)\cap N_{U_{3-j}}(v)|-|S_{3-j}^{(8)}|
\geq (0.4-\varepsilon_2)|U_{3-j}|
\geq \frac13|U_{3-j}|,
\end{aligned}
\]
where the last inequality holds for sufficiently small $\varepsilon_2$.

Since we are in Case 2, we have
$|U_1|\leq |U_2|\leq 100|U_1|$
and $|U_1||U_2|\geq (1-\varepsilon)h$.
Hence both $|U_1|$ and $|U_2|$ tend to infinity as $h\to\infty$.
We first choose a $t$-set $B\subseteq L$.
Since $t$ is fixed and $|L|\geq \frac13|U_{3-j}|$,
there exists a positive constant $\alpha_{s,t}>0$ such that
$\binom{|L|}{t}\geq\alpha_{s,t}|U_{3-j}|^t$.

For a fixed $t$-set $B\subseteq L$, put
$C_B:=\bigcap_{y\in B}N_{U_j}(y).$
Since $B\subseteq U_{3-j}^{(8)}$, for every $y\in B$ we have
$|N_{U_j}(y)|\geq(1-8\varepsilon_2)|U_j|$.
Hence, by the union bound,
$|C_B|\geq(1-8t\varepsilon_2)|U_j|$.
Since $s$ and $t$ are fixed, $\varepsilon_2$ is sufficiently small,
and $|U_j|\to\infty$, we have
$|C_B\setminus\{u,v\}|\geq \frac12|U_j|$
for sufficiently large $h$.
Consequently, there exists a positive constant
$\beta_{s,t}>0$ such that
\[
\binom{|C_B\setminus\{u,v\}|}{s-2}
\geq
\beta_{s,t}|U_j|^{s-2}.
\]

For every choice of $A'\subseteq C_B\setminus\{u,v\}$ with $|A'|=s-2$,
let $A:=A'\cup\{u,v\}$.
Since $B\subseteq N_{U_{3-j}}(u)\cap N_{U_{3-j}}(v)$
and $A'\subseteq C_B$, all edges between $A$ and $B$ are present in
$H$. Together with the edge $uv$, these edges form a copy of
$K_{s,t}^{+}$ in $H$, with $uv$ as the added edge.

As in the proof of Lemma \ref{Lem4.7B}, each copy is counted at most
a constant number of times depending only on $s$ and $t$.
Therefore, there exists a positive constant $c_{s,t}>0$ such that
$|\mathcal{F}(e)|\geq c_{s,t}|U_{3-j}|^t|U_j|^{s-2}.$
Since $t+1\geq s$, we have
$\frac{t-s+1}{2}\geq0.$
Moreover, in Case 2,
$\frac1{100}\leq\frac{|U_{3-j}|}{|U_j|}\leq100$.
Hence
\[
\begin{aligned}
|U_{3-j}|^t|U_j|^{s-2}
&=
|U_{3-j}|
\bigl(|U_j||U_{3-j}|\bigr)^{\frac{s+t-3}{2}}
\left(
\frac{|U_{3-j}|}{|U_j|}
\right)^{\frac{t-s+1}{2}}\\
&\geq
100^{-\frac{t-s+1}{2}}
(1-\varepsilon)^{\frac{s+t-3}{2}}
|U_{3-j}|h^{\frac{s+t-3}{2}}.
\end{aligned}
\]
Thus, for some positive constant $c'_{s,t,\varepsilon_2}$,
$
|\mathcal{F}(e)|
\geq
c'_{s,t,\varepsilon_2}
|U_{3-j}|
h^{\frac{s+t-3}{2}}.
$

Furthermore, the families $\mathcal{F}(e)$, $e\in E^j$, are pairwise
disjoint, since in every copy of $K_{s,t}^{+}$ the added edge is the
unique edge whose deletion yields a copy of $K_{s,t}$.
Therefore, if $|E^j|>\varepsilon_2|U_j|$, then
\[
\begin{aligned}
N(K_{s,t}^+,H)
\geq
\sum_{e\in E^j}|\mathcal{F}(e)|
>\varepsilon_2|U_j|
\cdot
c'_{s,t,\varepsilon_2}
|U_{3-j}|
h^{\frac{s+t-3}{2}}
\geq
2^{\frac{s+t-1}{2}}
\varepsilon_1
h^{\frac{s+t-1}{2}},
\end{aligned}
\]
where the last inequality follows from
$|U_j||U_{3-j}|\geq(1-\varepsilon)h$
and $\varepsilon_1\ll\varepsilon_2$.
This contradicts \eqref{align-002T}.
Hence, $|E^j|\leq\varepsilon_2|U_j|$.
\end{proof}

For each $i\in \{1,2\}$, set $U_i^\bullet=U_i\setminus S_i^\bullet$.
By Lemma \ref{Lem4.11C} and  Claim \ref{CLA5.10}, we know that $R\cup S_1^\bullet\cup S_2^\bullet=\varnothing$,
which implies that $V(H)=U_1^\bullet\cup U_2^\bullet$.
Let $H^\bullet$ be the graph obtained from $H$ by deleting all edges in $E^1\cup E^2$.
For brevity, we write $\rho^\bullet = \rho(H^\bullet)$.
Then $H^\bullet$ is a subgraph of $K_{U_1,U_2}$,
and hence it is bipartite.
Furthermore, applying Lemma \ref{lem2.8} (ii) to $H^\bullet$ yields $\rho^\bullet\leq \sqrt{e(H^\bullet)}\leq \sqrt{h}$.

In the following, we will prove that
 $\rho^\bullet>\sqrt{h}$.
For each edge $uv\in E^i$, we have $u,v\in U_i$.
Therefore, Claim~\ref{CLA5.9} (ii) gives
$x_ux_v
\leq(1+400\varepsilon_2)^2\frac{|U_{3-i}|}{2h}.$
Since $|E^i|\leq\varepsilon_2|U_i|$ and
$|U_1||U_2|\leq(1+\varepsilon)h$, we have
\[
|E^i||U_{3-i}|
\leq
\varepsilon_2|U_i||U_{3-i}|
=
\varepsilon_2|U_1||U_2|
\leq
(1+\varepsilon)\varepsilon_2h
\]
for each $i\in\{1,2\}$.
By the Rayleigh principle,
\begin{align*}
\rho^\bullet-\rho
&\geq
\mathbf{x}^{\top}
\bigl(A(H^\bullet)-A(H)\bigr)\mathbf{x}
\geq
-2\left(
\sum_{uv\in E^1}x_ux_v+\sum_{uv\in E^2}x_ux_v\right)\\
&\geq
-2\sum_{i=1}^{2}
|E^i|\cdot (1+400\varepsilon_2)^2\frac{|U_{3-i}|}{2h}\\
&\geq  -4(1+400\varepsilon_2)^2\cdot \frac12(1+\varepsilon)\varepsilon_2.
\end{align*}
Since  $\varepsilon\ll \varepsilon_2\ll  1$,
it follows that
$\rho^\bullet>\rho-3\varepsilon_2$.
Furthermore, by Lemma \ref{Lem4.2}, we have
$\rho^\bullet> \frac{s-2 + \sqrt{4h - (s-1)^2 + 1}}{2}-3\varepsilon_2.$
Using \eqref{equ-001} and $s\geq 3$, we have
$\rho^\bullet> \sqrt{h} + ( \frac{s}{2} - 1 - 4\varepsilon_2)>\sqrt{h}$,
which contradicts $\rho^\bullet\leq \sqrt{h}$.
Therefore, Case 2 is impossible.

Since both Case 1 and Case 2 are impossible, the desired conclusion follows.
This completes the proof of Theorem~\ref{thm1.1}.
\end{proof}

\section{Proofs of Theorem \ref{thm1.2} and Corollary \ref{cor1.1}}\label{sec6}

\begin{proof}[\textbf{Proof of Theorem \ref{thm1.2}}]
Applying Theorem~\ref{thm1.1} with $s=k+1$ and $t=k$ yields $N(K_{k+1,k}^{+}, G) = \Omega(m^k)$ for any $m$-edge graph $G$ satisfying
$\rho(G) > g_k(m)$.

Since $C_{2k+1}$ is a spanning subgraph of $K_{k+1,k}^{+}$, every copy of $K_{k+1,k}^{+}$ contains a copy of $C_{2k+1}$ on the same vertex set.
On the other hand, a fixed copy $C$ of $C_{2k+1}$ can be contained in at most
$\binom{2k+1}{k+1}\binom{k+1}{2}$ copies of $K_{k+1,k}^{+}$.
Indeed, since both graphs have exactly $2k+1$ vertices, any copy of $K_{k+1,k}^{+}$ containing $C$ must share the same vertex set as $C$.
On this fixed vertex set, a copy of $K_{k+1,k}^{+}$ is uniquely determined by partitioning the vertices into two parts of sizes $k+1$ and $k$, and choosing the extra edge within the larger part.
Thus, the number of such copies is at most
$\binom{2k+1}{k+1}\binom{k+1}{2}$.
Therefore, we have
$$N(K_{k+1,k}^{+}, G) \le \binom{2k+1}{k+1}\binom{k+1}{2} N(C_{2k+1}, G).$$
Given that $N(K_{k+1,k}^{+}, G) = \Omega(m^k)$, it immediately follows that $N(C_{2k+1}, G) = \Omega(m^k)$.

It remains to show that the bound $m^k$ is tight.
For a sufficiently large $m$, we choose integers $r$ and $q$ such that
$m = (k+1)r + \binom{k+1}{2} + q$, where $0 \le q < k+1$.
Let $H_m$ be the graph obtained from $K_{k+1} \vee \overline{K_r}$ by adding $q$ pendant edges incident to a fixed vertex of the clique $K_{k+1}$.
Clearly, $e(H_m) = m$.
Let $Q:= K_{k+1} \vee \overline{K_r}$.
The partition of the vertex set of $Q$ into the clique part and the independent part is equitable, and its corresponding quotient matrix is
\[
    \begin{pmatrix}
    k & r\\
    k+1 & 0
    \end{pmatrix}.
\]
Thus, the spectral radius of $Q$ is
$\rho(Q) = \frac{k + \sqrt{k^2 + 4(k+1)r}}{2}.$
Since $Q \subseteq H_m$, by the monotonicity of the spectral radius,
we have $\rho(H_m) \ge \rho(Q)$.
By substituting the expression for $m$ into the term $\sqrt{4m-k^2+1}$, we obtain
\begin{align*}
    \rho(Q) - \frac{k-1+\sqrt{4m-k^2+1}}{2}
    = \frac{1}{2} - o(1)
\end{align*}
as $r \to \infty$ (or equivalently, $m \to \infty$), since $k$ is fixed and $0 \le q < k+1$.
Therefore, for all sufficiently large $m$, we have
$\rho(H_m) \ge \rho(Q) > \frac{k-1+\sqrt{4m-k^2+1}}{2}.$

Finally, we estimate the number of copies of \(C_{2k+1}\) in \(H_m\). No pendant
vertex lies on a cycle, so every copy of \(C_{2k+1}\) is contained in
$Q=K_{k+1}\vee \overline{K_r}$.
In the graph \(Q\), the independent part has no internal edges.
Therefore, on any cycle, the number of vertices taken from the independent
part is at most the number of vertices taken from the clique part.
Since the clique
part has only \(k+1\) vertices and the cycle has length \(2k+1\), every copy of
\(C_{2k+1}\) uses exactly \(k\) vertices from the independent part and all \(k+1\)
vertices from the clique part.
Since the $k+1$ vertices of the clique are fixed, the number of such vertex sets is precisely $\binom{r}{k}$. For each chosen vertex set, the number of cycles of length $2k+1$ is a constant depending only on $k$. Consequently, since $r = \Theta(m)$, we have
$$N(C_{2k+1}, H_m) = \Theta_k(r^k) = \Theta_k(m^k).$$
This construction satisfies the required spectral condition while containing only $\Theta(m^k)$ copies of $C_{2k+1}$.
Hence the lower bound
\(\Omega(m^k)\) is tight up to a constant factor.
\end{proof}

\begin{proof}[\textbf{Proof of Corollary~\ref{cor1.1}}]
Applying Theorem~\ref{thm1.1} with $s=k+1$ and $t=k$, we obtain
$$
N(K_{k+1,k}^{+},G)=\Omega(m^k)
$$
for every $m$-edge graph $G$ satisfying
$\rho(G)>g_k(m).$
Since $F\subseteq K_{k+1,k}^{+}$,
every copy of $K_{k+1,k}^{+}$ in $G$ contains a copy of $F$.
Furthermore, because $F$ and $K_{k+1,k}^{+}$ have the same number of vertices, each copy of $F$ is contained in at most a constant number of copies of $K_{k+1,k}^{+}$, where the constant depends only on $F$. It follows that
$$
N(F,G)=\Omega\bigl(N(K_{k+1,k}^{+},G)\bigr)=\Omega(m^k).
$$

It remains to show that the bound is tight.
Since $C_{2k+1}\subseteq F$ and the two graphs have the same number of vertices,
each copy of $C_{2k+1}$ in $G$ can be extended to at most a constant number of copies of $F$. Consequently,
$N(F,G)=O\bigl(N(C_{2k+1},G)\bigr)$.
By the tightness assertion in Theorem~\ref{thm1.2}, there exist graphs $G$ satisfying $\rho(G)>g_k(m)$ and
$N(C_{2k+1},G)=\Theta(m^k)$.
For these graphs, we therefore have
$$
N(F,G)=O(m^k).
$$
Hence, the bound is tight up to a constant factor.
\end{proof}

\end{document}